\documentclass[11pt]{article}

\usepackage[margin=1in]{geometry}
\usepackage{amsmath,amssymb,amsthm}
\usepackage[square]{natbib}
\usepackage{mathtools}
\usepackage{stmaryrd}
\usepackage{hyperref}
\usepackage{enumitem}
\usepackage{microtype}

\hypersetup{colorlinks=true,linkcolor=blue,citecolor=blue,urlcolor=blue}
\usepackage{xcolor}
\newtheorem{definition}{Definition}[section]
\newtheorem{theorem}[definition]{Theorem}
\newtheorem{proposition}[definition]{Proposition}
\newtheorem{lemma}[definition]{Lemma}
\newtheorem{corollary}[definition]{Corollary}
\theoremstyle{remark}

\title{Models of Arbitrary Sets}
\author{Pedro Teixeira Yago\thanks{Orcid: 0000-0001-7993-4516, Classe di Lettere e Filosofia, Scuola Normale Superiore di Pisa, Italy. \texttt{pedro.tyago@outlook.com}}
\and Santiago Jockwich Martinez\thanks{Orcid: 0000-0001-6528-1972, Department of Philosophy, University of Gdansk, Poland. \texttt{santijoxi@hotmail.com}}
\and Giorgio Venturi\thanks{Orcid: 0000-0001-9089-4340, Dipartimento di Civilit\`a e Forme de Sapere, Universit\`a di Pisa, Italy. \texttt{gio.venturi@gmail.com}}}
\date{}

\begin{document}
\maketitle

\begin{abstract}
There is a growing literature on arbitrary objects as other types of mathematical objects, besides particular mathematical objects. In this paper we study arbitrary sets in the context of ZFC set theory. An arbitrary object is one that is determined by a state space across which it varies and by its instantiations. 
We compare two semantic strategies. First, we analyse the possibility of modeling arbitrary sets using direct powers of a ZFC structure. 
This yields a natural  setting, but one where  Extensionality and, crucially, the full Separation scheme fail. Second, we consider structures composed of arbitrary sets using a direct power construction evaluated in a Boolean-valued semantics. Within this setting, we develop the cumulative hierarchy of arbitrary sets, arbitrary ordinals, cardinals, and filtrations of the structure of arbitrary sets. The final section develops a combinatorial interpretation in terms of independent assignments across a fixed state space. Its connection with the iterative conception is then discussed. 
\end{abstract}

\medskip
\noindent\textbf{Keywords.} Arbitrary objects; Arbitrary sets; Boolean-valued semantics; Combinatorial conception; ZFC.\\

\section{Introduction}

Arbitrary objects, in general, are entities that assume values at a state space. The values an arbitrary object may assume compose what is usually called its \emph{scope} (or range) and determine many of its characteristic properties. A convenient mathematical representation is as follows: given a state space $\Omega$ and a scope $F$, an arbitrary $F$-object can be represented by a function $f : \Omega \to F$, with $f(i)$ the value of the object at state $i$.

A central idea about arbitrary objects, clearly expressed in \citep{Horsten2019}, is that the state space must be large enough to support \emph{general} arbitrary objects. In particular, if $F$ is the intended scope, then the state space must allow at least one arbitrary object whose scope is \emph{all of $F$}. This becomes a cardinality requirement: to have an object whose scope is the whole of $F$, one needs a state space $\Omega$ with $|\Omega| \geq |F|$ (in the metatheory).

In a recent paper, Horsten argues that Boolean-valued sets can be understood as arbitrary objects \citep{Horsten2024}. The key observation is that a Boolean-valued set $u$ determines (and is determined by) an associated function $u^\ast$ whose domain is the underlying Boolean algebra, and whose codomain is the Boolean-valued model. Thus Boolean-valued models provide a natural \emph{function-theoretic} representation of arbitrary objects. However, Boolean-valued sets cannot, in general, represent any arbitrary set of a given Boolean-valued model, since the underlying Boolean algebra need not be large enough to support a \emph{general} arbitrary set whose scope is the entire set-theoretic totality under consideration.

The aim of this paper is to explore function-based constructions, representing arbitrary sets, that \emph{do} accommodate the general-arbitrary-set requirement and that make the resulting mathematics explicit. Furthermore, we develop a combinatorial interpretation of arbitrary sets through the operator $X\mapsto X^\Omega$, where $X$ is a set of admissible values and $\Omega$ is a fixed state space. The ground hierarchy provides a stratification of the uniformly bounded arbitrary sets, while the full domain, including functions unbounded in that hierarchy, is obtained by an external function-space construction.

Our guiding choice is to work relative to a set model $\mathcal{M}$ and to take the state space to be indexed (or represented) by $M$, the domain of $\mathcal{M}$---similarly to the formalization of the arbitrary natural numbers in \citep{Horsten2019}. This guarantees that there is a canonical ``general'' arbitrary set, namely the identity function $\mathrm{id}_\mathcal{M} : M \to M; x \mapsto x$, whose scope is every set of $\mathcal{M}$. The model admits a natural semantics by evaluating sentences in the algebra of the power set of the state space associated to the arbitrary sets, which plays the same structural role that a Boolean algebra plays in standard Boolean-valued models. As such, the resulting underlying logic of sentences about arbitrary sets is supervaluationistic, aligning with Fine's view in \citep{Fine1985}.

The paper develops two related but importantly different semantic strategies.

\begin{itemize}[itemsep=0.4em]
\item In Section \ref{sec:one} we begin with a direct and
intensional proposal. We take all total functions $M \to M$
as our domain, representing each ground set $a \in M$ by
the constant function $i \mapsto a$, and interpret membership
as in the classical direct power:

\[
\text{for}\ f, g \in \prescript{M}{}{M}, f \in^{\mathcal{M}^*} g \quad \Longleftrightarrow \quad \forall i \in M\ (f(i) \in g(i)).
\]

\noindent
This clause captures the thought that $f$ is a member of $g$ only if the value of $f$ is a member of the value of $g$ in every state. The resulting structure validates several basic axioms, but it fails Extensionality and Pairing, and, more decisively, the full Separation scheme. The failure clarifies why the construction does not yield a full ZFC environment for arbitrary sets.

\item Section \ref{sec:two} contains the main model. We take the direct power $\mathfrak{M} \coloneqq \mathcal{M}^M$ and interpret \emph{both} membership and identity using a Boolean-valued semantics with truth values in $\mathcal{P}(M)$. Concretely, each formula $\varphi$ is assigned the set of states $i \in M$ in which $\varphi$ holds pointwise in $\mathcal{M}$; we declare $\varphi$ to be true in $\mathfrak{M}$ exactly when its truth value is all of $M$. A fundamental pointwise evaluation lemma then yields a transparent transfer of ZFC from $\mathcal{M}$ to $\mathfrak{M}$. Within this setting we develop the basic internal notions of \emph{ordinal} and \emph{cardinal}: arbitrary ordinals are precisely functions whose values are ordinals of $\mathcal{M}$, and arbitrary cardinals are likewise pointwise cardinals of $\mathcal{M}$. Classical linearity and minimality remain valid internally,
while global comparisons exhibit state-indexed behaviour: ZFC still yields trichotomy of cardinality as a disjunction, but distinct disjuncts may be witnessed at different states, and coordinatewise ``mixing'' produces many intermediate cardinals between two globally comparable endpoints.

\item In Section \ref{sec:three} we place these phenomena in a familiar model-theoretic frame. Choosing an ultrafilter on $M$ collapses $\mathcal{P}(M)$-truth values to two-valued truth and yields the corresponding ultrapower quotient; this explains why a mixture of finitely many fixed alternatives
collapses to one alternative in a two-valued reduction. We formalize this contrast using \emph{comparison profiles} for arbitrary ordinals and cardinals and show how profiles are collapsed by ultrafilter quotients. We then introduce a filtered subdomain (a ``cut'') that rigidifies objects below a chosen threshold cardinal while preserving variability above it, thereby controlling where intermediate-cardinal phenomena occur.


\item Finally, Section \ref{sec:four} develops a combinatorial interpretation of the functional models through the operator $X\mapsto X^\Omega$. This generalizes the assignment picture associated with $\mathcal{P}(X)\cong 2^X$: binary choices of membership give way to assignments of values across a fixed state space. We explain how the ground hierarchy stratifies the uniformly bounded arbitrary sets, distinguish this stratification from the construction of the full function domain, and compare our proposal with Button's iterative account. The filtered constructions of Section \ref{sec:three} provide a further way of distinguishing degrees of arbitrariness.
\end{itemize}

\section{Arbitrary sets}\label{sec:one}

There is a view according to which, besides individual objects, such as the Moon, a chair, or even the number two, there are also \emph{arbitrary} objects. They are the referents of the subjects of commonly used phrases such as ``a natural number is either even or odd,'' ``an arbitrary subset of an ordinal is well-ordered by membership,'' or ``an arbitrary man is human.''

Arbitrary objects have two distinguishing features. First, they may take other objects as values in different states \citep{Horsten2019}. For example, an arbitrary natural number may take the value $2$ in one state, $179$ in another, and $10^{12345}$ in yet another. Second, they behave in a distinctive way: their properties are determined by those shared by all the objects in the collection of values they may take, that is, by their \emph{range} \citep{Fine1985}. Thus, an arbitrary natural number whose possible values are all greater than $1$ has a unique factorization into primes, since every number in its range has that property. For a general arbitrary natural number, however, divisibility by $3$ does not obtain, since some of its possible values are not divisible by $3$. Its negation does not obtain either, since other possible values are divisible by $3$.

Given a collection $F$ of objects, an arbitrary $F$ is an object that assumes values in $F$. It is therefore natural to represent an arbitrary $F$ by a function from a given state space $\Omega$ into $F$. The existence of a general arbitrary $F$, that is, one whose range is all of $F$, depends on the cardinality of the state space: since each state determines a single value, the state space must be at least as large as $F$ if every object in $F$ is to be assumed as a value at least once. Indeed, \citep{Horsten2019} takes arbitrary natural numbers, for example, to be represented by the functions in $\prescript{\mathbb{N}}{}{\mathbb{N}}$, the set of all functions with the natural numbers as both domain and codomain. We shall follow this realization and take each function from a given collection $F$ into itself to represent a possible arbitrary $F$, with the collection $\prescript{F}{}{F}$ exhausting the representations of arbitrary $F$’s.

At this point, it is worth noting that arbitrary objects are not reduced to functions. Indeed, functions as described above merely represent the relation between an arbitrary object, the way it assumes its possible values, and how the values of different arbitrary objects interact. Nevertheless, from now onward, given a set-theoretic collection $M$, we shall speak of elements of $\prescript{M}{}{M}$ as if they were arbitrary sets, for the sake of brevity.

The aforementioned behavior of arbitrary objects is guided by what has become known, following Fine, as the \emph{Principle of Generic Attribution} (henceforth, \emph{PGA}). The principle says that an arbitrary $F$ has exactly those properties which are common to all the objects in its range. It may be better understood by reasoning about how the different values of arbitrary $F$'s behave: for example, an arbitrary set has the empty set as one of its subsets because in any state, the value it assumes is a superset of the empty set; likewise, an arbitrary number $a$ is double an arbitrary number $b$ when in every state of their shared state space, the value $a$ assumes is two times the value $b$ assumes. More generally, let $a_1, ..., a_n$ be arbitrary objects with the same state space, $\Omega$ be their state space, and $\mathrm{v}(a_i, w)$ be the value $a_i$ assumes in the state $w$. Then for any formula $\varphi$,

\begin{center}
$\varphi(a_1, ..., a_n)\ \Longleftrightarrow\ \forall w \in \Omega \varphi\big{(} \mathrm{v}(a_1, w), ..., \mathrm{v}(a_n, w) \big{)}$.
\end{center}

\noindent
For example, let $D(x, y)$ express that $x$ is divisible by $y$. Then, for an arbitrary number $a$, $D(a, 3)$ obtains iff for each state $w$ of the state space of $a$, $\mathrm{v}(a, w)$ is divisible by $3$---that is, $D \big{(} \mathrm{v}(a, w), 3 \big{)}$ obtains.

\begin{definition}\label{def:direct-power}
Let $\mathcal{A}$ be a first-order structure with domain $A$, and let $I$ be a nonempty index set. The direct power $\mathcal{A}^I = \prod_{i \in I}\mathcal{A}$ has as its domain the set
\[
A^I = \prescript{I}{}{A} = \{f \mid f : I \to A\}
\]
of all total functions from $I$ to $A$. For each $n$-ary relation symbol $R$, its interpretation is
\[
R^{\mathcal{A}^I}(f_1,\ldots,f_n)
\quad\Longleftrightarrow\quad
\forall i \in I\;
R^{\mathcal{A}}(f_1(i),\ldots,f_n(i)).
\]
Function symbols, when present, are interpreted coordinatewise, and each constant symbol $c$ is interpreted by the constant function $i \mapsto c^{\mathcal{A}}$. Equality is ordinary equality of functions: $f=g$ exactly when $f(i)=g(i)$ for every $i \in I$.
\end{definition}

We note, in the \emph{classical} direct power, compound formulas are evaluated using the usual two-valued first-order semantics, with quantifiers ranging over $A^I$. The coordinatewise characterization of atomic truth does not, in general, extend to arbitrary formulas.

Naturally, if $F$ carries a structure, then the representation of arbitrary $F$’s by functions in $\prescript{F}{}{F}$ allows that structure to be translated into the truth conditions of $F^F$, understood as a direct power of $F$. Of course, the faithfulness of formula evaluation in a direct power to the PGA depends on the choice of semantics. A classical direct power will not satisfy the PGA in its broadest scope. To see this, let $O(x)$ express that $x$ is odd, and $E(x)$ that $x$ is even. According to the PGA, a general arbitrary natural number $a$ will be such that $E(a) \vee O(a)$ holds, but neither $E(a)$, $\neg E(a)$, $O(a)$, nor $\neg O(a)$ holds. In a direct power, however, the truth of $E(a) \vee O(a)$ is compositional, and so reduces to the truth of one of its disjuncts. Hence the direct power does not satisfy $E(a) \vee O(a)$. If one wants to adopt the PGA in full generality, the semantics associated with structures of arbitrary objects must therefore be non-classical.\footnote{At least in a unilateral setting. As \citep{Yago2026} argues, in a bilateral setting---where asserting a sentence and rejecting it are not reducible one to another---, it may be possible for the PGA and classical logic to coexist. A bilateral analysis of the present subject is however outside the scope of this paper.} \citep{Horsten2019}, for example, drops the PGA in favor of classical logic---or, rather, takes the PGA to hold only as broadly as classical logic permits. On the other hand, \citep{Santambrogio} adopts intuitionistic logic, whereas \citep{Fine1985} takes supervaluationist logic to govern arbitrary objects.

To showcase the differences the choice of semantics makes, in this section we shall consider how much of ZF(C) a classical direct power can retain. We shall then contrast it, in the next section, with the preservation of a Boolean-valued direct power.

We work with \emph{set models} rather than proper classes, so that all domains are genuine sets in the metatheory. Let us thus fix a transitive set model $\mathcal{M} \models \mathrm{ZF}$ (or $\mathrm{ZFC}$ if desired) in the metatheory, and let the index set be $M$.

Define $\mathcal{M}^*$ as the direct power $\mathcal{M}^M$. As mentioned, elements of $\mathcal{M}^*$ are intended to represent arbitrary objects taking values in $\mathcal{M}$ in each state of their shared state space, which we take to be indexed by $M$.

In the direct power, the membership relation is interpreted as

\[
f \in^{\mathcal{M}^*} g \quad \Longleftrightarrow \quad \forall i \in M \big{(} f(i) \in g(i) \big{)}.
\]

\noindent
We also define $f \subseteq^{\mathcal{M}^*} g$ to abbreviate $\forall x (x \in^{\mathcal{M}^*} f \to x \in^{\mathcal{M}^*} g)$.

We assume Choice in the metatheory throughout this section. This does not require Choice to hold in the ground model $\mathcal{M}$, which may still be a model of ZF. For $a \in M$, write $f_a$ for the constant function $f_a(i)=a$ for every $i \in M$.



\begin{lemma}\label{lem:pointwise-subset}
For $f,g\in \mathcal{M}^*$,
\[
f\subseteq^{\mathcal{M}^*}g
\quad\Longleftrightarrow\quad
\bigl(\exists i\in M\;f(i)=\varnothing\bigr)
\ \vee\
\bigl(\forall i\in M\;f(i)\subseteq g(i)\bigr).
\]
In particular, if $f$ has a member in $\mathcal{M}^*$, then
$f\subseteq^{\mathcal{M}^*}g$ is equivalent to
$f(i)\subseteq g(i)$ for every $i\in M$.
\end{lemma}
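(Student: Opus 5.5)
First I will characterize when $f$ has a member in $\mathcal{M}^*$: $f$ has a member iff $f(i)\neq\varnothing$ for every $i\in M$. If $h\in^{\mathcal{M}^*} f$, then $h(i)\in f(i)$ for all $i$, so no $f(i)$ is empty. Conversely, if every $f(i)$ is nonempty, Choice in the metatheory gives a choice function $h$ with $h(i)\in f(i)$ for each $i$. Since $\mathcal{M}$ is transitive, $h(i)\in M$, so $h\in M^M$ and $h\in^{\mathcal{M}^*} f$. This is the only place Choice is used.

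For the direction ($\Leftarrow$), I split into the two disjuncts. If $f(i_0)=\varnothing$ for some $i_0$, then by the first step $f$ has no members, so $f\subseteq^{\mathcal{M}^*}g$ holds vacuously. If $f(i)\subseteq g(i)$ for all $i$ and $x\in^{\mathcal{M}^*}f$, then $x(i)\in f(i)\subseteq g(i)$ for each $i$, so $x\in^{\mathcal{M}^*}g$.

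For the direction ($\Rightarrow$), I assume $f\subseteq^{\mathcal{M}^*}g$ and that no $f(i)$ is empty, and show $f(i)\subseteq g(i)$ for every $i$. Suppose not, so there are $i_0$ and $a\in f(i_0)\setminus g(i_0)$. I build a witness $x$ with $x(i_0)=a$ and, for $i\neq i_0$, $x(i)$ chosen in $f(i)$, which is nonempty, again using metatheoretic Choice. Then $x\in M^M$ and $x\in^{\mathcal{M}^*}f$, but $x(i_0)=a\notin g(i_0)$, so $x\notin^{\mathcal{M}^*}g$, a contradiction.

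The "in particular" clause then follows: if $f$ has a member, the first disjunct fails by the first step, leaving the pointwise condition. The main point needing care is the patching construction in the ($\Rightarrow$) direction. Its key feature is that $\in^{\mathcal{M}^*}$ is checked coordinatewise with no coherence condition across states, so one bad coordinate already defeats membership. The argument also uses transitivity of $\mathcal{M}$, so that the chosen values lie in $M$ and the patched function belongs to the domain.
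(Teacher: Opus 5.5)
Your proof is correct and follows essentially the paper's route. The paper likewise handles ($\Leftarrow$) by vacuity or coordinatewise inclusion, and proves ($\Rightarrow$) by fixing one choice function $u\in^{\mathcal{M}^*}f$ and patching a single coordinate $i$ with an arbitrary $m\in f(i)$; you do the same patching by contradiction rather than directly. One small slip: you say Choice is used only in the first step, but then invoke it again in the ($\Rightarrow$) direction. Patching the member $h$ supplied by your first step, as the paper patches $u$, would make that remark accurate.
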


\begin{proof}
If $f(i_0)=\varnothing$ for some $i_0\in M$, then $f$ has no member in $\mathcal{M}^*$, and $f\subseteq^{\mathcal{M}^*}g$ holds vacuously. If $f(i)\subseteq g(i)$ for all $i$, the definition of membership directly gives $f\subseteq^{\mathcal{M}^*}g$.

Conversely, suppose $f\subseteq^{\mathcal{M}^*}g$ and every $f(i)$ is nonempty. By Choice in the metatheory, choose $u\in M^M$ with $u(i)\in f(i)$ for every $i$. Here transitivity of $M$ ensures that each chosen value belongs to $M$. Fix $i\in M$ and $m\in f(i)$, and define
\[
u_{i,m}(j)=
\begin{cases}
m,&j=i,\\
u(j),&j\ne i.
\end{cases}
\]
Then $u_{i,m}\in^{\mathcal{M}^*}f$, so $u_{i,m}\in^{\mathcal{M}^*}g$ and $m\in g(i)$. Since $i$ and $m$ were arbitrary, all the coordinatewise inclusions hold. For the last assertion, a member of $f$ supplies $u$ directly, without any
further use of Choice.
\end{proof}

Given our model of arbitrary sets of $\mathcal{M}$ as the classical direct power, the question of what axioms of ZF(C) are satisfied by $\mathcal{M}^*$ may be answered by well-known results of preservation theory. The class of Horn sentences, for example, is inductively defined and preserved by direct powers \citep{Horn}, but does not exhaust the preserved fragment of first-order theory. In fact, the exact fragment preserved by direct products is offered in \citep{Weinstein}, but its syntactical characterization is not straightforward. Therefore, for the sake of clarity, we shall independently prove the preservation, or lack thereof, of each axiom.




\begin{proposition}
$\mathcal{M}^* \models \mathrm{Empty}\ \mathrm{Set}$.
\end{proposition}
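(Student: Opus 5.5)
The natural witness is the constant function $f_\varnothing$, which makes sense because $\varnothing\in M$ by transitivity of $\mathcal{M}\models\mathrm{ZF}$. The Empty Set axiom reads $\exists x\,\forall y\,\neg(y\in x)$. Since $\mathcal{M}^*$ is evaluated classically, with quantifiers ranging over $M^M$, it suffices to show that no $h\in M^M$ satisfies $h\in^{\mathcal{M}^*}f_\varnothing$.

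Unfolding the membership clause of the direct power, $h\in^{\mathcal{M}^*}f_\varnothing$ would mean $h(i)\in f_\varnothing(i)=\varnothing$ for every $i\in M$. Because $M$ is nonempty (it contains $\varnothing$), we can fix some $i_0\in M$. At that coordinate the condition would give $h(i_0)\in\varnothing$, which is impossible. Hence $\forall y\,\neg(y\in x)$ holds in $\mathcal{M}^*$ with $x:=f_\varnothing$, and the existential is witnessed.

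It is worth noting, via Lemma~\ref{lem:pointwise-subset}, that every $f$ with some $f(i)=\varnothing$ is likewise memberless. So $\mathcal{M}^*$ has many empty objects, which foreshadows the failure of Extensionality. This does not affect the existence claim being proved.

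There is no real obstacle here. The only point needing care is that the index set $M$ is nonempty, since an empty index set would make the membership condition vacuous. Nonemptiness is guaranteed both by the definition of direct power and by $\varnothing\in M$.
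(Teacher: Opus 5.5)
Your proof is correct and follows essentially the same approach as the paper: both take the constant function $f_\varnothing$ as the witness and observe that no $h\in M^M$ can satisfy $h(i)\in\varnothing$ at any coordinate. Your explicit remark that $M$ must be nonempty (guaranteed by $\varnothing\in M$) is a small point of care that the paper leaves implicit.
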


\begin{proof}
Take $f_\varnothing \in M^M$. For any $g \in M^M$ and any $i \in M$,
$g(i) \not\in \varnothing=f_\varnothing(i)$. Thus
$g \not\in^{\mathcal{M}^*} f_\varnothing$ for every $g \in M^M$,
so $f_\varnothing$ witnesses the Empty Set axiom.
\end{proof}

\begin{proposition}
$\mathcal{M}^* \models \mathrm{Infinity}$.
\end{proposition}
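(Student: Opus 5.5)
The plan is to use the constant function $f_\omega$ as the witness and to verify Infinity coordinatewise, using only that $\mathcal{M}\models\mathrm{ZF}$. Since $\mathcal{M}$ is transitive, $\omega\in M$, so $f_\omega\in M^M$. I will read Infinity in its usual form: there is an $x$ that contains the empty set and is closed under the successor $y\mapsto y\cup\{y\}$. Following Definition~\ref{def:direct-power}, I treat $\varnothing$ and the successor as operations interpreted coordinatewise, exactly as the definition treats function symbols. So the empty set is $f_\varnothing$, as in the preceding proposition, and the successor of $y\in M^M$ is $S(y)(i)=y(i)\cup\{y(i)\}$.

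The verification then has three steps.

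\begin{enumerate}
\item \emph{$f_\varnothing\in^{\mathcal{M}^*}f_\omega$.} For every $i\in M$ we have $\varnothing\in\omega$.
\item \emph{Closure under successor.} Take any $y$ with $y\in^{\mathcal{M}^*}f_\omega$, so $y(i)\in\omega$ for all $i$. Then $S(y)(i)=y(i)+1\in\omega$ for all $i$, hence $S(y)\in^{\mathcal{M}^*}f_\omega$. Also $S(y)\in M^M$, because $M$ is closed under successor.
\item \emph{$S(y)$ behaves like a successor in the easy direction.} We have $y\in^{\mathcal{M}^*}S(y)$, and $y\subseteq^{\mathcal{M}^*}S(y)$ follows from Lemma~\ref{lem:pointwise-subset}. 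So every member of $y$, and $y$ itself, is a member of $S(y)$.
\end{enumerate}

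The main obstacle is the converse direction if the successor clause is instead expanded into the primitive biconditional $\forall z\,(z\in s\leftrightarrow z\in y\vee z=y)$. Any $s$ with $y\in^{\mathcal{M}^*}s$ must contain both $y(i)$ and the members of $y(i)$ at each coordinate. Its members in $\mathcal{M}^*$ then include \emph{mixed} functions $z$. Such a $z$ equals $y(i)$ at some states and lies in $y(i)$ at others, so it satisfies neither $z\in^{\mathcal{M}^*}y$ nor $z=y$. For example, with $y=f_1$, take $z$ equal to $0$ at one state and $1$ elsewhere.

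For this reason I will state explicitly that the successor is the coordinatewise operation, in line with how function symbols are interpreted in the direct power. With that reading, the three steps above establish the axiom. I will also flag that the fully unfolded biconditional form is \emph{not} preserved, by the mixing counterexample. This parallels the failure of Extensionality and Separation, which reflects the same coordinatewise mixing phenomenon.
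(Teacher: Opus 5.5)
Your witness $f_\omega$ and steps 1--2 are exactly the paper's proof. The paper checks $f_\varnothing\in^{\mathcal{M}^*}f_\omega$ and that the coordinatewise successor $s(i)=g(i)\cup\{g(i)\}$ of each $g\in^{\mathcal{M}^*}f_\omega$ lies in $f_\omega$, tacitly using the same coordinatewise reading of successor. Your caveat is correct and goes beyond the paper: for $g=f_1$ that $s$ is $f_2$, which has $2^{|M|}$ members rather than just $f_\varnothing$ and $f_1$; your mixing argument shows that no $y$ with a member in $\mathcal{M}^*$ has an internal successor, so the fully unfolded $\mathcal{L}_\in$ sentence is in fact false in $\mathcal{M}^*$ (the successor of an empty member of $x$ already has a member); and the weaker form $\exists s\,(s\in x\wedge y\in s\wedge y\subseteq s)$ from your step 3 is a genuine $\mathcal{L}_\in$ formulation that $f_\omega$ does witness.
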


\begin{proof}
Let $\omega \in M$ be the natural numbers of $\mathcal{M}$. We show $f_\omega$ is $\in^{\mathcal{M}^*}$-inductive. First, $f_\varnothing \in^{\mathcal{M}^*} f_\omega$ since $\varnothing \in \omega$ in $\mathcal{M}$. Next, suppose $g \in^{\mathcal{M}^*} f_\omega$. Define $s \in \mathcal{M}^*$ by $s(i) \coloneqq g(i) \cup \{g(i)\}$. Then $s(i) \in \omega$ for every $i$ by Infinity in $\mathcal{M}$, hence $s \in^{\mathcal{M}^*} f_\omega$.
\end{proof}

\begin{proposition}\label{prop:pairing}
$\mathcal{M}^* \not\models \mathrm{Pairing}$.
\end{proposition}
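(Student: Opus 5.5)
We need to show that Pairing fails in $\mathcal{M}^*$. Pairing says: for all $x,y$ there is $z$ with $x\in z$ and $y\in z$ (or the stronger version where the members of $z$ are exactly $x$ and $y$). Membership in $\mathcal{M}^*$ is pointwise, so $x\in^{\mathcal{M}^*} z$ and $y\in^{\mathcal{M}^*} z$ just require $x(i),y(i)\in z(i)$ for all $i$. The weak form is therefore satisfied by $z(i)=\{x(i),y(i)\}$, and the refutation must target the exact-pair form $\forall x\forall y\exists z\forall w\,(w\in z\leftrightarrow w=x\vee w=y)$. Equality here is ordinary equality of functions.

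I will take $x=f_\varnothing$ and $y=f_{\{\varnothing\}}$, and suppose some $z$ satisfies
\[
\forall w\in M^M\,\bigl(w\in^{\mathcal{M}^*} z\leftrightarrow w=f_\varnothing\vee w=f_{\{\varnothing\}}\bigr).
\]
Since $f_\varnothing\in^{\mathcal{M}^*} z$ and $f_{\{\varnothing\}}\in^{\mathcal{M}^*} z$, we get $\varnothing,\{\varnothing\}\in z(i)$ for every $i\in M$.

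Now mix the two members. Fix two distinct states $i_0\neq i_1$ in $M$; these exist because $M$ is infinite. Define $w(i_0)=\varnothing$ and $w(j)=\{\varnothing\}$ for $j\neq i_0$. Then $w(j)\in z(j)$ for every $j$, so $w\in^{\mathcal{M}^*} z$. But $w\neq f_\varnothing$, since the two differ at $i_1$, and $w\neq f_{\{\varnothing\}}$, since they differ at $i_0$. This contradicts the assumption on $z$, so no $z$ can contain exactly these two elements, and Pairing fails.

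The main obstacle is fixing which formulation of Pairing the paper intends. If the intended axiom is the weak form, it actually holds, and the statement is only true for the exact-pair formulation. I would make this explicit: observe that the weak version holds via $z(i)=\{x(i),y(i)\}$, and that the failure concerns the pair set with exactly two members. I would also remark that the same mixing argument breaks Extensionality-based uniqueness of pairs.
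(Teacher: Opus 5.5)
Your proof is correct and follows the paper's argument. Both put two constant functions into a putative pair $z$ and exhibit a non-constant mixture of their values. Pointwise membership forces that mixture into $z$, yet it equals neither constant function. The paper uses $f_\varnothing$ and $f_\omega$ and only asserts that such a mixture $m$ exists, whereas you use $f_{\{\varnothing\}}$ and build the mixture explicitly from two distinct states. Your distinction between the exact-pair form and the weak form also matches the paper, which proves Weak Pairing holds in a separate proposition.
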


\begin{proof}
Suppose $f_\varnothing, f_\omega \in^{\mathcal{M}^*} h$. Then, there is $m \in \mathcal{M}^*$ with $f_\varnothing \neq m \neq f_\omega$ such that for each $i \in M$, $m(i) \in \{\varnothing, \omega\}$. But, by the definition of membership in $\mathcal{M}^*$, we have $m \in^{\mathcal{M}^*} h$. Thus, there is no set whose sole members are $f_\varnothing$ and $f_\omega$.
\end{proof}

\begin{proposition}\label{prop:pairing}
$\mathcal{M}^* \models \mathrm{Weak}\ \mathrm{Pairing}$.
\end{proposition}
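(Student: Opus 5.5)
Weak Pairing says: for any $f,g$ there is $h$ with $f \in h$ and $g \in h$ (no claim that these are the only members). So the plan is to give an explicit witness built pointwise from $\mathcal{M}$.

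Given $f,g \in \mathcal{M}^*$, define $h \colon M \to M$ by
\[
h(i) \coloneqq \{f(i), g(i)\}.
\]
Each $h(i)$ belongs to $M$, since Pairing holds in $\mathcal{M}$ and $\mathcal{M}$ is transitive, so the pair computed in $\mathcal{M}$ is the true pair. Hence $h \in M^M$, i.e.\ $h \in \mathcal{M}^*$. No choice is needed, because $h$ is defined uniformly from $f$ and $g$.

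Now check membership in the direct power. For every $i \in M$ we have $f(i) \in h(i)$ and $g(i) \in h(i)$. By the definition of $\in^{\mathcal{M}^*}$ this gives $f \in^{\mathcal{M}^*} h$ and $g \in^{\mathcal{M}^*} h$, so $h$ witnesses Weak Pairing.

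There is no real obstacle here. The only point to stress is the contrast with Proposition~\ref{prop:pairing}. The witness $h$ will in general contain extra members, namely the ``mixtures'' $m$ with $m(i)\in\{f(i),g(i)\}$. This does not matter, because Weak Pairing only demands that $f$ and $g$ be members. It is exactly why Weak Pairing survives while full Pairing fails.
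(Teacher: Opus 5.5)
Your proposal is correct and is essentially the paper's proof: the witness $h(i)=\{f(i),g(i)\}$ gives $f\in^{\mathcal{M}^*}h$ and $g\in^{\mathcal{M}^*}h$ coordinatewise. Your remarks that $h(i)\in M$ by Pairing and transitivity of $\mathcal{M}$, and that no choice is needed, are accurate details the paper leaves implicit.
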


\begin{proof}
Given $f, g \in \mathcal{M}^*$, define $h$ by $h(i) = \{f(i), g(i)\}$. Then $f(i) \in h(i)$ and $g(i) \in h(i)$ for all $i$.
\end{proof}

\begin{proposition}\label{prop:union}
$\mathcal{M}^* \models \mathrm{Union}$.
\end{proposition}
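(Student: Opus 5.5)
Given $f\in\mathcal{M}^*$, I will take the pointwise union $h(i)\coloneqq\bigcup f(i)$ as the witness. This is total and lands in $M$, because $\mathcal{M}\models\mathrm{Union}$ and $f(i)\in M$. Since Pairing fails, I read the Union axiom in its weak form, as the paper does when it asserts Union for $\mathcal{M}^*$: there is an $h$ such that $\forall y\,\forall x\,\bigl(y\in^{\mathcal{M}^*}x \wedge x\in^{\mathcal{M}^*} f \to y\in^{\mathcal{M}^*} h\bigr)$. So the task is to show that every member of a member of $f$ is a member of $h$.

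The verification unwinds the definition of membership coordinatewise. Suppose $y\in^{\mathcal{M}^*}x$ and $x\in^{\mathcal{M}^*}f$. Then for every $i\in M$ we have $y(i)\in x(i)$ and $x(i)\in f(i)$, so $y(i)\in\bigcup f(i)=h(i)$. Hence $y\in^{\mathcal{M}^*}h$. No Choice and no appeal to Lemma~\ref{lem:pointwise-subset} is needed.

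The only delicate point is which formulation of Union is being claimed. The strong form, in which $h$ has exactly the members of members of $f$, should fail for the same mixing reason as Pairing. For example, take $f=f_{\{\varnothing,\omega\}}$: then $f_\varnothing$ and $f_\omega$ are members of $f$, and so is any $m$ with $m(i)\in\{\varnothing,\omega\}$. Conversely, I expect that any $y$ with $y(i)\in\bigcup f(i)$ for all $i$ can be reached through a suitable $x$ with $x(i)\in f(i)$, chosen coordinatewise using Choice in the metatheory. That would mean $h$ is in fact the exact union when all the $f(i)$ are nonempty. I would mention this remark after the proof, but I would present the weak-form argument above as the proof of the proposition.
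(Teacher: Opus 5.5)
Your coordinatewise verification is correct. However, it proves only the weak, upper-bound form of Union, which is just the ($\Leftarrow$) half of what the paper establishes. The paper's Union is the exact one: it deliberately separates Pairing, which fails, from Weak Pairing, which holds. With $u(i)=\bigcup f(i)$, its proof shows that for every $x\in M^M$, $x\in^{\mathcal{M}^*}u$ iff $\exists t\,(t\in^{\mathcal{M}^*}f\wedge x\in^{\mathcal{M}^*}t)$. Your premise that the paper reads Union in its weak form is therefore mistaken. Because full Separation fails in $\mathcal{M}^*$, the exact form does not follow automatically from the weak one. The reverse inclusion, that every member of $h$ is a member of some member of $f$, needs its own argument, and that is the missing step.

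Your claim that the strong form should fail ``for the same mixing reason as Pairing'' is false, and your own example shows this. For $f=f_{\{\varnothing,\omega\}}$ you get $h=f_\omega$, and every $y$ with $y(i)\in\omega$ for all $i$ is a member of $f_\omega$, which is a member of $f$. The missing direction is exactly what you sketch at the end, and it is the paper's argument. Suppose $y\in^{\mathcal{M}^*}h$. For each $i$, use Choice in the metatheory to pick $t_i\in f(i)$ with $y(i)\in t_i$, and set $t(i)=t_i$. Then $t\in M^M$ by transitivity of $M$, $t\in^{\mathcal{M}^*}f$, and $y\in^{\mathcal{M}^*}t$. No nonemptiness hypothesis is needed, since $y\in^{\mathcal{M}^*}h$ already forces every $\bigcup f(i)$, and hence every $f(i)$, to be nonempty. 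Mixing does no harm here because the witness $t$ may itself be a mixture. The collection of members of members of $f$ is closed under coordinatewise mixing, whereas $\{f_\varnothing,f_\omega\}$ is not. This direction, not the one you wrote, is where Choice enters, so it belongs in the proof rather than in a hedged remark.
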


\begin{proof}
Fix $f \in \mathcal{M}^*$ and define $u$ by $u(i) \coloneqq \bigcup f(i)$. Union in $\mathcal{M}$ gives $u(i) \in M$ for every $i \in M$, so $u \in M^M$. We show that, for every $x \in M^M$,
\[
x \in^{\mathcal{M}^*} u
\quad\Longleftrightarrow\quad
\exists t\bigl(t \in^{\mathcal{M}^*} f
\wedge x \in^{\mathcal{M}^*} t\bigr).
\]
($\Leftarrow$) If $t \in^{\mathcal{M}^*} f$ and $x \in^{\mathcal{M}^*} t$, then $t(i) \in f(i)$ and $x(i) \in t(i)$ for every $i \in M$. Hence $x(i) \in \bigcup f(i)=u(i)$ for every $i$, so $x \in^{\mathcal{M}^*} u$.

($\Rightarrow$) If $x \in^{\mathcal{M}^*} u$, then for each $i \in M$ there is $t_i \in f(i)$ with $x(i) \in t_i$. Using Choice in the metatheory, define $t \in M^M$ by $t(i)=t_i$. Then $t \in^{\mathcal{M}^*} f$ and $x \in^{\mathcal{M}^*} t$.
\end{proof}


\begin{proposition}\label{prop:powerset}
$\mathcal{M}^* \not\models \mathrm{Powerset}$.
\end{proposition}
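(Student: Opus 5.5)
The plan is to take the simplest candidate, $f_\varnothing$, and show that no $p\in M^M$ can serve as its power set in $\mathcal{M}^*$. The Powerset axiom would require some $p$ such that, for every $x\in M^M$,
\[
x\in^{\mathcal{M}^*}p \quad\Longleftrightarrow\quad x\subseteq^{\mathcal{M}^*}f_\varnothing .
\]
Only the direction from right to left is needed to reach a contradiction.

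First I will use Lemma~\ref{lem:pointwise-subset} to describe the subsets of $f_\varnothing$. By the lemma, $x\subseteq^{\mathcal{M}^*}f_\varnothing$ holds iff either $x(i)=\varnothing$ for some $i\in M$, or $x(i)\subseteq\varnothing$ for all $i$. The second case is contained in the first, since $M\neq\varnothing$. So every $x$ with at least one empty coordinate is a subset of $f_\varnothing$, whatever its other coordinates are.

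Now fix a putative $p$ and any coordinate $j\in M$. Because $\mathcal{M}$ is a transitive model of ZF, it contains $\varnothing$ and $\{\varnothing\}$, so $|M|\ge 2$ and we can pick $i_0\in M$ with $i_0\neq j$. Define $x\in M^M$ by $x(i_0)=\varnothing$, $x(j)=p(j)$, and $x(k)=\varnothing$ elsewhere. This $x$ has an empty coordinate, so $x\subseteq^{\mathcal{M}^*}f_\varnothing$. Hence $x\in^{\mathcal{M}^*}p$, which means $x(k)\in p(k)$ for every $k$. At $k=j$ this gives $p(j)\in p(j)$, contradicting Foundation. Foundation is available in the metatheory, and also in $\mathcal{M}$, since $\in$ is the true membership relation on the transitive set $M$. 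So no such $p$ exists, and Powerset fails at $f_\varnothing$.

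There is no real obstacle. The only points needing care are choosing the witness coordinate $i_0\neq j$, which needs $|M|\ge2$, and noting that the Lemma's vacuous case is exactly what makes the class of subsets of $f_\varnothing$ too large to be a pointwise member-set. No use of Choice is needed in this argument.
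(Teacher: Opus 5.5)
Your proof is correct and is essentially the paper's argument: the paper also builds $x$ with $x(i_0)=p(i_0)$ and $\varnothing$ at every other coordinate, notes that the empty coordinate makes $x\subseteq^{\mathcal{M}^*}f$ hold vacuously, and derives $p(i_0)\in p(i_0)$, contradicting Foundation. The differences are cosmetic: the paper runs the argument for an arbitrary $f$ rather than only $f_\varnothing$, and it observes directly that $x$ has no members instead of citing Lemma~\ref{lem:pointwise-subset}.
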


\begin{proof}
Fix $f\in M^M$. Suppose there were $p\in M^M$ such that
\[
\forall x\in M^M\;
\bigl(x\subseteq^{\mathcal{M}^*}f
\longrightarrow x\in^{\mathcal{M}^*}p\bigr).
\]
Since $M$ has at least two elements, choose distinct $i_0,i_1\in M$. Define $x\in M^M$ by
\[
x(i)=
\begin{cases}
p(i_0), & i=i_0,\\
\varnothing, & i\ne i_0.
\end{cases}
\]
Because $x(i_1)=\varnothing$, no $y\in M^M$ satisfies $y\in^{\mathcal{M}^*}x$. Consequently, $x\subseteq^{\mathcal{M}^*}f$ holds vacuously, so our assumption yields $x\in^{\mathcal{M}^*}p$. Evaluating this membership at $i_0$ gives
\[
p(i_0)=x(i_0)\in p(i_0),
\]
contradicting Foundation in the ground model $\mathcal{M}$. Thus no object contains all the internal subsets of $f$; in particular, the Powerset axiom fails.
\end{proof}



\begin{proposition}
$\mathcal{M}^* \models \mathrm{Foundation}$.
\end{proposition}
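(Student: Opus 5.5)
We need to show that every nonempty $f\in M^M$ (nonempty in $\mathcal{M}^*$, i.e.\ having some $\in^{\mathcal{M}^*}$-member) has an $\in^{\mathcal{M}^*}$-minimal member, meaning a member $g$ such that no $h$ satisfies $h\in^{\mathcal{M}^*} g$ and $h\in^{\mathcal{M}^*} f$. The plan is to reduce this to well-foundedness of $\in$ in the metatheory via ranks, rather than to transfer Foundation coordinatewise. Coordinatewise transfer is awkward: choosing an $\in$-minimal element of each $f(i)$ does give a minimal member, but this uses Choice and one must check that minimality is preserved. The rank argument is cleaner.

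Since $\mathcal{M}$ is a transitive set, every element of $M$ has a von Neumann rank in the metatheory. To each $g\in M^M$ we assign an ordinal $\rho(g)=\min_{i\in M}\mathrm{rank}(g(i))$. The key observation is the following. If $h\in^{\mathcal{M}^*} g$, then $h(i)\in g(i)$ for every $i$, so $\mathrm{rank}(h(i))<\mathrm{rank}(g(i))$ for every $i$. Choosing $i_0$ with $\mathrm{rank}(g(i_0))=\rho(g)$, we get $\rho(h)\le \mathrm{rank}(h(i_0))<\rho(g)$. Thus $\rho$ is strictly decreasing along $\in^{\mathcal{M}^*}$.

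Given $f$ with at least one member, the set of members $\{g\in M^M: g\in^{\mathcal{M}^*} f\}$ is a nonempty set in the metatheory. Among its elements we pick $g$ with $\rho(g)$ least, which is possible since the ordinals are well-ordered. If some $h$ were a member of both $f$ and $g$, then $\rho(h)<\rho(g)$ would contradict the minimality of $\rho(g)$. So $g$ witnesses Foundation for $f$.

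Alternatively, one can simply note that $\in^{\mathcal{M}^*}$ is well-founded on $M^M$, since any descending sequence would yield a descending sequence of ordinals $\rho$, and then invoke the standard fact that well-foundedness of the membership relation implies the Foundation axiom in the structure.

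The only delicate point, and the main obstacle, is making sure the Foundation axiom is formulated with "nonempty" read internally, as $\exists x\,(x\in f)$ in $\mathcal{M}^*$, rather than as $f(i)\neq\varnothing$. By Lemma~\ref{lem:pointwise-subset}, these differ exactly when some coordinate $f(i)$ is empty; in that case $f$ has no internal members and the instance holds vacuously, so nothing extra is needed. The argument uses Foundation in the metatheory, which is legitimate, and needs no Choice.
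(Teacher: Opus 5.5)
Your proof is correct, but it takes a different route from the paper's.

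\textbf{The paper's argument.} It works locally. Given a member $h$ of $f$, it fixes one index $i_0$ and uses Foundation inside $\mathcal{M}$ to pick $a\in f(i_0)$ with $a\cap f(i_0)=\varnothing$. It then lets $g$ agree with $h$ everywhere except $g(i_0)=a$. Any common member $t$ of $f$ and $g$ would give $t(i_0)\in f(i_0)\cap a=\varnothing$. So your worry about coordinatewise transfer does not arise: only one coordinate of an existing member is altered, no Choice is used, and minimality is checked at $i_0$ alone.

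\textbf{What your rank argument gains.} The function $\rho(g)=\min_i\mathrm{rank}(g(i))$ shows that $\in^{\mathcal{M}^*}$ is externally well-founded, which is more than the set-form axiom. Every nonempty parametrically definable class of $\mathcal{M}^*$ then has an $\in^{\mathcal{M}^*}$-minimal element, so the full $\in$-induction scheme holds in $\mathcal{M}^*$. That scheme cannot be derived from the axiom here in the usual way, since Separation fails in $\mathcal{M}^*$.

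\textbf{What it costs.} Your argument depends on $\in$ being well-founded on $M$ in the metatheory, which the transitivity of $\mathcal{M}$ guarantees. The paper's proof uses only $\mathcal{M}\models\mathrm{Foundation}$, so it would also work for an ill-founded $\mathcal{M}$. There, constant functions along an external descending $\in^{\mathcal{M}}$-chain already form a descending $\in^{\mathcal{M}^*}$-chain, so well-foundedness of $\in^{\mathcal{M}^*}$ is simply false.

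\textbf{A small side remark.} Lemma~\ref{lem:pointwise-subset} is about inclusion, not nonemptiness. The fact that $f$ has a member iff every $f(i)\neq\varnothing$ is only implicit in its proof and needs Choice in one direction. Your main argument never relies on it.
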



\begin{proof}
Let $f \in \mathcal{M}^*$ have a member $h$, and fix $i_0 \in M$. Since $h(i_0) \in f(i_0)$, the set $f(i_0)$ is nonempty. Foundation in $\mathcal{M}$ gives $a \in f(i_0)$ such that $a \cap f(i_0)=\varnothing$. Define $g \in M^M$ by
\[
g(i)=
\begin{cases}
a,&i=i_0,\\
h(i),&i\neq i_0.
\end{cases}
\]
Then $g(i) \in f(i)$ for every $i \in M$, so $g \in^{\mathcal{M}^*} f$. If some $t \in M^M$ were a member of both $f$ and $g$, we would have
\[
t(i_0) \in f(i_0) \cap g(i_0)
=f(i_0)\cap a=\varnothing,
\]
a contradiction. Thus $g$ witnesses Foundation for $f$.
\end{proof}

\begin{proposition}\label{prop:ext-fails}
$\mathcal{M}^* \not\models \mathrm{Extensionality}$.
\end{proposition}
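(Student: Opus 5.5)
The plan is to exhibit two distinct elements of $M^M$ that have exactly the same $\in^{\mathcal{M}^*}$-members. Lemma~\ref{lem:pointwise-subset} already points to the mechanism: any $f$ with $f(i_0)=\varnothing$ for some $i_0\in M$ has no members in $\mathcal{M}^*$. So every function that vanishes at one coordinate is, internally, an empty set. There are many such functions, which immediately gives a failure of Extensionality.

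Concretely, I fix distinct $i_0,i_1\in M$; this is possible since $M$ is a transitive model of ZF and so contains at least $\varnothing$ and $\{\varnothing\}$. I take $f=f_\varnothing$, the witness for the Empty Set axiom. For $g$, I let $g(i_0)=\varnothing$ and $g(i)=\omega$ for $i\neq i_0$. Then $g\neq f_\varnothing$ as functions, because $g(i_1)=\omega\neq\varnothing$. Equality in $\mathcal{M}^*$ is ordinary equality of functions, so the two are distinct elements of the structure.

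It remains to check that they have the same members. Suppose $x\in^{\mathcal{M}^*}g$ for some $x\in M^M$. Then $x(i_0)\in g(i_0)=\varnothing$, which is impossible. So $g$ has no members, and neither does $f_\varnothing$, as shown in the proof of the Empty Set axiom. Hence $\forall x\,(x\in^{\mathcal{M}^*}f\leftrightarrow x\in^{\mathcal{M}^*}g)$ holds in $\mathcal{M}^*$ while $f\neq g$, and Extensionality fails.

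There is no real obstacle here. The only points needing care are that $M$ has two distinct indices and that identity in the classical direct power is literal function equality, as fixed in Definition~\ref{def:direct-power}. One could also remark that the failure is not confined to empty objects. For instance, $f_{\{\varnothing\}}$ and the function taking the value $\{\varnothing\}$ at $i_0$ and $\{\varnothing,\omega\}$ elsewhere are distinct. Yet by the argument of Proposition~\ref{prop:pairing}, any member $x$ of the second that is not $f_\varnothing$ would have to take the value $\omega$ somewhere while $x(i_0)=\varnothing$. That second example needs more checking, so I would present only the empty-set counterexample as the proof.
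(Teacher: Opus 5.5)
Your proof is correct and is essentially the paper's: both pair $f_\varnothing$ with a second function that is empty at one coordinate and nonempty at another, so both objects are memberless yet distinct as functions (the paper puts $\{\varnothing\}$ at $i_0$ and $\varnothing$ elsewhere; you use the mirror image with $\omega$). Drop your closing side remark, because it is false: your second example (value $\{\varnothing\}$ at $i_0$, $\{\varnothing,\omega\}$ elsewhere) has the member $x$ with $x(i_1)=\omega$ and $x(i)=\varnothing$ otherwise, which is not a member of $f_{\{\varnothing\}}$; more generally, Lemma~\ref{lem:pointwise-subset} shows that if two objects have the same members and one of them has a member, they are pointwise equal, so Extensionality fails only among memberless objects.
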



\begin{proof}
Choose distinct $i_0,i_1 \in M$ and define $g \in M^M$ by
\[
g(i)=
\begin{cases}
\{\varnothing\},&i=i_0,\\
\varnothing,&i\neq i_0.
\end{cases}
\]
Then $g \neq f_\varnothing$, since $g(i_0)=\{\varnothing\}$. Nevertheless, neither object has a member in $\mathcal{M}^*$: $f_\varnothing$ is empty at every coordinate, and $g(i_1)=\varnothing$. Thus they have exactly the same members but are distinct, contradicting Extensionality.
\end{proof}

\begin{proposition}\label{prop:sep-fails}
$\mathcal{M}^* \not\models \mathrm{Separation}$.
\end{proposition}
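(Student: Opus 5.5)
The plan is to refute a single, parameter-free instance of Separation, exploiting the same phenomenon as in Proposition~\ref{prop:pairing}: membership in $\mathcal{M}^*$ is decided coordinatewise. If an object contains functions taking every value in a given set at a given coordinate, it is forced to contain every ``mixture'' of those functions. So no object can collect exactly the members of a given set that satisfy a formula, whenever the formula holds of all mixtures except one.

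Concretely, let $2=\{\varnothing,\{\varnothing\}\}\in M$ and take the ambient set $a \coloneqq f_2$. Its members in $\mathcal{M}^*$ are exactly the functions $x\in M^M$ with $x(i)\in\{\varnothing,\{\varnothing\}\}$ for all $i$. Use the formula $\varphi(x)\coloneqq \forall y\,\neg(y\in x)$, i.e.\ ``$x$ is empty''.

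First I will determine which members of $f_2$ satisfy $\varphi$ in $\mathcal{M}^*$.
\begin{itemize}
\item If $x(i_0)=\varnothing$ for some $i_0$, then $x$ has no member, exactly as in the proof of Proposition~\ref{prop:ext-fails}, so $\varphi(x)$ holds.
\item The only member of $f_2$ with no empty coordinate is the constant function $f_{\{\varnothing\}}$. Since $f_\varnothing\in^{\mathcal{M}^*}f_{\{\varnothing\}}$, this object fails $\varphi$.
\end{itemize}
Hence Separation would require some $b\in M^M$ whose members are exactly the $x\in M^M$ with all values in $2$ and at least one value $\varnothing$.

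Now suppose such a $b$ exists. Fix any $i\in M$ and pick $j\neq i$, which is possible since $M$ has at least two elements. Define $x_i$ by $x_i(i)=\{\varnothing\}$ and $x_i(k)=\varnothing$ for $k\neq i$. Then $x_i(j)=\varnothing$, so $x_i$ satisfies $\varphi$ and is a member of $f_2$; therefore $x_i\in^{\mathcal{M}^*}b$. Evaluating at coordinate $i$ gives $\{\varnothing\}\in b(i)$. As $i$ was arbitrary, $f_{\{\varnothing\}}(i)\in b(i)$ for every $i$, that is, $f_{\{\varnothing\}}\in^{\mathcal{M}^*}b$. But $f_{\{\varnothing\}}$ fails $\varphi$, a contradiction. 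So this instance of Separation fails, and $\mathcal{M}^*\not\models\mathrm{Separation}$.

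The only delicate point is choosing the formula so that its extension within $f_2$ is ``everything but one point'', while each coordinate value of that one point is realized by some satisfying function. Emptiness in $\mathcal{M}^*$ achieves this without parameters. The verification that $f_{\{\varnothing\}}$ is nonempty is immediate, so I expect no real obstacle beyond setting up this choice.
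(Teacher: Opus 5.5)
Your argument is correct, but it takes a different route from the paper's. The paper gives a one-line reduction to the failure of Pairing. It uses the Separation instance with the parametric, quantifier-free formula $x=p\vee x=q$, applied to an object containing $p=f_\varnothing$ and $q=f_\omega$. The separated object would then have exactly these two members, and the Pairing argument rules this out: any object containing both must also contain their coordinatewise mixtures.

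You instead build a self-contained counterexample with the parameter-free formula $\forall y\,\neg(y\in x)$ on the ambient object $f_2$. It combines two features of $\in^{\mathcal{M}^*}$. First, a single empty coordinate kills all members, as in the Extensionality counterexample. Second, the members of any object form a coordinatewise product. Hence the satisfying members of $f_2$ are all functions $M\to\{\varnothing,\{\varnothing\}\}$ except $f_{\{\varnothing\}}$, and your functions $x_i$ force $f_{\{\varnothing\}}$ back into any candidate $b$.

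Your version gives a slightly stronger conclusion: even a parameter-free instance of Separation fails. It also locates the failure in negation and universal quantification rather than disjunction. Both formulas lie outside the positive Horn fragment, where the paper shows Separation does hold, so the two counterexamples are consistent with that result. The paper's version is shorter, reuses an earlier result, and ties the failure to the classical treatment of disjunction that motivates its subsequent discussion of the PGA.
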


\begin{proof}
Just take the separation scheme defining the pairing of two members of a given $f \in \mathcal{M}^*$. As we have seen above, one of the directions of the biconditional of the scheme will fail.
\end{proof}

Call an $\mathcal{L}_{\in}$-formula $\varphi( x)$ \emph{inheritable relative to $\mathcal{M}$} if, for every $ f \in M^M$,
\[
\forall i\in M\;
\mathcal{M}\models\varphi( f(i))
~\Longrightarrow ~
\mathcal{M}^*\models\varphi( f).
\]
An explicit syntactic sufficient condition for inheritability is given by the \emph{positive Horn fragment}. This is the fragment generated from atomic formulas using conjunction and the two quantifiers:
\[
\psi\;:=\;x=y\;\mid\;x\in y\;\mid\;
(\psi\wedge\psi)\;\mid\;
\exists x\,\psi\;\mid\;\forall x\,\psi.
\]
Bounded quantifiers are treated as abbreviations and must be expanded before applying this syntactic criterion. For this fragment we have transfer in both directions, as follows.

\begin{lemma}\label{lem:positive-horn-pointwise}
For every positive Horn formula $\psi( x)$ and  $ f\in M^M$
\[
\mathcal{M}^*\models\psi( f)  \Longleftrightarrow 
\forall i\in M\; \mathcal{M}\models\psi( f(i)).
\] 
In particular, every positive Horn formula is inheritable.
\end{lemma}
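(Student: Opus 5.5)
We argue by induction on the construction of the positive Horn formula $\psi(x)$, proving the biconditional for all tuples $f\in M^M$ at once; here $\psi$ may have several free variables, and $f(i)$ denotes the tuple of values at $i$. The "in particular" clause is then just the right-to-left direction.

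\emph{Atomic cases.} For $x\in y$ the claim is the definition of $\in^{\mathcal{M}^*}$. For $x=y$ it is the definition of equality in the direct power (Definition~\ref{def:direct-power}): $f=g$ iff $f(i)=g(i)$ for every $i$.

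\emph{Conjunction.} This is immediate, because a universal quantifier over $i$ distributes over $\wedge$.

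\emph{Universal quantifier.} Let $\psi=\forall y\,\theta(x,y)$.
\begin{itemize}
\item Suppose $\mathcal{M}\models\forall y\,\theta(f(i),y)$ for every $i$. Given any $g\in M^M$, we have $\mathcal{M}\models\theta(f(i),g(i))$ for all $i$. The induction hypothesis gives $\mathcal{M}^*\models\theta(f,g)$, so $\mathcal{M}^*\models\forall y\,\theta(f,y)$.
\item Conversely, suppose $\mathcal{M}^*\models\forall y\,\theta(f,y)$. Fix $i_0$ and $b\in M$, and take the constant function $f_b$. Then $\mathcal{M}^*\models\theta(f,f_b)$. The induction hypothesis gives $\mathcal{M}\models\theta(f(i),b)$ for all $i$, in particular for $i=i_0$. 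Since $b$ was arbitrary, $\mathcal{M}\models\forall y\,\theta(f(i_0),y)$.
\end{itemize}

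\emph{Existential quantifier.} Let $\psi=\exists y\,\theta(x,y)$. This is the only step needing more than unwinding definitions.
\begin{itemize}
\item Suppose $\mathcal{M}^*\models\exists y\,\theta(f,y)$ with witness $g\in M^M$. The induction hypothesis gives $\mathcal{M}\models\theta(f(i),g(i))$ for every $i$, so each $g(i)$ witnesses $\exists y\,\theta(f(i),y)$ in $\mathcal{M}$.
\item Conversely, suppose that for each $i$ the set $W_i=\{b\in M:\mathcal{M}\models\theta(f(i),b)\}$ is nonempty. Using Choice in the metatheory, as in Lemma~\ref{lem:pointwise-subset} and Proposition~\ref{prop:union}, pick $g\in M^M$ with $g(i)\in W_i$. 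The induction hypothesis yields $\mathcal{M}^*\models\theta(f,g)$, hence $\mathcal{M}^*\models\psi(f)$.
\end{itemize}

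The one point I would double-check is that the induction hypothesis is applied to formulas with extra free variables, so the statement must be proved for arbitrary finite tuples of parameters. Beyond that, the absence of negation and implication is exactly what allows every clause to commute with the coordinatewise universal quantifier over $M$. As the paper notes, bounded quantifiers must be expanded first: $\forall y\,(y\in x\to\dots)$ introduces an implication, which is not positive Horn, and this is why that case is excluded from the induction.
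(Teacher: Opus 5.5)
Your proof is correct and follows the paper's argument essentially step for step. Both argue by induction over the positive Horn clauses, treating the other free variables as parameters, use constant functions $f_b$ for the direction from $\mathcal{M}^*\models\forall y\,\theta$ to the coordinatewise statement, and use Choice in the metatheory to assemble coordinatewise witnesses in the converse of the existential case.
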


\begin{proof}
We argue by induction on the complexity of $\psi$, uniformly allowing finitely many named parameters. Each parameter $h\in M^M$ is interpreted by $h$ in $\mathcal{M}^*$ and by $h(i)$ in $\mathcal{M}$ at index $i$. Thus, when applying the induction hypothesis to $\theta(g,x)$, we regard $g$ as a parameter. The atomic cases follow from the definition of membership in $\mathcal{M}^*$ and the equality of functions; the conjunction case follows immediately from the induction hypothesis.

Suppose $\psi(x)=\exists y\,\theta(y,x)$. If $\mathcal{M}^*\models\psi(f)$, take $g\in M^M$ such that $\mathcal{M}^*\models\theta(g,f)$. By the induction hypothesis, $\mathcal{M}\models\theta(g(i),f(i))$ for every $i\in M$, so $\mathcal{M}\models\exists y\,\theta(y,f(i))$ for every $i\in M$. Conversely, suppose the latter statement holds. For each $i\in M$, choose $m_i\in M$ such that $\mathcal{M}\models\theta(m_i,f(i))$. Using Choice in the metatheory, define $g\in M^M$ by $g(i)=m_i$. The induction hypothesis gives $\mathcal{M}^*\models\theta(g,f)$, hence $\mathcal{M}^*\models\psi(f)$.

Suppose $\psi(x)=\forall y\,\theta(y,x)$. If $\mathcal{M}^*\models\psi(f)$, fix $i\in M$ and $m\in M$, and let $f_m$ be the constant function with value $m$. Then $\mathcal{M}^*\models\theta(f_m,f)$, so the induction hypothesis yields $\mathcal{M}\models\theta(m,f(i))$. Since $i$ and $m$ were arbitrary, $\mathcal{M}\models\forall y\,\theta(y,f(i))$ for every $i\in M$. Conversely, suppose this holds and take any $g\in M^M$. Then $\mathcal{M}\models\theta(g(i),f(i))$ for every $i\in M$. By the induction hypothesis, $\mathcal{M}^*\models\theta(g,f)$. Since $g$ was arbitrary, $\mathcal{M}^*\models\psi(f)$.
\end{proof}

\begin{proposition}\label{prop:positive-horn-separation}
 $\mathcal{M}^*$ satisfies
$\mathrm{Separation}(\varphi)$ for every positive Horn formula
$\varphi(x)$.
\end{proposition}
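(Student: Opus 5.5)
The plan is to build the separating object coordinatewise, using Separation in the ground model $\mathcal{M}$ at each state, and then to use Lemma~\ref{lem:positive-horn-pointwise} to turn pointwise satisfaction of $\varphi$ back into satisfaction in $\mathcal{M}^*$. Fix a positive Horn formula $\varphi(x,\vec p)$, where we allow finitely many parameters as in the proof of that lemma. Fix parameters $\vec h\in M^M$ and an object $f\in M^M$. We must find $s\in M^M$ such that for every $x\in M^M$,
\[
x\in^{\mathcal{M}^*}s\quad\Longleftrightarrow\quad x\in^{\mathcal{M}^*}f\ \wedge\ \mathcal{M}^*\models\varphi(x,\vec h).
\]

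\textbf{Construction.} For each $i\in M$, put
\[
s(i)\coloneqq\{m\in f(i)\mid \mathcal{M}\models\varphi(m,\vec h(i))\}.
\]
Separation in $\mathcal{M}$, applied with parameters $\vec h(i)\in M$, gives $s(i)\in M$. Hence $s\in M^M$, and no metatheoretic Choice is needed here.

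\textbf{Verification.} Take any $x\in M^M$. By the definition of membership in the direct power,
\[
x\in^{\mathcal{M}^*}s \iff \forall i\in M\ \bigl(x(i)\in f(i)\wedge \mathcal{M}\models\varphi(x(i),\vec h(i))\bigr).
\]
The universal quantifier over $i$ distributes over the conjunction. The right-hand side therefore splits into two conditions: $\forall i\,(x(i)\in f(i))$, which is exactly $x\in^{\mathcal{M}^*}f$, and $\forall i\,\mathcal{M}\models\varphi(x(i),\vec h(i))$. By Lemma~\ref{lem:positive-horn-pointwise}, read with $\vec h$ as named parameters, the second condition is equivalent to $\mathcal{M}^*\models\varphi(x,\vec h)$. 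This gives the required biconditional.

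\textbf{Main obstacle.} The only delicate point is the second use of Lemma~\ref{lem:positive-horn-pointwise}: it needs the \emph{two-directional} transfer, not just inheritability. That is why the hypothesis restricts to positive Horn $\varphi$. For a formula with negation or disjunction, the set of $x$ that satisfy $\varphi$ pointwise differs from the set that satisfies it in $\mathcal{M}^*$; this is the failure exhibited in Proposition~\ref{prop:sep-fails}.

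Two further checks remain. First, the lemma's proof covers parameters uniformly, since parameter $h$ is evaluated as $h(i)$ at state $i$, exactly matching our definition of $s(i)$. Second, the construction is insensitive to coordinates where $f(i)$ or $s(i)$ is empty: membership in $\mathcal{M}^*$ is defined pointwise, so both sides of the biconditional then fail together, and no appeal to Lemma~\ref{lem:pointwise-subset} is needed.
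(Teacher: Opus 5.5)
Your proof is correct and follows the paper's argument essentially verbatim. You build the separating object coordinatewise as $s(i)=\{m\in f(i)\mid \mathcal{M}\models\varphi(m,\vec h(i))\}$ using Separation in $\mathcal{M}$, and you obtain both directions of the biconditional from the two-way transfer of Lemma~\ref{lem:positive-horn-pointwise}. The only difference is that you carry the parameters $\vec h$ explicitly; the paper's statement omits them, but the lemma's proof already handles them uniformly, so this is a harmless generalization.
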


\begin{proof}
Let $\varphi(x)$ be positive Horn and fix $f\in M^M$. Define $g$ by 
\[
g(i)=\{y\in f(i):\mathcal{M}\models\varphi(y)\}
~ (i\in M).
\]
Separation in $\mathcal{M}$ gives $g(i)\in M$ for every $i\in M$, so $g\in M^M$. If $h\in^{\mathcal{M}^*}g$, then $h(i)\in f(i)$ and $\mathcal{M}\models\varphi(h(i))$ for every $i\in M$. Thus $h\in^{\mathcal{M}^*}f$ and, by Lemma~\ref{lem:positive-horn-pointwise}, $\mathcal{M}^*\models\varphi(h)$.

Conversely, suppose $h\in^{\mathcal{M}^*}f$ and $\mathcal{M}^*\models\varphi(h)$. Then $h(i)\in f(i)$ for every $i\in M$, and Lemma~\ref{lem:positive-horn-pointwise} gives $\mathcal{M}\models\varphi(h(i))$ for every $i\in M$. Hence $h(i)\in g(i)$ for every $i\in M$, so $h\in^{\mathcal{M}^*}g$. Therefore, for every $h\in M^M$,
\[
h\in^{\mathcal{M}^*}g
\quad\Longleftrightarrow\quad
h\in^{\mathcal{M}^*}f
\quad\text{and}\quad
\mathcal{M}^*\models\varphi(h).
\]
Thus $g$ witnesses the required instance of Separation.
\end{proof}

\begin{proposition}\label{prop:collection}
 $\mathcal{M}^* \models \mathrm{Collection}$.
\end{proposition}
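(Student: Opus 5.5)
The plan is to verify each instance of Collection directly. Fix a formula $\varphi(x,y,\vec p)$, parameters $\vec p\in M^M$, and $f\in M^M$, and assume $\mathcal{M}^*\models\forall x\,(x\in^{\mathcal{M}^*}f\to\exists y\,\varphi(x,y,\vec p))$. We must produce $g\in M^M$ such that every member $x$ of $f$ has a witness $y\in^{\mathcal{M}^*}g$.

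If some $f(i_0)=\varnothing$, then $f$ has no members in $\mathcal{M}^*$, as in Lemma~\ref{lem:pointwise-subset}. In that case any $g$, say $f_\varnothing$, works. So assume every $f(i)$ is nonempty.

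Let $X=\{x\in M^M: x\in^{\mathcal{M}^*}f\}$; this is a set in the metatheory. Using Choice in the metatheory, pick for each $x\in X$ a witness $y_x\in M^M$ with $\mathcal{M}^*\models\varphi(x,y_x,\vec p)$. The natural candidate is
\[
g(i)=\{\,y_x(i): x\in X\,\}\qquad(i\in M),
\]
since then $y_x(i)\in g(i)$ for every $i$, that is, $y_x\in^{\mathcal{M}^*}g$, and Collection follows at once.

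The entire difficulty lies in checking that $g(i)\in M$. We cannot appeal to Collection or Replacement inside $\mathcal{M}$ here. The reason is that $\varphi$ is evaluated classically in $\mathcal{M}^*$ and need not be pointwise, unlike the positive Horn case of Lemma~\ref{lem:positive-horn-pointwise}. So $\{y_x(i):x\in X\}$ is merely a subset of $M$ in the metatheory, indexed by the possibly very large set $X$, which can have size $|M|^{|M|}$.

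To control it, I would first choose the witnesses economically. For $y\in M^M$, let $\rho(y)=\sup_i \mathrm{rank}(y(i))$, and for each $x$ pick $y_x$ with $\rho(y_x)$ minimal among witnesses. I would then try to show that these minima are bounded below the height $\mathrm{Ord}\cap M$ by some $\alpha\in M$. Once that holds, set $g(i)=V_\alpha^{\mathcal{M}}$, which is in $M$, and the proof is complete.

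For the bound I would exploit the symmetry of $\mathcal{M}^*$. Every permutation $\pi$ of the index set $M$ induces an automorphism $x\mapsto x\circ\pi$ of $\mathcal{M}^*$. Likewise, witnesses may be altered at coordinates that $\varphi$ ``does not see''. The hope is that only boundedly many rank patterns of witnesses are really needed.

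This step is the main obstacle, and I am not confident it goes through for arbitrary $\varphi$. If it fails, I would fall back in one of two ways. One option is to prove Collection for the positive Horn fragment. There Lemma~\ref{lem:positive-horn-pointwise} lets us apply Collection in $\mathcal{M}$ at each coordinate, obtaining $g(i)$ via Collection in $\mathcal{M}$ for $\varphi$ at $f(i)$. The other option is to examine whether the statement as posed needs the witnesses to be chosen uniformly via such coordinatewise Collection.
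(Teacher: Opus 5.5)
There is a genuine gap. The step you flag as the main obstacle is not merely unproven: as you formulate it, it is false. A single bound $\alpha$ with $g(i)=\mathcal{M}_\alpha$ for every $i$ cannot work in general.

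Take $f(i)=\{i\}$ and let $\varphi(x,y)$ be $y=x$. The only member of $f$ in $\mathcal{M}^*$ is $\mathrm{id}_M$, and its only witness is $\mathrm{id}_M$ itself, so $\rho=\sup_i\mathrm{rank}(i)$ equals the height of $M$. Moreover, $\mathrm{id}_M\in^{\mathcal{M}^*}g$ forces $i\in g(i)$ for every $i\in M$, which no fixed level $\mathcal{M}_\alpha$ satisfies. Since the witness is forced here, no symmetry argument via permutations of the index set can produce a uniform bound. The collecting object must be allowed to vary with the coordinate, and minimising the global quantity $\rho(y)=\sup_i\mathrm{rank}(y(i))$ is the wrong measure. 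Neither fallback proves the proposition either: the positive Horn case covers only some instances of the scheme.

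The missing idea is the Feferman--Vaught theorem for direct products, which makes precise your remark about coordinates that $\varphi$ ``does not see''.
\begin{enumerate}
\item It gives finitely many formulas $\theta_1,\dots,\theta_n$, with the parameters $\vec p$ as extra variables. Whether $\mathcal{M}^*\models\varphi(x,y,\vec p)$ depends only on the sets $\{i\in M\mid\mathcal{M}\models\theta_j(x(i),y(i),\vec p(i))\}$.
\item Consequently, a witness $y$ may be replaced by any $u$ that, at each coordinate $i$, realises with $x(i)$ the same Boolean combination $\chi_S$ of the $\theta_j$ as $y(i)$ does.
\item Now work coordinatewise inside $\mathcal{M}$. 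For each $i$ and each of the finitely many $S\subseteq\{1,\dots,n\}$, Separation and Collection in $\mathcal{M}$ give $B_{i,S}\in M$. This set contains a $\chi_S$-witness for every $a\in f(i)$ that has one.
\item Put $g(i)=\bigcup_S B_{i,S}$, which lies in $M$ because the union is finite.
\item Given $x\in^{\mathcal{M}^*}f$ with witness $y$, use Choice in the metatheory to pick $u(i)\in B_{i,S_i}$ realising the same type $S_i$ as $y(i)$. Then $u\in^{\mathcal{M}^*}g$, and Feferman--Vaught yields $\mathcal{M}^*\models\varphi(x,u,\vec p)$.
\end{enumerate}
This is the paper's argument. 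It also removes your worry that $\{y_x(i): x\in X\}$ need not belong to $M$, because you collect into the set $B_{i,S}$ supplied by $\mathcal{M}$ rather than into the image of the chosen witnesses.
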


\begin{proof}
Fix $\varphi(y,z)$ and let $f \in \mathcal{M}^*$ be such that $\mathcal{M}^* \models \forall y(y \in f \to \exists z\,\varphi(y,z)).$ By the Feferman--Vaught theorem for direct products \citep[Section~6.3]{ChangKeisler1990}, there are finitely many parameter-free formulas $\theta_1(y,z),\ldots,\theta_n(y,z)$ such that the truth of $\varphi(g,h)$ in $\mathcal{M}^*$ is determined by the sets
\[
\{i \in M \mid
\mathcal{M} \models \theta_j(g(i),h(i))\},
\qquad 1 \leq j \leq n.
\]
Thus, replacing $h$ while preserving these sets preserves the truth of $\varphi(g,h)$.

Let $J=\{1,\ldots,n\}$. For each $S \subseteq J$, define
\[
\chi_S(y,z) \coloneqq
\bigwedge_{j \in S}\theta_j(y,z)
\wedge
\bigwedge_{j \in J \setminus S}\neg\theta_j(y,z).
\]
Fix $i \in M$. Separation in $\mathcal{M}$ gives $A_{i,S} \coloneqq \{a \in f(i) \mid \mathcal{M} \models \exists b\,\chi_S(a,b)\} \in M.$ By Collection in $\mathcal{M}$, there is $B_{i,S} \in M$ such that, for every $a \in A_{i,S}$,
\[
\mathcal{M} \models
\exists b \in B_{i,S}\,\chi_S(a,b).
\]
Using Choice in the metatheory, choose these sets for every $i \in M$ and $S \subseteq J$, and define $w$ by 
\[
w(i) \coloneqq \bigcup_{S \subseteq J} B_{i,S}.
\]
Since $J$ is finite, $w(i) \in M$ for every $i \in M$, so $w \in M^M$.

Now let $g \in^{\mathcal{M}^*} f$. By our assumption, there is $h \in \mathcal{M}^*$ such that $\mathcal{M}^* \models \varphi(g,h)$. For each $i \in M$, let
\[
S_i \coloneqq
\{j \in J \mid
\mathcal{M} \models \theta_j(g(i),h(i))\}.
\]
Since $g(i) \in f(i)$ and $h(i)$ witnesses $\exists b\,\chi_{S_i}(g(i),b)$ in $\mathcal{M}$, we have $g(i) \in A_{i,S_i}$. Hence there is $b_i \in B_{i,S_i}$ such that $\mathcal{M} \models \chi_{S_i}(g(i),b_i).$ Using Choice in the metatheory, define $u \in M^M$ by $u(i)=b_i$. Then $u(i) \in w(i)$ for every $i \in M$, so $u \in^{\mathcal{M}^*} w$. Moreover, by the definition of $\chi_{S_i}$, for every $i \in M$ and $j \in J$,
\[
\mathcal{M} \models \theta_j(g(i),u(i))
\quad\Longleftrightarrow\quad
\mathcal{M} \models \theta_j(g(i),h(i)).
\]
The determining sets are therefore unchanged, and the Feferman--Vaught theorem yields $\mathcal{M}^* \models \varphi(g,u)$. Since $g$ was arbitrary, $\mathcal{M}^* \models \forall v(v \in f \to \exists u(u \in w \wedge \varphi(v,u))).$ Thus $w$ witnesses the required instance of Collection.
\end{proof}

Thus, Proposition \ref{prop:sep-fails} already blocks the claim that $\mathcal{M}^*$ satisfies ZFC—even with Extensionality set aside—since the full range of the Separation schema is unavailable. Nevertheless, Proposition~\ref{prop:positive-horn-separation} shows that Separation holds for positive Horn formulas.

As Fine would argue, the failure of Separation reflects a deeper incompatibility between classical compositional semantics and the full generality of the PGA. In $\mathcal{M}^*$, the truth condition for $\varphi \vee \psi$ is evaluated classically: $\mathcal{M}^* \models \varphi \vee \psi$ iff either $\mathcal{M}^* \models \varphi$ or $\mathcal{M}^* \models \psi$. But the broadest application of the PGA requires that a disjunction $\varphi(g) \vee \psi(g)$ concerning an arbitrary object $a$---where $g$ represents $a$---hold whenever $\varphi(g(i)) \vee \psi(g(i))$ holds at every state $i$, even if no single disjunct holds at all states simultaneously.


For example, let $F(x)$ express that $x$ is a finite ordinal, and let $L(x)$ express that $x$ is an infinite ordinal. Let $g \in M^M$ be a function whose range consists only of ordinals of $\mathcal{M}$ and contains at least one finite ordinal and at least one infinite ordinal. Then every value of $g$ is either finite or infinite, so
\[
\forall i \in M \bigl(F(g(i)) \vee L(g(i))\bigr).
\]
The PGA therefore requires $F(g) \vee L(g)$ to hold. In $\mathcal{M}^*$, however, this disjunction would require either
\[
\forall i \in M\, F(g(i)),
\]
that is, that every value assumed by $g$ be finite, or
\[
\forall i \in M\, L(g(i)),
\]
that is, that every value assumed by $g$ be infinite. Both conditions are false, since the range of $g$ contains both a finite and an infinite ordinal. The classical semantics of $\mathcal{M}^*$ is therefore incompatible with the unrestricted application of the PGA.

Thus, with a purely recursive definition of the extended membership relation, we end up with a structure in which arbitrary sets behave much like sets---except where it matters most. This is not entirely surprising: arbitrary sets, as interpreted here, are intensional in nature, meaning they are individuated by the way they are generated, not merely by the collection of their members. As a result, the Extensionality axiom fails in this setting. 
In what follows, we explore an alternative approach that modifies the semantics to better align with ordinary set-theoretic behavior.

\section{Models of functions in $\mathcal{M}$}\label{sec:two}

In this section, we introduce a semantics that departs from the one considered earlier. Let $\mathfrak{M} = \mathcal{M}^M$, that is, the direct power of $\mathcal{M}$ by the index set $M$, as in $\mathcal{M}^*$. We shall now, however, define a different assignment of truth values to formulas.\footnote{Notice $\mathcal{M}^*$ and $\mathfrak{M}$ have the same domain. We use different names for each model to stress the difference in their semantics.}

Let the Boolean algebra of truth values be $\mathcal{P}(M)$. We define the following Boolean evaluation of $\mathfrak{M}$.

\begin{definition}\label{def:truth-values}
For each $\mathcal{L}_\in$-formula $\varphi$ with parameters from $\mathfrak{M}$, define a truth value $\llbracket \varphi \rrbracket \subseteq M$ by recursion:

\begin{align*}
\llbracket x \in y \rrbracket & = \{i \in M \mid x(i) \in y(i)\},\\
\llbracket x = y \rrbracket &  = \{i \in M \mid x(i) = y(i)\},\\
\llbracket \neg \varphi \rrbracket &  = M \setminus \llbracket \varphi \rrbracket,\\
\llbracket \varphi \wedge \psi \rrbracket &  =\llbracket \varphi \rrbracket \cap \llbracket \psi \rrbracket,\\
\llbracket \varphi \vee \psi \rrbracket &  = \llbracket \varphi \rrbracket \cup \llbracket \psi \rrbracket,\\
\llbracket \varphi \to \psi \rrbracket &  = (M \setminus \llbracket \varphi \rrbracket) \cup \llbracket \psi \rrbracket,\\
\llbracket \exists x \varphi(x) \rrbracket &  = \bigcup_{u\in\mathfrak{M}} \llbracket \varphi(u) \rrbracket,\\
\llbracket \forall x \varphi(x) \rrbracket &  = \bigcap_{u\in\mathfrak{M}} \llbracket \varphi(u) \rrbracket.
\end{align*}

\noindent
We write $\mathfrak{M} \models \varphi$ iff $\llbracket \varphi \rrbracket = M$.
\end{definition}

Notice $\mathfrak{M}$ is thus a $\mathcal{P}(M)$-valued model of ZF(C), in the contemporary sense (\citep{Mansfield1971}, \citep{Hamkins2012}, \citep{Wu2024}).

We may also note that the above semantics is supervaluationistic, in accordance with Fine’s view. In fact, it allows for the desired non-compositionality of truth.

\begin{proposition}\label{prop:fundamental}
For every $\mathcal{L}_\in$-formula $\varphi(x_1,\dots,x_n)$ and $a_1, ...,a_n \in \mathfrak{M}$,

\[
\llbracket\varphi(a_1,\dots,a_n)\rrbracket
=\{i\in M \mid \mathcal{M} \models \varphi(a_1(i),\dots,a_n(i))\}.
\]
\end{proposition}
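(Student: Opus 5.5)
We argue by induction on the complexity of $\varphi$, proving the identity uniformly for all tuples of parameters $a_1,\dots,a_n\in\mathfrak{M}$. This uniformity is needed because the quantifier step substitutes new elements of $\mathfrak{M}$ for the bound variable. Throughout, write $S_\varphi(\vec a)=\{i\in M\mid \mathcal{M}\models\varphi(a_1(i),\dots,a_n(i))\}$.

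The atomic cases $x\in y$ and $x=y$ are literally the clauses of Definition~\ref{def:truth-values}. For the propositional connectives we use the Tarski clauses in $\mathcal{M}$, which act coordinatewise. For instance, $S_{\neg\varphi}(\vec a)=M\setminus S_\varphi(\vec a)$ and $S_{\varphi\wedge\psi}(\vec a)=S_\varphi(\vec a)\cap S_\psi(\vec a)$. The cases $\vee$ and $\to$ are handled the same way. Combined with the induction hypothesis and the corresponding clauses of Definition~\ref{def:truth-values}, each of these cases is immediate.

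For the universal quantifier, the induction hypothesis gives
\[
\llbracket\forall x\,\varphi(x,\vec a)\rrbracket=\bigcap_{u\in\mathfrak{M}}S_\varphi(u,\vec a).
\]
We show this equals $S_{\forall x\varphi}(\vec a)$.

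\begin{itemize}
\item If $i\in S_{\forall x\varphi}(\vec a)$, then for every $u$ the value $u(i)\in M$ satisfies $\varphi(u(i),\vec a(i))$ in $\mathcal{M}$, so $i$ lies in the intersection.
\item Conversely, suppose $i$ lies in the intersection and let $m\in M$. Taking the constant function $f_m$ shows $\mathcal{M}\models\varphi(m,\vec a(i))$. Hence $\mathcal{M}\models\forall x\,\varphi(x,\vec a(i))$.
\end{itemize}

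The existential quantifier is the only step with real content, and it is where the PGA-style non-compositionality is absorbed. By the induction hypothesis,
\[
\llbracket\exists x\,\varphi\rrbracket=\bigcup_{u}S_\varphi(u,\vec a).
\]
The inclusion of $\bigcup_u S_\varphi(u,\vec a)$ into $S_{\exists x\varphi}(\vec a)$ is immediate, since $u(i)$ is a witness at state $i$.

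For the reverse inclusion, fix $i$ with $\mathcal{M}\models\exists x\,\varphi(x,\vec a(i))$ and pick a witness $m\in M$. The constant function $f_m$, or any $u$ with $u(i)=m$, then places $i$ in $S_\varphi(f_m,\vec a)$. Notice that no Choice is needed here. This is because the union is taken state by state, so a separate witness may be used for each $i$. A single function realizing witnesses at all states simultaneously is never required, in contrast with Lemma~\ref{lem:positive-horn-pointwise}.

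The main point of care is therefore formulating the induction hypothesis for all parameter tuples at once, so that it applies to $\varphi(u,\vec a)$ for arbitrary $u\in\mathfrak{M}$. Once that is in place, both quantifier cases reduce to the observation that every element of $M$ is the value at $i$ of some element of $\mathfrak{M}$, for instance a constant function.
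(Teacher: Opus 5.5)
Your proof is correct and follows the same route as the paper: induction on formulas with parameters ranging over $\mathfrak{M}$, the atomic and Boolean cases read off Definition~\ref{def:truth-values}, and the quantifier cases settled through the union and intersection clauses. Where the paper just says ``which is equivalent to'', you make explicit the fact it relies on: every $m\in M$ is the value at $i$ of some element of $\mathfrak{M}$, for instance the constant function. That is why no Choice is needed here, unlike in the classical direct power.
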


\begin{proof}
By induction on the complexity of $\varphi$. Atomic and Boolean cases are immediate from Definition \ref{def:truth-values}. For quantifiers, e.g. $\exists x\,\psi(x,\vec a)$, we have
\[
\llbracket\exists x\,\psi(x,\vec a)\rrbracket
=\bigcup_{u\in\mathfrak{M}}\llbracket\psi(u,\vec a)\rrbracket.
\]
By the induction hypothesis, $i$ lies in the union iff there exists $u\in\mathfrak{M}$ with $M\models\psi(u(i),\vec a(i))$, which is equivalent to $M\models\exists x\,\psi(x,\vec a(i))$. The universal quantifier case is analogous.
\end{proof}

Since, in the defined semantics, a sentence is true just in case its assigned truth value is the top element $M$, we may notice the choice of $\mathcal{P}(M)$ as the designated Boolean algebra for the semantics is entirely natural by considering the PGA. Indeed, the above result shows how $\mathfrak{M}$ makes the PGA true: for $f_1, ..., f_n \in \mathfrak{M}$ and a formula $\varphi$, by the above definition and Proposition \ref{prop:fundamental}, $\varphi(f_1, ..., f_n)$ obtains in $\mathfrak{M}$ precisely when $\big{\{} i \in M \mid \varphi \big{(} f_1(i), ..., f_n(i) \big{)} \big{\}}$ is the top element of the Boolean algebra, and therefore just in case for every $i \in M$, $\varphi \big{(} f_1(i), ..., f_n(i) \big{)}$ obtains in $\mathcal{M}$. Given our observation, in Section \ref{sec:one}, of the representation of arbitrary objects by functions in the direct power, that can be translated to the fact $\varphi(a_1, ..., a_n)$ holds of the arbitrary sets $a_1, ..., a_n$ just in case for each state $w$ of their state space, $\varphi \big{(} \mathrm{v}(a_1, w), ..., \mathrm{v}(a_n, w) \big{)}$ is the case.

\begin{theorem}\label{thm:diagonal}
Define $\delta : M \to \prescript{M}{}{M}$ by $\delta(a)(i) = a$ for all $i\in M$. Then for every $\mathcal{L}_\in$-formula $\varphi(\vec{x})$ and $\vec{a} \in M$,

\[
\mathcal{M} \models \varphi(\vec{a}) \quad \Longleftrightarrow \quad \mathfrak{M} \models \varphi(\delta(\vec{a})).
\]
\end{theorem}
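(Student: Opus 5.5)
The plan is to derive the equivalence directly from Proposition~\ref{prop:fundamental}, specialised to constant functions. Fix an $\mathcal{L}_\in$-formula $\varphi(x_1,\dots,x_n)$ and $a_1,\dots,a_n\in M$. First I will check that $\delta(a_k)\in\mathfrak{M}$: it is the total function $M\to M$ with constant value $a_k$, that is, the function $f_{a_k}$ of Section~\ref{sec:one}. By Definition~\ref{def:truth-values}, the claim $\mathfrak{M}\models\varphi(\delta(\vec a))$ means exactly $\llbracket\varphi(\delta(a_1),\dots,\delta(a_n))\rrbracket=M$.

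Next I apply Proposition~\ref{prop:fundamental} with parameters $\delta(a_1),\dots,\delta(a_n)$. This gives
\[
\llbracket\varphi(\delta(\vec a))\rrbracket=\{i\in M\mid \mathcal{M}\models\varphi(\delta(a_1)(i),\dots,\delta(a_n)(i))\}=\{i\in M\mid \mathcal{M}\models\varphi(a_1,\dots,a_n)\}.
\]
The condition defining the right-hand set does not depend on $i$. So the set equals $M$ when $\mathcal{M}\models\varphi(\vec a)$, and equals $\varnothing$ otherwise.

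Both directions then follow at once. If $\mathcal{M}\models\varphi(\vec a)$, the truth value is $M$, so $\mathfrak{M}\models\varphi(\delta(\vec a))$. Conversely, if the truth value is $M$, then, since $M\neq\varnothing$, some $i\in M$ witnesses $\mathcal{M}\models\varphi(\vec a)$. Nonemptiness holds because $\mathcal{M}$ is a transitive model of ZF, so $\varnothing\in M$.

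I expect no real obstacle, since all the work is done by Proposition~\ref{prop:fundamental}. The one point to watch is that the proposition is stated for arbitrary parameters from $\mathfrak{M}$, so the induction behind it already covers formulas with free variables and parameters. Beyond that, the only extra ingredient is the nonemptiness of the index set $M$. As a by-product, the argument shows that sentences about constant objects always receive truth value $M$ or $\varnothing$, so $\delta$ is an elementary embedding into the two-valued part of $\mathfrak{M}$.
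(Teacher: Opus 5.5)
Your proposal is correct and follows the paper's proof, which simply applies Proposition~\ref{prop:fundamental} to $\varphi(\delta(\vec a))$. You also spell out two points the paper leaves implicit: the truth value of $\varphi(\delta(\vec a))$ is always $M$ or $\varnothing$, and the right-to-left direction needs $M \neq \varnothing$.
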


\begin{proof}
Apply Proposition \ref{prop:fundamental} to $\varphi(\delta(\vec{a}))$.
\end{proof}

\begin{corollary}\label{cor:zfc-transfer}
If $\mathcal{M} \models \mathrm{ZFC}$ then $\mathfrak{M} \models \mathrm{ZFC}$.
\end{corollary}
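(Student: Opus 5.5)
The plan is to reduce everything to Proposition~\ref{prop:fundamental}. Every axiom of ZFC (each instance of a scheme taken as its universal closure) is an $\mathcal{L}_\in$-sentence $\sigma$ with no parameters. By Proposition~\ref{prop:fundamental} applied with $n=0$, we get
\[
\llbracket \sigma \rrbracket=\{i\in M \mid \mathcal{M}\models\sigma\}.
\]
Since $\mathcal{M}\models\sigma$, this set is all of $M$, so $\mathfrak{M}\models\sigma$ by Definition~\ref{def:truth-values}. The same conclusion follows from Theorem~\ref{thm:diagonal} with empty parameter tuple.

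The only point that needs care is the treatment of schemes whose instances carry parameters, namely Separation and Replacement/Collection. The first option is to use their universal closures, which puts them in the parameter-free case above. For readers who prefer the parametrized reading, I would also verify directly that $\llbracket\forall \vec p\,\forall a\,\exists b\,\forall x(\ldots)\rrbracket=M$. Unfolding the $\forall$ clause turns this into an intersection over all $\vec p,a\in\mathfrak{M}$. Proposition~\ref{prop:fundamental} then reduces each term to the set of $i$ at which $\mathcal{M}$ satisfies the corresponding instance with parameters $\vec p(i),a(i)$, and that set is $M$ because $\mathcal{M}\models\mathrm{ZFC}$.

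Extensionality does not fail here, unlike in Proposition~\ref{prop:ext-fails}. The reason is that identity is now interpreted by $\llbracket x=y\rrbracket$ rather than by literal equality of functions, so it is just another parameter-free sentence.

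I expect essentially no obstacle beyond one implicit issue inside Proposition~\ref{prop:fundamental}. In the $\exists$ case, turning pointwise witnesses into a single $u\in\mathfrak{M}$ needs no choice, because the state $i$ is fixed: for fixed $i$, any witness $m$ in $\mathcal{M}$ can be taken as the constant function $f_m$. So the corollary holds even without Choice in the metatheory, and it applies equally to ZF in place of ZFC. I would close by remarking that "$\mathfrak{M}\models\mathrm{ZFC}$" is meant in the Boolean-valued sense, with every axiom receiving the top value $M$.
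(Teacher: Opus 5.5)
Your proof is correct and is the paper's own argument: every ZFC axiom (each scheme instance taken as its universal closure) is a parameter-free sentence, so Proposition~\ref{prop:fundamental} gives it truth value $\{i\in M\mid \mathcal{M}\models\sigma\}=M$. Your side remarks are also right: no metatheoretic Choice is needed because constant functions $\delta(m)$ supply the witnesses, and the argument works for ZF. One caution on the Extensionality aside: global identity in $\mathfrak{M}$ is still literal equality of functions. What rescues Extensionality is that the whole formula, including its quantified antecedent, is now evaluated pointwise.
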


\begin{proof}
Let $\varphi$ be any axiom (or instance of a scheme) of ZFC. Then $\varphi$ is a sentence, so Proposition \ref{prop:fundamental} yields

\[
\llbracket \varphi \rrbracket=\{i \in M \mid \mathcal{M} \models \varphi\}.
\]

\noindent
Since $\mathcal{M} \models \varphi$, this set is all of $M$, hence $\mathfrak{M} \models \varphi$.
\end{proof}

\subsection{Pointwise definable operations and two notions of ``between''}\label{subsec:card-arith}

From now on, assume that the ground model $\mathcal{M}$ satisfies $\mathrm{ZFC}$. By Corollary~\ref{cor:zfc-transfer}, $\mathfrak{M}\models\mathrm{ZFC}$, so all the standard constructions on ordinals and cardinals (successor, power set, cardinality, exponentiation) are available \emph{internally} in $\mathfrak{M}$. In our $\mathcal{P}(M)$-valued semantics these constructions are computed \emph{pointwise}.

\begin{proposition}\label{prop:definable-pointwise}
Let $\varphi(x,y)$ be an $\mathcal{L}_\in$-formula such that $\mathrm{ZFC}$ proves: for every $x$ there is a unique $y$ with $\varphi(x,y)$. If $f, g \in \mathfrak{M}$ satisfy $\mathfrak{M} \models \varphi(f,g)$, then for all $i \in M$ we have $\mathcal{M} \models \varphi(f(i), g(i))$. In particular, writing $\mathcal{P}^{\mathfrak{M}}(f)$ for the internal powerset of $f$ in $\mathfrak{M}$, and $\mathrm{card}^{\mathfrak{M}}(f)$ for the (internal) cardinal of $f$ in $\mathfrak{M}$, we have for every $f \in \mathfrak{M}$:

\[
\mathcal{P}^{\mathfrak{M}}(f)(i) = \mathcal{P}^\mathcal{M}(f(i)) \mbox{ and }\mathrm{card}^{\mathfrak{M}}(f)(i) = \mathrm{card}^\mathcal{M}(f(i)).
\]

\noindent
Moreover, for cardinals $\kappa, \lambda \in \mathbf{Card}
^{\mathfrak{M}}$,

\[
(\kappa^{+ \mathfrak{M}})(i) = (\kappa(i))^{+M},
\quad
(2^{\kappa, \mathfrak{M}})(i) = 2^{\kappa(i)}{}^{M},
\quad
(\kappa^{\lambda, \mathfrak{M}})(i) = (\kappa(i))^{\lambda(i)}{}^{M}.
\]
\end{proposition}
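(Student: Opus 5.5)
The main claim follows directly from Proposition~\ref{prop:fundamental}. If $\mathfrak{M}\models\varphi(f,g)$, then $\llbracket\varphi(f,g)\rrbracket=M$. By the pointwise evaluation, this set equals $\{i\in M\mid \mathcal{M}\models\varphi(f(i),g(i))\}$, so $\mathcal{M}\models\varphi(f(i),g(i))$ for every $i$. The functionality hypothesis is not needed for this implication. Its role is to make the notations $\mathcal{P}^{\mathfrak{M}}(f)$, $\mathrm{card}^{\mathfrak{M}}(f)$, and so on well defined.

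So first I will check that the internal operations exist and are unique as elements of $\mathfrak{M}$. Since $\mathcal{M}\models\mathrm{ZFC}$ and ZFC proves $\forall x\,\exists! y\,\varphi(x,y)$, Corollary~\ref{cor:zfc-transfer} gives $\mathfrak{M}\models\forall x\,\exists!y\,\varphi(x,y)$. Existence is witnessed concretely by $g(i):=$ the unique $y\in M$ with $\mathcal{M}\models\varphi(f(i),y)$. Choice in the metatheory is not even needed, since $y$ is uniquely determined at each coordinate. Proposition~\ref{prop:fundamental} then shows $\llbracket\varphi(f,g)\rrbracket=M$.

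For uniqueness, suppose $\mathfrak{M}\models\varphi(f,g')$ as well. By the first step, $g'(i)$ satisfies $\varphi(f(i),\cdot)$ at every $i$, so $g'(i)=g(i)$. Hence $g'=g$ as functions, and $\llbracket g=g'\rrbracket=M$ agrees with genuine equality. This is the one point needing care: in a general Boolean-valued model internal uniqueness holds only up to $\llbracket g=g'\rrbracket=1$. Here the identity clause of Definition~\ref{def:truth-values} makes $\llbracket g=g'\rrbracket=M$ coincide with literal equality of functions, so "the" internal value is a single element of $\mathfrak{M}$.

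The particular cases follow by instantiating $\varphi$:
\begin{itemize}
\item $y=\mathcal{P}(x)$ gives the powerset identity.
\item $y=|x|$, meaning the least ordinal equinumerous with $x$ (total under AC), gives $\mathrm{card}^{\mathfrak{M}}(f)(i)=\mathrm{card}^{\mathcal{M}}(f(i))$.
\item For successor and $2^{\kappa}$, take formulas that are total in ZFC. For instance, use "if $x$ is a cardinal then $y=x^+$, else $y=\varnothing$", so that ZFC proves totality.
\item For $\kappa^\lambda$, use the two-variable analogue of the argument, which goes through verbatim with a tuple of parameters.
\end{itemize}

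In these cases we must also know that $\kappa\in\mathbf{Card}^{\mathfrak{M}}$ means $\kappa(i)$ is a cardinal of $\mathcal{M}$ for each $i$. This again holds by Proposition~\ref{prop:fundamental} applied to the formula "$x$ is a cardinal". So on every coordinate the guarded formula takes its intended branch, and the displayed identities follow. I expect no real obstacle beyond this bookkeeping of totality and the literal uniqueness noted above.
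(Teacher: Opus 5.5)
Your proof is correct and follows the paper's route. The paper's proof is exactly your first step (global truth means $\llbracket\varphi(f,g)\rrbracket=M$, and Proposition~\ref{prop:fundamental} turns this into the coordinatewise statement), followed by instantiating $\varphi$ with ZFC definitions of powerset, cardinality, successor and exponentiation. Your extra checks are sound and make explicit what the paper's one-line proof leaves implicit: that the coordinatewise-defined witness exists and is unique as a literal element of $\mathfrak{M}$ because the identity clause is pointwise, and that the defining formulas can be made total by guarding.
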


\begin{proof}
If $\mathfrak{M} \models \varphi(f, g)$ then $\llbracket \varphi(f, g) \rrbracket = M$, and Proposition \ref{prop:fundamental} yields $\forall i \in M \big{(} \mathcal{M} \models \varphi(f(i), g(i)) \big{)}$. The displayed equalities follow by taking $\varphi$ to define the relevant operations in ZFC (powerset, cardinality, successor, exponentiation).
\end{proof}

For the discussion of crisp intermediates in Section~\ref{sec:three}, we introduce the following pointwise notation. For $\kappa \in \mathbf{Card}^{\mathfrak{M}}$, let
\[(2^{\kappa,\bullet})(i)
\coloneqq \bigl(2^{\kappa(i)}\bigr)^{\mathcal{M}}
\qquad (i \in M).
\]
By Proposition~\ref{prop:definable-pointwise}, this agrees with the internal cardinal exponentiation $2^{\kappa,\mathfrak{M}}$. The notation emphasizes its coordinatewise computation and will be used in Propositions~\ref{prop:no-mix-below} and~\ref{prop:mix-above}.

What changes relative to an ordinary two-valued model is not the arithmetic itself but the \emph{shape of comparisons}. For instance, for cardinals $\kappa, \lambda$ the truth value $\llbracket \kappa \leq \lambda \rrbracket$ records \emph{where} the inequality holds. ZFC proves linearity of cardinals as a disjunction, so in our semantics one typically has

\[
\llbracket \kappa\le\lambda\rrbracket\cup \llbracket \lambda\le\kappa\rrbracket = M
\]

\noindent
even when neither truth value is all of $M$.




Finally, constant cardinals already exhibit the intended intensionality: for $\kappa \in \mathbf{Card}^{\mathcal{M}}$, $\mathcal{P}^{\mathfrak{M}}(\delta(\kappa))$ is the constant function $\delta(\mathcal{P}^{\mathcal{M}}(\kappa))$, and its members in $\mathfrak{M}$ are precisely the functions $A \in \mathfrak{M}$ satisfying $A(i) \subseteq \kappa$ for every $i \in M$. Thus, external multiplicity of pointwise subsets does not translate into larger internal cardinalities, but it does generate many distinct arbitrary sets occupying the same coordinatewise scope.

\subsection{The cumulative hierarchy of $\mathfrak{M}$}\label{subsection cumulative hierarchy}

Despite the differences with usual models of ZFC, and the elements of $\mathfrak{M}$ being externally functions, there is a hierarchical conception behind them, which allows us to draw an intuitive definition of its levels. We shall follow a definition analogous to the standard cumulative hierarchy. 


For our fixed transitive ground model $\mathcal{M}$, put $\theta=M\cap\mathbf{On}$ and let $\mathcal{M}_\alpha$ denote its $\alpha$th cumulative level, for $\alpha<\theta$. Thus 
\[
\begin{aligned}
\mathcal{M}_0&=\varnothing,\\
\mathcal{M}_{\alpha+1}
  &=\mathcal{P}^{\mathcal{M}}(\mathcal{M}_\alpha),\\
\mathcal{M}_\lambda&=\bigcup_{\alpha<\lambda}\mathcal{M}_\alpha
       \qquad(\lambda<\theta\text{ a limit ordinal}),\\
M&=\bigcup_{\alpha<\theta}\mathcal{M}_\alpha.
\end{aligned}
\]
Here and below, indices of ground-model levels are less than $\theta$, and powersets in the ground hierarchy are computed in $\mathcal{M}$.

Following this construction, in building the model of arbitrary sets, we start by analogously taking an arbitrary empty set as the $0$th level of the hierarchy. Within arbitrary object theory, an arbitrary empty set may naturally be taken to be an arbitrary object whose values are all empty sets. But that produces the constant arbitrary object with value the empty set in every state. Denote this object by $\delta(\varnothing)$. We may then take each successor level of the hierarchy to be built by taking arbitrary subsets of the preceding level, and limit levels as the union of all preceding levels. In that way, for example, the $1$st level is composed by collecting arbitrary subsets of the $0$th level, $\delta(\varnothing)$. Its only internal subset is $\delta(\varnothing)$, and its internal powerset is therefore $\delta(\{\varnothing\})$. The $2$nd level, in its turn, is likewise composed by collecting arbitrary subsets of the $1$st level. Here, we find the first explosion of the external cardinalities of the levels compared to the classical von Neumann hierarchy. For both $\varnothing$ and $\{\varnothing\}$ are subsets of the latter, so since the subset relation is evaluated pointwise, any arbitrary set assuming values in the set $\big{\{} \varnothing, \{\varnothing\} \big{\}}$ is an arbitrary subset of $\delta(\{\varnothing\})$. Thus the external collection of globally true members of the
$2$nd level has cardinality $2^{|M|}$.

Given the pointwise interpretation of functions for arbitrary sets, as previously noted, an arbitrary set $f$ is a subset of an arbitrary set $g$ when in each state of the state space, the value of $f$ is a subset of the value of $g$. Recall from our previous observations that this means the power set of an arbitrary set $f$ is the arbitrary set which assumes as a value the power set of the value of $f$ in every state of the state space. Thus, the construction of the hierarchy follows the same pattern as von Neumann's, in that one starts with the arbitrary empty set as the $0$th level, takes the (arbitrary) powerset of the preceding level to compose the successor level, and takes (arbitrary) unions at limit levels, since collecting arbitrary subsets inside $\mathfrak{M}$ of a given level $\alpha$ of the hierarchy of $\mathfrak{M}$ is to simply take the (arbitrary) power set of the $\alpha$th level.

More precisely, let $\mathfrak{M}_\alpha$ be the $\alpha$th level of the hierarchy of $\mathfrak{M}$. Then, we may see:\[
\begin{aligned}
\mathfrak{M}_0
  &= \delta(\varnothing),\\
\mathfrak{M}_{\alpha+1}
  &= \mathcal{P}^{\mathfrak{M}}(\mathfrak{M}_\alpha)
   = \delta(\mathcal{M}_{\alpha+1}),\\
\mathfrak{M}_\lambda
  &= \bigcup_{\alpha < \lambda}^{\mathfrak{M}} \mathfrak{M}_\alpha
   = \delta(\mathcal{M}_\lambda),
\end{aligned}
\]

\noindent
where $\delta(\mathcal{M}_\alpha)$ is the constant arbitrary set assuming value $\mathcal{M}_\alpha$.

We may now observe that, given the above construction of the levels of the cumulative hierarchy of $\mathfrak{M}$, there is a correspondence between the elements of the $\alpha$th level of the hierarchy and what the usual conception of arbitrary objects would tell us the arbitrary sets of the level $\alpha$ would be. 

Intuitively, an arbitrary set of the $\alpha$th level of the cumulative hierarchy of $\mathcal{M}$ is an arbitrary object assuming values inside $\mathcal{M}_\alpha$---that is, such that in every state of the state space its value is an element of $\mathcal{M}_\alpha$. In general, that tells us $f$ is an arbitrary set of $\mathcal{M}_\alpha$ when $\forall i \in M \big{(} f(i) \in \mathcal{M}_\alpha \big{)}$. 

Given our previous exposition of the adapted von Neumann construction of $\mathfrak{M}$, we may notice $f$ is in $\mathfrak{M}_{\alpha+1}$ iff it is in the arbitrary power set of $\mathfrak{M}_\alpha$, iff it is an arbitrary subset of $\mathfrak{M}_\alpha$, iff in every state the value of $f$ is a subset of the value of $\mathfrak{M}_\alpha = \delta(\mathcal{M}_\alpha)$, iff in every state the value $f$ assumes is in $\mathcal{M}_{\alpha+1}$. Thus,
\[
\begin{aligned}
f \in^{\mathfrak{M}} \mathfrak{M}_{\alpha+1}
  &\Longleftrightarrow
    f \in^{\mathfrak{M}} \mathcal{P}^{\mathfrak{M}}(\mathfrak{M}_\alpha)\\
  &\Longleftrightarrow
    \forall i \in M\,
    \bigl(f(i) \in \mathcal{P}(\mathcal{M}_\alpha)\bigr)\\
  &\Longleftrightarrow
    \forall i \in M\,
    \bigl(f(i) \in \mathcal{M}_{\alpha+1}\bigr),\\[0.5em]
f \in^{\mathfrak{M}} \mathfrak{M}_\lambda
  &\Longleftrightarrow
    \forall i \in M\,
    \bigl(f(i) \in \bigcup_{\alpha < \lambda } \mathcal{M}_\alpha\bigr),
    \quad \text{for limit } \lambda\\
  &\Longleftrightarrow
    \forall i \in M\,
    \bigl(f(i) \in \mathcal{M}_\lambda\bigr).
\end{aligned}
\]





\noindent


At this point, we may distinguish the internal hierarchy objects from their external collections of globally true members. For $\alpha<\theta$, let
\[
V_\alpha^{\mathfrak{M}}
 :=\{f\in M^M\mid\mathfrak{M}\models f\in\mathfrak{M}_\alpha\}
 =\{f\in M^M\mid\forall i\in M\,
                         f(i)\in\mathcal{M}_\alpha\}.
\]
Thus $V_0^{\mathfrak{M}}=\varnothing$, and
\[
V_{\alpha+1}^{\mathfrak{M}}
 =\{f\in M^M\mid\forall i\in M\,
            f(i)\in\mathcal{P}^{\mathcal{M}}(\mathcal{M}_\alpha)\}.
\]
For a limit ordinal $\lambda<\theta$, however, we have only
\[
\bigcup_{\alpha<\lambda}V_\alpha^{\mathfrak{M}}
 \subseteq V_\lambda^{\mathfrak{M}},
\]
and the inclusion can be strict. For example, define $f(i)=i$ when $i\in\omega$, and $f(i)=\varnothing$ otherwise. Then $f\in V_\omega^{\mathfrak{M}}$, but $f\notin V_n^{\mathfrak{M}}$ for any $n<\omega$, since $f(n)=n\notin\mathcal{M}_n$. The internal union at a limit stage therefore need not give the external union of the collections of globally true members. 

Each $V_\alpha^{\mathfrak{M}}$ has external cardinality $|\mathcal{M}_\alpha|^{|M|}$. In particular, for $1<\alpha<\theta$ this is $2^{|M|}$, since $2\leq|\mathcal{M}_\alpha|\leq|M|$. This external collection should be distinguished from the single internal object $\mathfrak{M}_\alpha=\delta(\mathcal{M}_\alpha)$. Both hierarchies are increasing: for $\alpha\leq\beta<\theta$,

\begin{center}
$\mathfrak{M}\models
  \mathfrak{M}_\alpha\subseteq\mathfrak{M}_\beta,
\qquad
V_\alpha^{\mathfrak{M}}\subseteq V_\beta^{\mathfrak{M}}$.
\end{center}

\noindent
The external hierarchy need not be continuous at limits.

Nevertheless, even the union of all these external levels does not exhaust the function domain. Put
\[
V^{\mathfrak{M}}
 :=\bigcup_{\alpha<\theta}V_\alpha^{\mathfrak{M}}
 \subsetneq M^M.
\]
Indeed, $\mathrm{id}_M\in M^M$, whereas for every $\alpha<\theta$ we have $\mathrm{id}_M(\alpha)=\alpha\notin\mathcal{M}_\alpha$. Thus $V^{\mathfrak{M}}$ consists precisely of the arbitrary sets whose values admit a uniform bound in the ground hierarchy.

\subsection{Arbitrary ordinals and arbitrary cardinals}

Let $\mathrm{Ord}(x)$ be the usual formula expressing that $x$ is an ordinal. The arbitrary ordinals are defined internally:
\[
\begin{aligned}
\mathbf{On}^{\mathfrak{M}}
 &:=\{f\in\mathfrak{M}\mid
               \mathfrak{M}\models\mathrm{Ord}(f)\}\\
 &=\{f\in M^M\mid\forall i\in M\,
               \mathcal{M}\models\mathrm{Ord}(f(i))\}\\
 &=\{f\in M^M\mid
               \operatorname{ran}(f)\subseteq\mathbf{On}^{\mathcal{M}}\},
\end{aligned}
\]
where the second equality follows from Proposition~\ref{prop:fundamental}. In particular, the logical conditions defining an ordinal are evaluated within $\mathfrak{M}$, before taking global truth.

Thus $f$ is an ordinal of $\mathfrak{M}$ exactly when every value of $f$ is an ordinal of $\mathcal{M}$. Internally, the usual ordinal trichotomy remains valid. Externally, the relation $\mathfrak{M}\models f\in g$ need not linearly order $\mathbf{On}^{\mathfrak{M}}$. For example, take $f,g\in\mathfrak{M}$
such that for every $i\in M$,

\medskip

\begin{minipage}{0.45\textwidth}
\begin{center}
    $f(i) = \begin{cases}
        \varnothing,\  i = \varnothing\\
        \{\varnothing\},\ i\neq\varnothing,
    \end{cases}$
\end{center}
\end{minipage}
\begin{minipage}{0.45\textwidth}
\begin{center}
    $g(i) = \begin{cases}
        \{\varnothing\},\  i = \varnothing\\
        \varnothing,\ i\neq\varnothing,
        \end{cases}$
\end{center}
\end{minipage}

\medskip

\noindent
Then, $f, g \in \mathbf{On}^\mathfrak{M}$, but $\mathfrak{M} \not\models f \in g$ and $\mathfrak{M} \not\models g \in f$. Nevertheless,

\begin{center}
$\mathfrak{M} \models f \in g \vee g \in f \vee f = g$,
\end{center}

\noindent
since for any $i \in M$, either $f(i) \in^\mathcal{M} g(i)$, $g(i) \in^\mathcal{M} f(i)$, or $f(i) = g(i)$.

Nevertheless, considering that  $\mathfrak{M}$ is a model of the arbitrary sets of $\mathcal{M}$, an arbitrary ordinal of $\mathcal{M}$ should be an arbitrary object whose values are ordinals in $\mathcal{M}$. In other words, the ordinals of a model of arbitrary sets are precisely the arbitrary ordinals of that model.

Similarly, cardinality is defined using the internal ordinal order. Let $\operatorname{Bij}(F,x,y)$ express that $F$ is a bijection from $x$ onto $y$, and put

\begin{center}
$\mathbf{C}_f
 :=\{g\in\mathbf{On}^{\mathfrak{M}}\mid
       \mathfrak{M}\models\exists F\,
                    \operatorname{Bij}(F,f,g)\}$.
\end{center}

\noindent
By Proposition \ref{prop:fundamental}, $g\in\mathbf{C}_f$ exactly when $f(i)$ and $g(i)$ are equinumerous in $\mathcal{M}$ for every $i\in M$. An internal bijection is therefore represented by a function whose values are ground-model bijections, rather than by an external bijection between collections of globally true members.

\begin{proposition}
For every $f\in\mathfrak{M}$, the function

\begin{center}
$g(i)=\operatorname{card}^{\mathcal{M}}(f(i))
\qquad(i\in M)$
\end{center}

\noindent
belongs to $\mathbf{C}_f$ and satisfies $\mathfrak{M}\models g\leq h$ for every $h\in\mathbf{C}_f$. It is the unique member of $\mathbf{C}_f$ with this property.
\end{proposition}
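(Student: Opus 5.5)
The plan is to reduce everything to the pointwise evaluation of Proposition~\ref{prop:fundamental}, so that each clause becomes a statement about $\mathcal{M}$ at each coordinate.

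First, $g\in M^M$: since $\mathcal{M}\models\mathrm{ZFC}$, $\operatorname{card}^{\mathcal{M}}(f(i))$ exists in $M$ for each $i$. Each value is an ordinal of $\mathcal{M}$, so $g\in\mathbf{On}^{\mathfrak{M}}$ by the characterisation of arbitrary ordinals. For each $i$, $\mathcal{M}\models\exists F\,\operatorname{Bij}(F,f(i),g(i))$ by the definition of cardinality in $\mathcal{M}$. Hence $\llbracket\exists F\,\operatorname{Bij}(F,f,g)\rrbracket=M$ by Proposition~\ref{prop:fundamental}, and $g\in\mathbf{C}_f$. No external choice of bijections is needed, since the existential is evaluated coordinatewise.

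For minimality, take $h\in\mathbf{C}_f$. By the description of $\mathbf{C}_f$ following its definition, $h(i)$ is an ordinal of $\mathcal{M}$ equinumerous with $f(i)$ for every $i$. Since $\operatorname{card}^{\mathcal{M}}(f(i))$ is the least such ordinal in $\mathcal{M}$, we get $\mathcal{M}\models g(i)\le h(i)$ at every coordinate. Here $\le$ on ordinals is the formula $x\in y\vee x=y$, and its truth value is then all of $M$, so $\mathfrak{M}\models g\le h$.

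For uniqueness, suppose $g'\in\mathbf{C}_f$ also satisfies $\mathfrak{M}\models g'\le h$ for all $h\in\mathbf{C}_f$. Applying this with $h=g$, and the previous step with $h=g'$, gives $\mathfrak{M}\models g\le g'\wedge g'\le g$. Pointwise this means $g(i)\le g'(i)\le g(i)$ for every $i$. By antisymmetry of the ordinal order in $\mathcal{M}$, $g(i)=g'(i)$ for all $i$, so $g=g'$ as functions. This is genuine external equality, not merely $\mathfrak{M}\models g=g'$; the two coincide anyway, since $\llbracket g=g'\rrbracket=M$ means equality at every coordinate.

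The main obstacle is a subtlety rather than a hard step. Minimality must be stated over all of $\mathbf{C}_f$ as an external collection, and the key observation is that membership in $\mathbf{C}_f$ is itself a global (every-coordinate) condition. A mixed $h$ that was equinumerous with $f$ only on some coordinates is therefore excluded, and the pointwise comparison goes through coordinate by coordinate.
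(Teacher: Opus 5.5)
Your proposal is correct and follows essentially the same route as the paper: both reduce membership in $\mathbf{C}_f$, minimality, and uniqueness to coordinatewise facts about $\mathcal{M}$ via Proposition~\ref{prop:fundamental}. Both use that $\operatorname{card}^{\mathcal{M}}(f(i))$ is the least ordinal equinumerous with $f(i)$, and antisymmetry of the ordinal order for uniqueness; your extra remarks (no external choice is needed, and membership in $\mathbf{C}_f$ is a global condition) are accurate elaborations of steps the paper leaves implicit.
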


\begin{proof}
For every $i\in M$, $g(i)$ is the least ordinal equinumerous with $f(i)$ in $\mathcal{M}$. Proposition~\ref{prop:fundamental} therefore gives $g\in\mathbf{C}_f$. If $h\in\mathbf{C}_f$, then $\mathcal{M}\models g(i)\leq h(i)$ for every $i\in M$, so the same proposition yields $\mathfrak{M}\models g\leq h$. If $g'$ has the same property, comparison in both directions gives $g(i)=g'(i)$ for every $i\in M$, and hence $g=g'$.
\end{proof}

Consequently, the assignment

\begin{center}
$\operatorname{card}^{\mathfrak{M}}(f)(i)
   :=\operatorname{card}^{\mathcal{M}}(f(i))$
\end{center}

\noindent
is total, and

\begin{center}
$\begin{aligned}
\mathbf{Card}^{\mathfrak{M}}
 &:=\operatorname{ran}(\operatorname{card}^{\mathfrak{M}})\\
 &=\{f\in M^M\mid\forall i\in M\,
                       f(i)\in\mathbf{Card}^{\mathcal{M}}\}.
\end{aligned}$
\end{center}

\noindent
Thus the cardinals of the model are precisely the arbitrary ground-model cardinals. The minimality just proved uses the internal non-strict ordinal comparison. It does not say that every other member of $\mathbf{C}_f$ is either globally equal to $g$ or has $g$ as a globally true member: different coordinates may witness equality and strict inequality.

For arbitrary cardinals $\kappa,\eta$, comparison is likewise computed internally:

\begin{center}
$\begin{aligned}
\mathfrak{M}\models\kappa\leq\eta
 &\Longleftrightarrow
     \forall i\in M\,
       (\mathcal{M}\models\kappa(i)\leq\eta(i))\\
 &\Longleftrightarrow
     \mathfrak{M}\models\exists F\,
                (F\text{ is an injection from }\kappa\text{ into }\eta).
\end{aligned}$
\end{center}

The following result shows that an arbitrary cardinal can be globally bounded by two constant cardinals without being globally equal to either.

\begin{theorem}\label{theorem gch fails}
Let $\kappa, \eta \in \mathbf{Card}^\mathcal{M}$ be such that $\mathcal{M} \models \kappa < \eta$. Then, there is $\tau \in \mathbf{Card}^\mathfrak{M}$ such that $\mathfrak{M} \models \delta(\kappa) \leq \tau \leq \delta(\eta)$, but $\mathfrak{M} \not\models \tau = \delta(\kappa)$ and $\mathfrak{M} \not\models \tau = \delta(\eta)$.
\end{theorem}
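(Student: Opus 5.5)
The plan is to build $\tau$ by mixing the two endpoints coordinatewise, and then reduce every global claim to a pointwise check via Proposition~\ref{prop:fundamental}. Since $M$ is a transitive model of ZFC, it has at least two elements; I fix distinct $i_0,i_1\in M$ (for instance $i_0=\varnothing$ and $i_1=\{\varnothing\}$). I then define $\tau\in M^M$ by
\[
\tau(i)=\begin{cases}\kappa,& i=i_0,\\ \eta,& i\neq i_0.\end{cases}
\]
Every value of $\tau$ is a cardinal of $\mathcal{M}$. By the description $\mathbf{Card}^{\mathfrak{M}}=\{f\in M^M\mid\forall i\in M\, f(i)\in\mathbf{Card}^{\mathcal{M}}\}$ given above, this yields $\tau\in\mathbf{Card}^{\mathfrak{M}}$.

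Next I verify the bounds. The formula $x\leq y\wedge y\leq z$ has truth value
\[
\{i\in M\mid \mathcal{M}\models \kappa\leq\tau(i)\leq\eta\}
\]
at the parameters $\delta(\kappa),\tau,\delta(\eta)$. Each $\tau(i)$ is either $\kappa$ or $\eta$, and $\mathcal{M}\models\kappa<\eta$, so this set is all of $M$. Hence $\mathfrak{M}\models\delta(\kappa)\leq\tau\leq\delta(\eta)$. The same holds whether $\leq$ is read as the ordinal comparison or as the existence of an injection, since by the displayed equivalence preceding the theorem these agree pointwise.

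Finally I show that neither equality holds globally. By Proposition~\ref{prop:fundamental},
\[
\llbracket\tau=\delta(\kappa)\rrbracket=\{i\mid\tau(i)=\kappa\}=\{i_0\}.
\]
This misses $i_1$, so it is not $M$. Likewise, $\llbracket\tau=\delta(\eta)\rrbracket=M\setminus\{i_0\}$, which misses $i_0$. Here I use $\kappa\neq\eta$, which follows from $\kappa<\eta$ in $\mathcal{M}$. So $\mathfrak{M}\not\models\tau=\delta(\kappa)$ and $\mathfrak{M}\not\models\tau=\delta(\eta)$.

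No step is a real obstacle. The only point needing care is that "$\not\models$" means the truth value is not the top element $M$, rather than that the negation holds. This is exactly the supervaluational gap the construction exploits: $\llbracket\tau=\delta(\kappa)\vee\tau=\delta(\eta)\rrbracket=M$ even though neither disjunct is globally true.
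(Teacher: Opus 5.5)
Your proposal is correct and matches the paper's proof essentially verbatim. It uses the same mixture $\tau$, equal to $\kappa$ at a single index $i_0$ and to $\eta$ elsewhere, with the bounds and the truth values $\{i_0\}$ and $M\setminus\{i_0\}$ computed via Proposition~\ref{prop:fundamental}. As in the paper, global equality is then ruled out because $M$ has at least two elements.
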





\begin{proof}
Fix $i_0 \in M$ and define $\tau \in M^M$ by

\begin{center}
$\tau(i)\coloneqq
\begin{cases}
\kappa,&i=i_0,\\
\eta,&i\neq i_0.
\end{cases}$
\end{center}

\noindent
For every $i \in M$, $\tau(i)$ is a cardinal of $\mathcal{M}$ and
$\mathcal{M}\models\kappa\leq\tau(i)\leq\eta$. By Proposition~\ref{prop:fundamental}, we therefore have $\tau\in\mathbf{Card}^{\mathfrak{M}}$ and

\begin{center}
$\mathfrak{M}\models
\delta(\kappa)\leq\tau\leq\delta(\eta)$
\end{center}

\noindent
Moreover, since $\kappa\neq\eta$,

\begin{center}
$\llbracket\tau=\delta(\kappa)\rrbracket=\{i_0\},
\qquad
\llbracket\tau=\delta(\eta)\rrbracket=M\setminus\{i_0\}.$
\end{center}
Since $M$ has at least two elements, neither truth value is $M$. Hence $\mathfrak{M}\not\models\tau=\delta(\kappa)$ and $\mathfrak{M}\not\models\tau=\delta(\eta)$, as required.
\end{proof}


For a concrete instance of Theorem~\ref{theorem gch fails}, consider $\delta(\omega^{\mathcal{M}})$ and $\delta(\omega_1^{\mathcal{M}})$. Choose a nonempty proper subset $A \subsetneq M$ and define
\[
\tau(i)=
\begin{cases}
\omega^{\mathcal{M}},&i\in A,\\
\omega_1^{\mathcal{M}},&i\notin A.
\end{cases}
\]
Then $\tau\in\mathbf{Card}^{\mathfrak{M}}$ and $\mathfrak{M}\models
\delta(\omega^{\mathcal{M}})
\leq\tau\leq
\delta(\omega_1^{\mathcal{M}}).$ Moreover,
\[
\llbracket\tau=\delta(\omega^{\mathcal{M}})\rrbracket=A,
\mbox{ and }
\llbracket\tau=\delta(\omega_1^{\mathcal{M}})\rrbracket
=M\setminus A.
\]
Neither equality therefore holds globally, although
\[
\mathfrak{M}\models
\tau=\delta(\omega^{\mathcal{M}})
\vee
\tau=\delta(\omega_1^{\mathcal{M}}).
\]

This does not conflict with the fact that $\delta(\omega_1^{\mathcal{M}})$ is the first uncountable cardinal internally. Indeed,
\[
\llbracket
\delta(\omega^{\mathcal{M}})<\tau
\wedge
\tau<\delta(\omega_1^{\mathcal{M}})
\rrbracket
=(M\setminus A)\cap A=\varnothing.
\]
The apparent intermediate is obtained by varying between the two endpoint values across states. Internally, there is still no cardinal strictly between $\omega$ and $\omega_1$. 
More generally, Proposition~\ref{prop:fundamental} gives
\[
\mathfrak{M}\models\mathrm{GCH}
\quad\Longleftrightarrow\quad
\mathcal{M}\models\mathrm{GCH}.
\]
Thus GCH holds internally exactly when it holds in the ground model. The existence of globally distinct intermediates does not change its internal truth value.

\subsection{Arbitrary sets, in general}

The preceding two subsections have shown that the pre-theoretic notions of arbitrary ordinals, cardinals, and members of a given level of the von Neumann hierarchy match with their respective model-theoretic counterparts: ordinals of the model of arbitrary sets are precisely the arbitrary ordinals (that is, the arbitrary sets whose values are all ordinals), the elements of the $\alpha$th level of the cumulative hierarchy of the model of arbitrary sets are exactly the arbitrary sets of the $\alpha$th level, and so on.

In fact, this phenomenon is easily extended to any definable set of the ground model $\mathcal{M}$. More precisely, let a given set-theoretic concept be defined by the formula $\varphi(x)$ (such as being a limit regular cardinal, being an ultrafilter, being ordered, etc.). Then, the arbitrary $\varphi$'s (with respect to $\mathcal{M}$) are precisely the $\varphi$'s of $\mathfrak{M}$. In other words, the arbitrary sets whose values all satisfy $\varphi$ in $\mathcal{M}$ satisfy $\varphi$ in $\mathfrak{M}$. More specifically, that means the PGA may be correctly applied in $\mathfrak{M}$.

\begin{proposition}[PGA]\label{prop pga}
For every $\mathcal{L}_\in$-formula $\varphi(x_1, \ldots, x_n)$ and every $f_1, \ldots, f_n \in \mathfrak{M}$,

\begin{center}
$\mathfrak{M} \models \varphi(f_1, \ldots, f_n) \Longleftrightarrow \forall i \in M (\mathcal{M} \models \varphi(f_1(i),\ldots, f_n(i)) \big{)}$.
\end{center}
\end{proposition}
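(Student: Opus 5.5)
This is essentially a restatement of Proposition~\ref{prop:fundamental} combined with the definition of global truth in Definition~\ref{def:truth-values}, so the plan is to unfold the definitions rather than build anything new. By definition, $\mathfrak{M}\models\varphi(f_1,\ldots,f_n)$ holds iff
\[
\llbracket\varphi(f_1,\ldots,f_n)\rrbracket=M .
\]
Proposition~\ref{prop:fundamental}, applied with $a_k=f_k$, identifies this truth value with
\[
\{i\in M\mid \mathcal{M}\models\varphi(f_1(i),\ldots,f_n(i))\}.
\]
A subset of $M$ defined by a condition on $i$ equals $M$ exactly when every $i\in M$ satisfies that condition. Chaining these equivalences gives both directions of the biconditional at once.

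The only place where real work could hide is Proposition~\ref{prop:fundamental} itself, which we take as established. If one wanted to be careful, the step to revisit is its quantifier case. For $\exists x\,\psi$, the inclusion of the pointwise set into $\bigcup_{u}\llbracket\psi(u,\vec f)\rrbracket$ needs, for a given $i$ with $\mathcal{M}\models\exists x\,\psi(x,\vec f(i))$, some $u\in\mathfrak{M}$ with $\mathcal{M}\models\psi(u(i),\vec f(i))$. This is easily met by taking the constant function $\delta(m)$ for a single witness $m$, so no metatheoretic Choice is needed. The universal case is dual: the instances $\delta(m)$ for $m\in M$ show that $\bigcap_u\llbracket\psi(u,\vec f)\rrbracket$ is contained in the pointwise set, and the reverse inclusion is immediate from the induction hypothesis.

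I therefore expect no genuine obstacle. The proof is a two-line corollary whose substance lies entirely in the pointwise evaluation lemma. One should note explicitly that, unlike in $\mathcal{M}^*$ (cf.\ Lemma~\ref{lem:positive-horn-pointwise}), no syntactic restriction on $\varphi$ is required, because disjunction and negation are evaluated in $\mathcal{P}(M)$ rather than classically.
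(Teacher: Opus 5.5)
Your proposal is correct and matches the paper, which likewise treats the statement as immediate from Proposition~\ref{prop:fundamental}, since $\mathfrak{M}\models\varphi$ means $\llbracket\varphi\rrbracket=M$. Your extra check of the quantifier case is accurate: constant witnesses $\delta(m)$ suffice at each fixed $i$, so no metatheoretic Choice is needed. It is not required for this step.
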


\begin{proof}
Immediate from Proposition \ref{prop:fundamental}.
\end{proof}

Proposition \ref{prop pga} is exactly the broadest reading of the PGA: a formula about arbitrary sets $\varphi(f_1, \ldots, f_n)$ holds in $\mathfrak{M}$ iff it holds of the values $f_1(i),\ldots,f_n(i)$ at every state $i \in M$.

\subsection{Crisp relations}\label{subsec:crisp}

It is useful to distinguish between \emph{truth values} and \emph{global truth}. For any formula $\varphi$ we have a truth value $\llbracket \varphi \rrbracket \subseteq M$, and we declared $\mathfrak{M} \models\varphi$ iff $\llbracket \varphi \rrbracket = M$.
Accordingly, we write

\[
a =_\Box b \quad :\Longleftrightarrow \quad \mathfrak{M} \models a = b
\qquad\text{and}\qquad
a \in_\Box b \quad :\Longleftrightarrow \quad \mathfrak{M} \models a \in b.
\]

\noindent
By Proposition \ref{prop:fundamental}, these are simply the \emph{pointwise} relations:

\[
a =_\Box b\ \Longleftrightarrow\ \forall i \in M (a(i) = b(i)),
\qquad
a \in_\Box b\ \Longleftrightarrow\ \forall i \in M (a(i) \in b(i)).
\]




We use the same convention for inclusion. Then, $x\subseteq_\Box y$ means
$\mathfrak{M} \models x\subseteq y$. For arbitrary ordinals, and hence for arbitrary cardinals, define
\[
\kappa\leq_\Box\lambda
  \quad:\Longleftrightarrow\quad
       \mathfrak{M}\models\kappa\leq\lambda.
\]
We reserve $<_\Box$ for the strict part of this external partial order:
\[
\kappa<_\Box\lambda
  \quad:\Longleftrightarrow\quad
  \kappa\leq_\Box\lambda\ \text{and}\
                         \neg(\lambda\leq_\Box\kappa).
\]
By Proposition~\ref{prop:fundamental},
\[
\kappa<_\Box\lambda
\quad\Longleftrightarrow\quad
\bigl(\forall i\in M\;\kappa(i)\leq\lambda(i)\bigr)
\ \text{and}\
\bigl(\exists i\in M\;\kappa(i)<\lambda(i)\bigr).
\]
This should be distinguished from the stronger assertion
\[
\mathfrak{M}\models\kappa<\lambda
\quad\Longleftrightarrow\quad
\forall i\in M\;\kappa(i)<\lambda(i).
\]
For cardinals, the pointwise comparisons are those computed in the ground model, equivalently by ground-model injections.



In an ordinary two-valued model of ZFC, the fact that ordinals are linearly ordered by $\in$ implies that for any two ordinals $\alpha, \beta$, exactly one of $\alpha \in \beta$, $\alpha = \beta$, or $\beta \in \alpha$ holds. In our $\mathcal{P}(M)$-valued semantics, ZFC still proves (and hence $\mathfrak{M}$ still satisfies) the \emph{trichotomy disjunction}

\[
\forall \alpha \forall \beta \big{(} \mathrm{Ord}(\alpha) \wedge \mathrm{Ord}(\beta) \to (\alpha \in \beta\ \vee\ \alpha = \beta\ \vee\ \beta \in \alpha) \big{)},
\]

\noindent
but this no longer forces one fixed disjunct to hold \emph{globally} (i.e.\ with truth value $M$). Different indices may witness different disjuncts.

\begin{proposition}\label{prop:incomparable-ordinals}
There exist $f, g\in \mathbf{On}^{\mathfrak{M}}$ such that neither $f \in_\Box g$ nor $g \in_\Box f$ nor $f =_\Box g$, while $\mathfrak{M} \models (f \in g \vee g\in f \vee f = g)$.
\end{proposition}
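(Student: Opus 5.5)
The plan is to reuse the explicit pair of arbitrary ordinals already exhibited in the subsection on arbitrary ordinals, and to verify each clause through Proposition~\ref{prop:fundamental}. Fix a state $i_0\in M$; since $M$ is transitive and models ZFC we may take $i_0=\varnothing$, and $M$ has at least two elements. Define $f(i_0)=\varnothing$ and $f(i)=\{\varnothing\}$ for $i\neq i_0$. Define $g(i_0)=\{\varnothing\}$ and $g(i)=\varnothing$ for $i\neq i_0$. Every value of $f$ and $g$ is an ordinal of $\mathcal{M}$, namely $0$ or $1$. So, by the characterization of $\mathbf{On}^{\mathfrak{M}}$ (itself an instance of Proposition~\ref{prop:fundamental}), both $f$ and $g$ belong to $\mathbf{On}^{\mathfrak{M}}$.

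Next we compute the three relevant truth values pointwise:
\[
\llbracket f\in g\rrbracket=\{i_0\},\qquad
\llbracket g\in f\rrbracket=M\setminus\{i_0\},\qquad
\llbracket f=g\rrbracket=\varnothing .
\]
The first holds because $0\in 1$ only at $i_0$. The second holds because $0\in 1$ at every $i\neq i_0$. The third holds because $f$ and $g$ differ at every coordinate. Since $M$ has at least two elements, neither $\{i_0\}$ nor $M\setminus\{i_0\}$ equals $M$. Also $\varnothing\neq M$. By the pointwise description of $\in_\Box$ and $=_\Box$, none of $f\in_\Box g$, $g\in_\Box f$, $f=_\Box g$ holds.

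Finally, the truth value of the disjunction is the union of the three sets above, which is $M$. Hence $\mathfrak{M}\models f\in g\vee g\in f\vee f=g$. Alternatively, one can note that the trichotomy disjunction is a ZFC theorem and instantiate it using Corollary~\ref{cor:zfc-transfer}.

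There is no real obstacle here. The only point needing care is the use of $|M|\geq 2$, so that the state $i_0$ leaves a nonempty complement. This is automatic, because $\varnothing,\{\varnothing\}\in M$.
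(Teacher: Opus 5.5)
Your proposal is correct and follows the paper's proof: it uses the same pair of functions swapping $\varnothing$ and $\{\varnothing\}$ at a fixed state $i_0$, and the same pointwise computation of the truth values via Proposition~\ref{prop:fundamental}, whose union is $M$. Your explicit computation $\llbracket f=g\rrbracket=\varnothing$ and your remark that $|M|\geq 2$ because $\varnothing,\{\varnothing\}\in M$ make two points the paper leaves brief.
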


\begin{proof}
Fix an index $i_0 \in M$. Define $f, g \in \mathfrak{M}$ by

\[
f(i) = \begin{cases}
\varnothing & i = i_0,\\
\{\varnothing\} & i \neq i_0
\end{cases}
\qquad
g(i)= \begin{cases}
\{\varnothing\} & i = i_0,\\
\varnothing & i \neq i_0
\end{cases}
\]

\noindent
Then for every $i$, both $f(i)$ and $g(i)$ are ordinals of $\mathcal{M}$, so by our previous observations, $f, g \in \mathbf{On}^{\mathfrak{M}}$.

At $i = i_0$ we have $f(i_0) = \varnothing \in \{\varnothing\} = g(i_0)$, while at $i \neq i_0$ we have $g(i) = \varnothing \in \{\varnothing\} = f(i)$. Hence $\llbracket f \in g \rrbracket$ is the singleton $\{i_0\}$ and $\llbracket g \in f \rrbracket = M \setminus \{i_0\}$, so neither has truth value $M$, i.e.\ neither $f \in_\Box g$, nor $g \in_\Box f$, nor $f =_\Box g$. But

\[
\llbracket f \in g\ \vee\ g \in f\ \vee\ f = g \rrbracket
=
\llbracket f \in g \rrbracket \cup \llbracket g \in f \rrbracket \cup \llbracket f = g \rrbracket
=
M,
\]

\noindent
so $\mathfrak{M} \models (f \in g \vee g \in f \vee f = g)$. Finally, $f \neq_\Box g$ since $f(i_0) \neq g(i_0)$.
\end{proof}

\subsection{Boolean-valued models of set theory}

Given our Boolean-valued semantics for the direct-power model of arbitrary sets, a comparison with Boolean-valued models of set theory is in order. To this end, let us briefly recall their construction \citep{Bell2011}.

Given a complete Boolean algebra $\mathbb{B}= \langle \mathbf{A}; \wedge, \vee, \Rightarrow, ^* , 1,0 \rangle$, the corresponding universe of $\mathbb{B}$-valued sets is defined as follows:

\medskip

\[
\begin{aligned}
V^{\mathbb{B}}_0
  &= \varnothing,\\
V^{\mathbb{B}}_{\alpha+1}
  &= \{\, f : X \to \mathbb{B} \mid X \subseteq V^{\mathbb{B}}_\alpha \,\},\\
V^{\mathbb{B}}_\lambda
  &= \bigcup_{\alpha < \lambda} V^{\mathbb{B}}_\alpha
     \quad \text{for limit } \lambda,\\
V^{\mathbb{B}}
  &= \bigcup_{\alpha \in \mathbf{On}} V^{\mathbb{B}}_\alpha.
\end{aligned}
\]

\medskip

We can extend the language of set theory $\mathcal{L}_{\in}$ with constants for each  $\mathbb{B}$-valued set. The resulting language is denoted by $\mathcal{L}_{\mathbb{B}}$.  Then, for any $f,g\in V^{\mathbb{B}}$, the atomic formulas of $\mathcal{L}_{\mathbb{B}}$ are evaluated in $V^{\mathbb{B}}$ as follows:
\[
\begin{aligned}
\llbracket f \in g \rrbracket
  &= \bigvee_{x \in \operatorname{dom}(g)}
     \bigl(g(x) \wedge \llbracket f = x \rrbracket\bigr),\\[0.5em]
\llbracket f = g \rrbracket
  &= \bigwedge_{x \in \operatorname{dom}(f)}
     \bigl(f(x) \Rightarrow \llbracket x \in g \rrbracket\bigr)
     \wedge
     \bigwedge_{x \in \operatorname{dom}(g)}
     \bigl(g(x) \Rightarrow \llbracket x \in f \rrbracket\bigr).
\end{aligned}
\]
The remaining formulas $\varphi \in \mathcal{L}_{\mathbb{B}}$ are evaluated as follows:

\[
\begin{aligned}
\llbracket \neg \varphi \rrbracket
  &=  (\llbracket \varphi \rrbracket)^*,\\
\llbracket \varphi \wedge \psi \rrbracket
  &= \llbracket \varphi \rrbracket \wedge \llbracket \psi \rrbracket,\\
\llbracket \varphi \vee \psi \rrbracket
  &= \llbracket \varphi \rrbracket \vee \llbracket \psi \rrbracket,\\
\llbracket \varphi \to \psi \rrbracket
  &= \llbracket \varphi \rrbracket \Rightarrow \llbracket \psi \rrbracket,\\
\llbracket \varphi \leftrightarrow \psi \rrbracket
  &= \llbracket \varphi \to \psi \rrbracket
     \wedge
     \llbracket \psi \to \varphi \rrbracket,\\
\llbracket \exists x\, \varphi(x) \rrbracket
  &= \bigvee_{f \in V^{\mathbb{B}}}
     \llbracket \varphi(f) \rrbracket,\\
\llbracket \forall x\, \varphi(x) \rrbracket
  &= \bigwedge_{f \in V^{\mathbb{B}}}
     \llbracket \varphi(f) \rrbracket.
\end{aligned}
\]

\noindent
A formula $\varphi$ of the extended language is satisfied in $V^{\mathbb{B}}$ when the value assigned to $\varphi$ is the top element of $\mathbb{B}$, that is, when $\llbracket \varphi \rrbracket = 1$.

\citep{Horsten2019} argues that Boolean-valued sets, for a complete Boolean algebra $\mathbb{B}$, can be understood as arbitrary objects. Each $u \in V^\mathbb{B}$ determines a function $u^* : \mathbb{B} \to V^\mathbb{B}$ associating to each state $b \in \mathbb{B}$ the unique $V^\mathbb{B}$-set that $u$ becomes in that state. The state space is thus indexed by $\mathbb{B}$, and the valuation is given by $\mathrm{v}(u, b) = u^*(b)$. The proposal is attractive as an argument for understanding Boolean-valued sets as arbitrary objects \emph{simpliciter}. However, it faces the following structural obstacle, if one wants to argue that Boolean-valued sets can represent arbitrary sets in general---that is, any arbitrary set of a given model. For a Boolean-valued set $u \in V^\mathbb{B}$ to be a general arbitrary set, its scope must be all of $V^\mathbb{B}$; but the scope of $u$ is determined by $\mathrm{dom}(u)$, which is a subset of $V^\mathbb{B}$ defined by transfinite recursion on the rank of $u$. For some $u$ to represent a general arbitrary set, every element of $V^\mathbb{B}$ would need to appear in $\mathrm{dom}(u)$, requiring $\mathbb{B}$ to be at least as large as $V^\mathbb{B}$ itself.

The first difference we may notice between $V^{\mathbb{B}}$ and $\mathfrak{M}$ is the following. In the former, the elements of the model are partial functions whose domains are contained in $V^{\mathbb{B}}$ and whose values are elements of the Boolean algebra $\mathbb{B}$ used in the semantics. In the latter, by contrast, the elements are total functions with domain $M$ and codomain $M$, rather than $\mathcal{P}(M)$, the Boolean algebra of truth values. In other words, elements of $V^{\mathbb{B}}$ have recursively defined domains, whereas elements of $\mathfrak{M}$ have a fixed domain and a fixed codomain. Thus, the latter may be simpler to work with, since it avoids this recursive complexity.

Furthermore, $\mathfrak{M}$ contains the identity function $\mathrm{id}_{\mathcal{M}}$, which may take any element of $\mathcal{M}$ as its value at some state and therefore represents a general arbitrary set. There is no natural analogue in $V^{\mathbb{B}}$ of such an arbitrary set.

We now show that, despite these differences, the bounded arbitrary sets in $\mathfrak{M}$ embed faithfully into $V^{\mathcal{P}(M)}$. Let $V^\mathfrak{M} = \bigcup\limits_{\alpha \in \mathbf{On}} V^\mathfrak{M}_\alpha \neq \mathfrak{M}$.

\begin{theorem}\label{theorem embedding boolean model}
There is a Boolean-valued embedding $\mathfrak{M}\upharpoonright_{V^{\mathfrak{M}}} \hookrightarrow V^{\mathcal{P}(M)}$.
\end{theorem}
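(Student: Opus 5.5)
We define the embedding $e$ by $\in$-recursion along the external hierarchy $V^{\mathfrak{M}}_\alpha$, mimicking the standard mixing of canonical names in $V^{\mathbb{B}}$. For $\mathbb{B}=\mathcal{P}(M)$ and a ground set $a\in M$, let $\check a$ be the usual canonical name, $\check a=\{(\check b, M)\mid b\in a\}$. For $f\in V^{\mathfrak{M}}_\alpha$, so that $f(i)\in\mathcal{M}_\alpha$ for all $i$, we set
\[
e(f)\coloneqq\Bigl\{\bigl(\check b,\ \{i\in M\mid b\in f(i)\}\bigr)\ \Bigm|\ b\in \textstyle\bigcup_{i\in M} f(i)\Bigr\}.
\]
Equivalently, $e(f)$ is the mixture $\sum_{i}\{i\}\cdot\check{f(i)}$. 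The domain of $e(f)$ is contained in $\{\check b\mid b\in\mathcal{M}_\alpha\}$, so $e(f)\in V^{\mathcal{P}(M)}_{\alpha+1}$. Uniform boundedness is used exactly here: for the unbounded functions (e.g.\ $\mathrm{id}_M$) the domain would still be a set, but we restrict to $V^{\mathfrak{M}}$ as the statement does.

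The key lemma is that for $a,b\in M$ we have $\llbracket \check a\in\check b\rrbracket=M$ or $\varnothing$ according to whether $a\in b$, and likewise for $=$. This is standard, by induction on rank, since $\mathcal{P}(M)$ is complete and atomic. Using it, we compute the atomic values of the embedded objects:
\[
\llbracket e(f)\in e(g)\rrbracket^{V^{\mathcal{P}(M)}}=\{i\mid f(i)\in g(i)\},\qquad \llbracket e(f)=e(g)\rrbracket^{V^{\mathcal{P}(M)}}=\{i\mid f(i)=g(i)\}.
\]
The cleanest route is to show first that $\llbracket e(f)=\check a\rrbracket=\{i\mid f(i)=a\}$, using the fact that the sets $\{i\mid f(i)=a\}$, $a\in\operatorname{ran} f$, form a partition of $M$. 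Then $e(f)$ is the mixture of the $\check{f(i)}$ over the atoms, and the atomic values follow by distributing over the partitions induced by $f$ and $g$. Comparing with Definition~\ref{def:truth-values}, $e$ preserves the truth values of atomic formulas exactly.

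To extend this to all formulas, as a Boolean-valued embedding should, we use the atomicity of $\mathcal{P}(M)$: for each $i$, the principal ultrafilter $U_i$ at $i$ collapses $V^{\mathcal{P}(M)}$ to $V$ with $x\mapsto x/U_i$, and $e(f)/U_i=f(i)$. Then for any formula, $\llbracket\varphi(e(\vec f))\rrbracket=\{i\mid V\models\varphi(\vec f(i))\}$, by the \L o\'s-style fact for atomic algebras. Comparing this with Proposition~\ref{prop:fundamental} gives agreement, provided $\mathcal{M}$ and $V$ agree on $\varphi$. That holds only for $\Delta_0$ formulas, since $\mathcal{M}$ is transitive. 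We therefore claim preservation of atomic truth values and of $\Delta_0$ formulas, which is what ``Boolean-valued embedding'' should mean here. We also check injectivity modulo $=_\Box$: if $e(f)=e(g)$ has value $M$, then $f=g$ pointwise.

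The main obstacle is the computation of $\llbracket e(f)=e(g)\rrbracket$ through the recursive Boolean equality clause. We handle it by reducing to canonical names and the mixing lemma, and the key inductive step is establishing $\llbracket\check a=\check b\rrbracket\in\{\varnothing,M\}$ by $\in$-induction in $V$.
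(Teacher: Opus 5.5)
Your proof is correct, but it builds a different embedding from the paper's.

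The paper defines $e$ by recursion on $\rho(f)$, with $\operatorname{dom}(e(f))=\{e(h)\mid h\in V^{\mathfrak M}_{\rho(f)}\}$ and $e(f)(e(h))=\llbracket h\in f\rrbracket_{\mathfrak M}$. It then proves the $=$ and $\in$ clauses simultaneously by induction over the levels. The only auxiliary fact it uses is that every $x\in\mathcal M_\alpha$ is the value at $i$ of a constant function in $V^{\mathfrak M}_\alpha$.

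You instead take $e(f)$ to be the mixture of the canonical names $\check{f(i)}$ over the atoms $\{i\}$. This definition is non-recursive. It reduces the verification to two standard facts about $V^{\mathbb B}$: the two-valuedness of $\llbracket\check a\in\check b\rrbracket$ and $\llbracket\check a=\check b\rrbracket$ (true for every complete $\mathbb B$, so atomicity is not needed there), and the mixing lemma.

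Your route buys more than the paper's.

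\begin{itemize}
\item It gives preservation of all $\Delta_0$ formulas via $e(f)/U_i=f(i)$, not just of atomic ones.
\item It never actually uses the uniform bound. Since $\theta=M\cap\mathbf{On}$ is an ordinal of the metatheory, every $\check b$ with $b\in M$ lies in $V^{\mathcal P(M)}_\theta$. The same definition and computations therefore place $e(f)\in V^{\mathcal P(M)}_{\theta+1}$ for unbounded $f$ such as $\mathrm{id}_M$ as well. Your remark that boundedness ``is used exactly here'' undersells your construction.
\item It avoids recursing along the external levels $V^{\mathfrak M}_\alpha$, which are discontinuous at limits. Taken literally, the paper's recursion is circular there. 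The paper's own example $h$ ($h(i)=i$ for $i\in\omega$, $\varnothing$ otherwise) has $\rho(h)=\omega$ and $h\in V^{\mathfrak M}_\omega$, so $e(h)$ would have to lie in its own domain. For the same reason, the limit case of the paper's induction is not immediate.
\end{itemize}

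The paper's defect is easily repaired, e.g.\ by letting $\operatorname{dom}(e(f))=\{e(h)\mid\rho(h)<\rho(f)\}$ and inducting on $\rho$. After that repair, the paper's names and yours differ as sets but are Boolean-equal with value $M$, since both evaluate to $f(i)$ at the principal ultrafilter on $i$.

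What the paper's version offers in return is self-containment: it needs no canonical names, mixing lemma, or ultrafilter evaluation. Its names also have domains consisting of images of arbitrary sets, so they directly mirror the hierarchy $V^{\mathfrak M}_\alpha$.
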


\begin{proof}
Let $\mathbb{B}=\mathcal{P}(M)$. For $f\in V^{\mathfrak{M}}$, let $\rho(f)$ be the least ordinal $\alpha$ such that $f\in V^{\mathfrak{M}}_{\alpha+1}$. Thus, for every $i\in M$, $f(i)\subseteq \mathcal{M}_{\rho(f)}.$ We define $e:V^{\mathfrak{M}}\to V^{\mathbb{B}}$ by transfinite recursion on $\rho(f)$. If $f_{\varnothing}$ is the constant function with value $\varnothing$, put $e(f_{\varnothing})=\varnothing.$ More generally, for $f\in V^{\mathfrak{M}}$, define $e(f)$ by

\[
\operatorname{dom}(e(f))
 =
\{\,e(h)\mid h\in V^{\mathfrak{M}}_{\rho(f)}\,\},
\]

\noindent
and, for $h\in V^{\mathfrak{M}}_{\rho(f)}$, put

\[
e(f)(e(h))
 =
\llbracket h\in f\rrbracket_{\mathfrak{M}}.
\]

\noindent
Equivalently,
\[
e(f)
 =
\{\,\langle e(h),\llbracket h\in f\rrbracket_{\mathfrak{M}}\rangle
   \mid h\in V^{\mathfrak{M}}_{\rho(f)}\,\}.
\]

\noindent
The induction below shows that $e$ is an embedding.

We shall use the following elementary observation. For every ordinal $\alpha$, every $i\in M$, and every $x\in \mathcal{M}_{\alpha}$, there is some $h\in V^{\mathfrak{M}}_{\alpha}$ such that $h(i)=x.$ Indeed, if  $\alpha=0$, the claim is vacuous. If $\alpha=\beta+1$, then $x\in \mathcal{M}_{\beta+1}$, so the constant function with value $x$ belongs to $V^{\mathfrak{M}}_{\beta+1}$. If $\alpha$ is a limit ordinal, then $x\in \mathcal{M}_{\beta}$ for some $\beta<\alpha$, and the claim follows from the induction hypothesis.

We now prove, by induction on $\alpha$, that for all

$f,g\in V^{\mathfrak{M}}_{\alpha}$,
\[
\llbracket f=g\rrbracket_{\mathfrak{M}}
 =
\llbracket e(f)=e(g)\rrbracket_{\mathbb{B}}
\]

\noindent
and

\[
\llbracket f\in g\rrbracket_{\mathfrak{M}}
 =
\llbracket e(f)\in e(g)\rrbracket_{\mathbb{B}}.
\]

\noindent
The case $\alpha=0$ is vacuous. The limit case follows immediately from the induction hypothesis. So suppose the result holds at stage $\alpha$, and let $f,g\in V^{\mathfrak{M}}_{\alpha+1}$. Since the base case is covered, assume $\rho(f), \rho(g) > 0$, and let $\mu = \rho(f)-1$ and $\nu = \rho(g)-1$. Thus $\mu, \nu\leq \alpha$.

First we prove the equality clause. By the definition of equality in $V^{\mathbb B}$, and by the definition of
$e$, we have

\[
\begin{aligned}
\llbracket e(f)=e(g)\rrbracket_{\mathbb{B}}
 &=
 \bigcap_{y\in V^{\mathfrak{M}}_{\mu}}
 \Bigl(
   \llbracket y\in f\rrbracket_{\mathfrak{M}}
   \Rightarrow
   \llbracket e(y)\in e(g)\rrbracket_{\mathbb{B}}
 \Bigr) \\
 &\quad\cap
 \bigcap_{y\in V^{\mathfrak{M}}_{\nu}}
 \Bigl(
   \llbracket y\in g\rrbracket_{\mathfrak{M}}
   \Rightarrow
   \llbracket e(y)\in e(f)\rrbracket_{\mathbb{B}}
 \Bigr).
\end{aligned}
\]

\noindent
Since $y\in V^{\mathfrak{M}}_{\mu}\subseteq V^{\mathfrak{M}}_{\alpha}$, the induction hypothesis gives

\[
\llbracket e(y)\in e(g)\rrbracket_{\mathbb{B}}
 =
\llbracket y\in g\rrbracket_{\mathfrak{M}},
\]

\noindent
and similarly,

\[
\llbracket e(y)\in e(f)\rrbracket_{\mathbb{B}}
 =
\llbracket y\in f\rrbracket_{\mathfrak{M}}.
\]

\noindent
Hence

\[
\begin{aligned}
\llbracket e(f)=e(g)\rrbracket_{\mathbb{B}}
 &=
 \bigcap_{y\in V^{\mathfrak{M}}_{\mu}}
 \Bigl(
   \llbracket y\in f\rrbracket_{\mathfrak{M}}
   \Rightarrow
   \llbracket y\in g\rrbracket_{\mathfrak{M}}
 \Bigr) \\
 &\quad\cap
 \bigcap_{y\in V^{\mathfrak{M}}_{\nu}}
 \Bigl(
   \llbracket y\in g\rrbracket_{\mathfrak{M}}
   \Rightarrow
   \llbracket y\in f\rrbracket_{\mathfrak{M}}
 \Bigr).
\end{aligned}
\]

\noindent
Now let $i\in M$. Then $i$ belongs to the right-hand side if and only if
\[
\forall y\in V^{\mathfrak{M}}_{\mu}\,
\bigl(y(i)\in f(i)\Rightarrow y(i)\in g(i)\bigr)
\]

\noindent
and

\[
\forall y\in V^{\mathfrak{M}}_{\nu}\,
\bigl(y(i)\in g(i)\Rightarrow y(i)\in f(i)\bigr).
\]

\noindent
Since $f(i)\subseteq \mathcal{M}_{\mu}$ and $g(i)\subseteq \mathcal{M}_{\nu}$, the observation above implies that this is equivalent to $f(i)\subseteq g(i) \mbox{ and } g(i)\subseteq f(i)$. Thus it is equivalent to $f(i)=g(i)$. Therefore

\[
\llbracket e(f)=e(g)\rrbracket_{\mathbb{B}}
 =
\{\,i\in M\mid f(i)=g(i)\,\}
 =
\llbracket f=g\rrbracket_{\mathfrak{M}}.
\]

We now prove the membership clause. By the definition of membership in

$V^{\mathbb{B}}$,
\[
\begin{aligned}
\llbracket e(f)\in e(g)\rrbracket_{\mathbb{B}}
 &=
 \bigcup_{y\in V^{\mathfrak{M}}_{\nu}}
 \Bigl(
   e(g)(e(y))
   \cap
   \llbracket e(f)=e(y)\rrbracket_{\mathbb{B}}
 \Bigr) \\
 &=
 \bigcup_{y\in V^{\mathfrak{M}}_{\nu}}
 \Bigl(
   \llbracket y\in g\rrbracket_{\mathfrak{M}}
   \cap
   \llbracket f=y\rrbracket_{\mathfrak{M}}
 \Bigr).
\end{aligned}
\]

\noindent
Hence

\[
\begin{aligned}
\llbracket e(f)\in e(g)\rrbracket_{\mathbb{B}}
 &=
 \bigcup_{y\in V^{\mathfrak{M}}_{\nu}}
 \Bigl(
   \{\,i\in M\mid y(i)\in g(i)\,\}
   \cap
   \{\,i\in M\mid f(i)=y(i)\,\}
 \Bigr) \\
 &=
 \{\,i\in M\mid f(i)\in g(i)\,\} \\
 &=
 \llbracket f\in g\rrbracket_{\mathfrak{M}}.
\end{aligned}
\]

The nontrivial inclusion in the penultimate equality obtains as follows. If $f(i)\in g(i)$, then, since $g(i)\subseteq \mathcal{M}_{\nu}$, we have $f(i)\in \mathcal{M}_{\nu}$. By the observation above, there is some $y\in V^{\mathfrak{M}}_{\nu}$ such that $y(i)=f(i)$. Hence $i\in \llbracket y\in g\rrbracket_{\mathfrak{M}} \cap \llbracket f=y\rrbracket_{\mathfrak{M}},$ as required.

Finally, $e$ is injective. Indeed, if $e(f)=e(g)$, then

\[
M
 =
\llbracket e(f)=e(g)\rrbracket_{\mathbb{B}}
 =
\llbracket f=g\rrbracket_{\mathfrak{M}},
\]

\noindent
so $f(i)=g(i)$ for every $i\in M$. Hence $f=g$.
\end{proof}

Notice, once again, that the domain of this natural embedding is restricted to bounded arbitrary sets. Therefore, the unbounded arbitrary sets---those in $\mathfrak{M} \setminus V^\mathfrak{M}$---have no image in $V^{\mathcal{P}(M)}$. Thus, Boolean-valued sets, as Horsten has considered, may not only be taken to represent arbitrary objects, but actually may be seen to correspond to bounded arbitrary sets. Indeed, Horsten’s argument, if understood as identifying Boolean-valued sets with representations of arbitrary sets, accurately frames the bounded arbitrary sets. Nevertheless, it cannot accommodate general arbitrary sets. 

Suppose now we extend the Boolean-valued model $V^{\mathcal{P}(M)}$ by allowing functions to have the whole of $V^{\mathcal{P}(M)}$ as their domains, that is, define

\begin{center}
$\mathcal{V}^{\mathcal{P}(M)}\ =\ \{ f : X \to \mathcal{P}(M) \mid X \subseteq V^{\mathcal{P}(M)}\}$.
\end{center}

\noindent
In this way, $\mathcal{V}^{\mathcal{P}(M)}$ has proper class-sized collections---from the internal point of view of $V^{\mathcal{P}(M)}$---, such as the Boolean-valued set $\{\langle \alpha, M \rangle \mid V^{\mathcal{P}(M)} \models \alpha\ \text{is an ordinal}\}$, the collection of ordinals of $V^{\mathcal{P}(M)}$. Clearly, $V^{\mathcal{P}(M)} \subseteq \mathcal{V}^{\mathcal{P}(M)}$.

\begin{theorem}
There is a Boolean-valued embedding $\mathfrak{M} \hookrightarrow \mathcal{V}^{\mathcal{P}(M)}$.
\end{theorem}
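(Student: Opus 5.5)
We extend the embedding $e$ of Theorem~\ref{theorem embedding boolean model} so that it also covers unbounded arbitrary sets. The domain of $e(f)$ will no longer be a rank-bounded initial segment. Instead it will be the image under $e$ of all of $V^{\mathfrak{M}}$, which $\mathcal{V}^{\mathcal{P}(M)}$ permits because domains may be arbitrary subsets of $V^{\mathcal{P}(M)}$.

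For every $f\in\mathfrak{M}=M^M$ we define
\[
E(f)=\{\,\langle e(h),\llbracket h\in f\rrbracket_{\mathfrak{M}}\rangle \mid h\in V^{\mathfrak{M}}\,\}.
\]
This is a function from $e[V^{\mathfrak{M}}]\subseteq V^{\mathcal{P}(M)}$ into $\mathcal{P}(M)$, and it is well defined because $e$ is injective. For bounded $f$ we should check that $E(f)$ and $e(f)$ are Boolean-equivalent in $V^{\mathcal{P}(M)}$. The extra pairs $\langle e(h), \llbracket h\in f\rrbracket\rangle$ with $h$ of higher rank are redundant: at each $i$ with $h(i)\in f(i)$ we have $h(i)\in\mathcal{M}_{\rho(f)}$, and by the observation in the previous proof some $y\in V^{\mathfrak{M}}_{\rho(f)}$ has $y(i)=h(i)$. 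If one prefers, $e$ can instead be redefined on bounded sets with this uniform domain.

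The atomic clauses of the Boolean evaluation then have to be extended to $\mathcal{V}^{\mathcal{P}(M)}$ by the same recursive formulas. For $\llbracket x\in E(g)\rrbracket$ this is a single union over $\operatorname{dom}(E(g))\subseteq V^{\mathcal{P}(M)}$, so it only uses equality between an arbitrary $x$ and elements of $V^{\mathcal{P}(M)}$. Equality between two members of $\mathcal{V}^{\mathcal{P}(M)}$ is then given by the displayed clause, with inner memberships of elements of $V^{\mathcal{P}(M)}$, so no further recursion arises. The proof then verifies, for all $f,g\in\mathfrak{M}$,
\[
\llbracket E(f)=E(g)\rrbracket=\llbracket f=g\rrbracket_{\mathfrak{M}},\qquad
\llbracket E(f)\in E(g)\rrbracket=\llbracket f\in g\rrbracket_{\mathfrak{M}}.
\]

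\emph{Equality.} Exactly as in the earlier proof, the value reduces to $\bigcap_{y\in V^{\mathfrak{M}}}(\llbracket y\in f\rrbracket\Leftrightarrow\llbracket y\in g\rrbracket)$, using Theorem~\ref{theorem embedding boolean model} for $\llbracket e(y)\in E(g)\rrbracket=\llbracket y\in g\rrbracket$. This in turn needs the membership clause when the left argument is bounded. Pointwise at $i$, the key observation is now global: every $x\in M$ equals $y(i)$ for some bounded $y$, namely the constant $\delta(x)$. Hence the intersection is $\{i\mid f(i)=g(i)\}$ by extensionality in $\mathcal{M}$.

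\emph{Membership.} We have $\llbracket E(f)\in E(g)\rrbracket=\bigcup_{y\in V^{\mathfrak{M}}}(\llbracket y\in g\rrbracket\cap\llbracket E(f)=e(y)\rrbracket)$. If $f(i)\in g(i)$, the witness $y=\delta(f(i))$ works at $i$, and conversely any such $y$ gives $f(i)\in g(i)$. Injectivity of $E$ then follows from the equality clause as before.

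The main obstacle is that when $f$ is unbounded, the value $\llbracket E(f)=e(y)\rrbracket$ compares a genuinely class-sized object with a set-sized one, and the earlier induction on rank is unavailable. I would handle this by avoiding induction altogether. Both clauses are computed directly, using only the already-established statement of Theorem~\ref{theorem embedding boolean model} on bounded arguments, together with the fact that the constant functions $\delta(x)$ are bounded and exhaust all values. So the order is: first membership with bounded left argument, then equality in general, then membership in general. I also need to check that the displayed equality clause is well defined in $\mathcal{V}^{\mathcal{P}(M)}$ when both domains are $e[V^{\mathfrak{M}}]$, which holds because all inner terms live in $V^{\mathcal{P}(M)}$.
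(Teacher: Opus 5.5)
Your proposal is correct and is essentially the paper's proof written out in full. The paper just extends $e$ ``naturally'' to all of $\mathfrak{M}$ and reruns the argument of Theorem~\ref{theorem embedding boolean model}. Your $E(f)$ with domain $e[V^{\mathfrak{M}}]$ is that extension, and your direct computation is the one extra top-level step of that argument, with the constants $\delta(x)$ as witnesses; you also make explicit, as the paper does not, why the atomic values on $\mathcal{V}^{\mathcal{P}(M)}$ need no new recursion and in which order the clauses must be checked.

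One caveat applies to both you and the paper, since you both rely on the cited $e$. As literally written, with the pointwise $V^{\mathfrak{M}}_{\lambda}$, its recursion is circular: the paper's own example ($f(n)=n$ for $n\in\omega$, $f(i)=\varnothing$ otherwise) has $\rho(f)=\omega$ and $f\in V^{\mathfrak{M}}_{\omega}$, so $e(f)$ would lie in its own domain. Reading limit levels as unions, i.e.\ $\operatorname{dom}(e(f))=\{e(h)\mid \rho(h)<\rho(f)\}$, fixes this, and with that reading (using $\rho(\delta(x))=\operatorname{rank}(x)$) your argument goes through verbatim.
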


\begin{proof}
Just consider the same embedding $e$ as defined in the proof of Theorem \ref{theorem embedding boolean model}, naturally extended to the whole $\mathfrak{M}$. The same argument shows $e$ is an embedding, having $\mathfrak{M}$ as its domain.
\end{proof}

Thus, we may conclude, the unbounded arbitrary sets---those in $\mathfrak{M} \setminus V^\mathfrak{M}$---occupy a position analogous to proper classes in NBG: they exist as objects of $\mathfrak{M}$ but do not lie at any finite rank. Unlike proper classes, however, they are first-order objects of $\mathfrak{M}$ that can serve as values of quantified variables within $\mathfrak{M}$.

We can now take stock of the relationship to Horsten's argument precisely. Horsten's construction identifies Boolean-valued sets in $V^\mathbb{B}$ with arbitrary objects, using the Boolean algebra $\mathbb{B}$ as the state space. Theorem \ref{theorem embedding boolean model} shows that this identification actually frames the bounded arbitrary sets in $\mathfrak{M}$. What Horsten's identification lacks, for arbitrary sets in general, is the unbounded arbitrary sets. In this sense, our construction extends the scope of Horsten's proposal by including unbounded arbitrary sets, such as $\mathrm{id}_{\mathcal{M}}$, whose range is all of $M$.

\section{Ultrapower reductions and comparison profiles}\label{sec:three}

Section \ref{sec:two} assigns to each formula $\varphi$ a truth value $\llbracket \varphi \rrbracket \subseteq M$, that is, the set of indices at which $\varphi$ holds in the ground model $M$ (Proposition \ref{prop:fundamental}). A standard way to recover an ordinary two-valued semantics from such truth values is to choose an ultrafilter $\mathcal{U}$ on $M$ and declare a formula ``true'' exactly when its truth set is $\mathcal{U}$-large. This yields the familiar ultrapower construction. We record it mainly to highlight what changes (and what does \emph{not} change) about ordinals and cardinals when one passes from the full $\mathcal{P}(M)$-valued model to a two-valued quotient.

\subsection{Ultrafilter reduction}

\begin{definition}
A (proper) \emph{filter} on $M$ is a nonempty family $\mathcal{F} \subseteq \mathcal{P}(M)$ closed under finite intersections and supersets, and not containing $\varnothing$. An \emph{ultrafilter} $\mathcal{U}$ is a maximal proper filter; equivalently, for every $A \subseteq M$ we have $A \in \mathcal{U}$ or $M \setminus A \in \mathcal{U}$.
\end{definition}

Fix an ultrafilter $\mathcal{U}$ on $M$. Define an equivalence relation $\equiv_\mathcal{U}$ on $\mathfrak{M}$ by

\[
f \equiv_\mathcal{U} g \quad :\Longleftrightarrow \quad \llbracket f = g \rrbracket \in \mathcal{U},
\]

\noindent
that is, $\{i \in M \mid f(i) = g(i)\} \in \mathcal{U}$. Let $\mathfrak{M} / \mathcal{U} \coloneqq \mathfrak{M} / \equiv_\mathcal{U}$, and write $[f]_\mathcal{U}$ for the equivalence class of $f$. Define membership on $\mathfrak{M} / \mathcal{U}$ by

\[
[f]_\mathcal{U} \in_\mathcal{U} [g]_\mathcal{U} \quad :\Longleftrightarrow \quad \llbracket f \in g \rrbracket \in \mathcal{U}
\]

\noindent
(that is, $\{i \in M \mid f(i) \in g(i)\}\in \mathcal{U}$). Equality in $\mathfrak{M} / \mathcal{U}$ is just equality of $\equiv_\mathcal{U}$-classes.

\begin{theorem}\label{thm:los}
For every $\mathcal{L}_\in$-formula $\varphi(\vec{x})$ and $\vec{a} \in \mathfrak{M}^n$,
\[
\mathfrak{M} / \mathcal{U} \models \varphi([\vec{a}]_\mathcal{U})
\quad \Longleftrightarrow \quad
\llbracket \varphi(\vec{a}) \rrbracket\in \mathcal{U}.
\]
\end{theorem}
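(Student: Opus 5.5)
This is Łoś's theorem for the ultrapower $\mathfrak{M}/\mathcal{U}$. The plan is to prove it by induction on the complexity of $\varphi$, simultaneously for all tuples of parameters. Using Proposition~\ref{prop:fundamental}, I will rewrite the right-hand side as $\{i\in M\mid \mathcal{M}\models\varphi(\vec a(i))\}\in\mathcal{U}$. Before the induction I will check that the structure is well defined. First, $\equiv_\mathcal{U}$ is an equivalence relation: reflexivity holds because $M\in\mathcal{U}$, symmetry is immediate, and transitivity holds because $\llbracket f=g\rrbracket\cap\llbracket g=h\rrbracket\subseteq\llbracket f=h\rrbracket$ and $\mathcal{U}$ is closed under intersections and supersets. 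Second, $\in_\mathcal{U}$ does not depend on representatives, since $\llbracket f=f'\rrbracket\cap\llbracket g=g'\rrbracket\cap\llbracket f\in g\rrbracket\subseteq\llbracket f'\in g'\rrbracket$. The atomic cases of the induction are then exactly the definitions of $\equiv_\mathcal{U}$ and $\in_\mathcal{U}$.

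For the propositional steps it suffices to treat $\neg$ and $\wedge$, with the other connectives handled through their Boolean definitions in Definition~\ref{def:truth-values}.
\begin{itemize}
\item The negation step uses the ultrafilter property: $M\setminus A\in\mathcal{U}$ iff $A\notin\mathcal{U}$. The forward direction is properness, since $\varnothing\notin\mathcal{U}$; the converse is maximality.
\item The conjunction step uses $A\cap B\in\mathcal{U}$ iff $A\in\mathcal{U}$ and $B\in\mathcal{U}$, which follows from closure under intersections and supersets.
\end{itemize}

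The main step is the existential quantifier, $\varphi=\exists x\,\psi(x,\vec a)$.

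If $\mathfrak{M}/\mathcal{U}\models\exists x\,\psi(x,[\vec a])$, pick a witness $[u]$. The induction hypothesis gives $\llbracket\psi(u,\vec a)\rrbracket\in\mathcal{U}$. This set is contained in $\bigcup_u\llbracket\psi(u,\vec a)\rrbracket=\llbracket\exists x\,\psi\rrbracket$, so the latter is in $\mathcal{U}$ by upward closure.

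Conversely, suppose $A=\llbracket\exists x\,\psi(x,\vec a)\rrbracket\in\mathcal{U}$. By Proposition~\ref{prop:fundamental}, for each $i\in A$ there is $m_i\in M$ with $\mathcal{M}\models\psi(m_i,\vec a(i))$. Using Choice in the metatheory, define $u\in M^M$ by $u(i)=m_i$ for $i\in A$ and $u(i)=\varnothing$ otherwise. Then $A\subseteq\llbracket\psi(u,\vec a)\rrbracket$, so this set lies in $\mathcal{U}$, and the induction hypothesis yields $\mathfrak{M}/\mathcal{U}\models\psi([u],[\vec a])$.

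This "gluing" of witnesses into a single total function is the only non-routine point. It works because the domain $\mathfrak{M}$ is the full direct power, so it contains every such function; this fullness is what makes $\llbracket\exists\rrbracket$ behave like a mixing-complete value. The universal quantifier then follows by writing $\forall$ as $\neg\exists\neg$.
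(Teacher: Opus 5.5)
Your proof is correct and follows the same route as the paper: the standard \L o\'{s} induction it cites from Chang--Keisler. The negation step uses the ultrafilter property, and the existential step glues pointwise witnesses into a single member of the full direct power via Proposition~\ref{prop:fundamental} and Choice in the metatheory. You supply the details the paper's sketch leaves to the reference, namely well-definedness of $\equiv_\mathcal{U}$ and $\in_\mathcal{U}$ and the individual inductive cases.
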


\begin{proof}[Proof sketch]
This is the standard \L o\'{s} induction on formulas, using that our truth values satisfy $\llbracket\neg\varphi\rrbracket=M\setminus\llbracket\varphi\rrbracket$ and $\llbracket\exists x\,\psi(x)\rrbracket=\bigcup\limits_{u\in\mathfrak{M}}\llbracket\psi(u)\rrbracket$. See, for example, \citep[Ch.~4]{ChangKeisler1990}.
\end{proof}

\begin{corollary}\label{cor:zfc-ultra}
If $\mathcal{M} \models \mathrm{ZFC}$, then $\mathfrak{M} /\mathcal{U} \models \mathrm{ZFC}$.
\end{corollary}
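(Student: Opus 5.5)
The argument combines the Łoś theorem (Theorem~\ref{thm:los}) with the pointwise evaluation of truth values (Proposition~\ref{prop:fundamental}), following the pattern of Corollary~\ref{cor:zfc-transfer}.

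Fix an axiom of ZFC, or a single instance of one of the schemes, and call it $\varphi$. Since $\varphi$ is a sentence with no parameters, Proposition~\ref{prop:fundamental} gives
\[
\llbracket \varphi \rrbracket=\{i\in M\mid \mathcal{M}\models\varphi\}.
\]
The hypothesis $\mathcal{M}\models\mathrm{ZFC}$ then yields $\llbracket\varphi\rrbracket=M$.

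Every ultrafilter is a filter, so it is closed under supersets and nonempty. Hence it contains the top element, and $M\in\mathcal{U}$. Applying Theorem~\ref{thm:los} to $\varphi$ with the empty tuple of parameters gives $\mathfrak{M}/\mathcal{U}\models\varphi$. As $\varphi$ was an arbitrary axiom or scheme instance, $\mathfrak{M}/\mathcal{U}\models\mathrm{ZFC}$.

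An alternative route is through Corollary~\ref{cor:zfc-transfer}: it gives $\mathfrak{M}\models\varphi$, which by definition means $\llbracket\varphi\rrbracket=M\in\mathcal{U}$, and the conclusion again follows from Theorem~\ref{thm:los}.

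The only point needing care is that Theorem~\ref{thm:los} really applies to sentences. Scheme instances are formulas whose parameters are universally quantified, so they are sentences in the language $\mathcal{L}_\in$, and the theorem covers them with $n=0$.

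The proof also needs $\equiv_\mathcal{U}$ to be a congruence for $\in_\mathcal{U}$, so that the quotient structure is well defined. This is part of what Theorem~\ref{thm:los} presupposes. It can be checked directly: if $\llbracket f=f'\rrbracket$, $\llbracket g=g'\rrbracket$ and $\llbracket f\in g\rrbracket$ all lie in $\mathcal{U}$, their intersection lies in $\mathcal{U}$ and is contained in $\llbracket f'\in g'\rrbracket$, which therefore also lies in $\mathcal{U}$.

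No step here is a real obstacle; the weight of the argument rests on the Łoś induction already given.
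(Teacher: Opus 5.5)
Your proposal is correct and matches the paper's proof. The paper argues exactly along your ``alternative route'': Corollary~\ref{cor:zfc-transfer} gives each axiom truth value $M\in\mathcal{U}$, and then Theorem~\ref{thm:los} applies. Your main route only inlines that corollary's one-line derivation from Proposition~\ref{prop:fundamental}, and your congruence check is a correct extra detail that the paper leaves implicit.
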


\begin{proof}
Since $\mathcal{M}\models\mathrm{ZFC}$, Corollary~\ref{cor:zfc-transfer} gives $\mathfrak{M}\models\mathrm{ZFC}$. Thus each axiom (or scheme instance) has truth value $M$, which belongs to $\mathcal{U}$. By Theorem~\ref{thm:los}, it follows that $\mathfrak{M}/\mathcal{U}\models\mathrm{ZFC}$.
\end{proof}

\subsection{Profiles for arbitrary ordinals}

Let $f, g \in \mathbf{On}^{\mathfrak{M}}$. Define the three truth sets

\[
S^{<}_{f, g} \coloneqq \llbracket f \in g \rrbracket, \qquad S^{=}_{f, g} \coloneqq \llbracket f = g \rrbracket, \qquad S^{>}_{f, g} \coloneqq \llbracket g \in f \rrbracket.
\]

\noindent
We call the triple $\langle S^{<}_{f,g},S^{=}_{f,g},S^{>}_{f,g} \rangle$ the \emph{comparison profile} of $\langle f, g \rangle$.

\begin{proposition}\label{prop:ordinal-profile}
For any $f, g \in \mathbf{On}^{\mathfrak{M}}$, the sets $S^{<}_{f, g}, S^{=}_{f, g}, S^{>}_{f, g}$ are pairwise disjoint and

\[
S^{<}_{f, g}\ \cup\ S^{=}_{f, g}\ \cup\ S^{>}_{f, g}\ =\ M.
\]
\end{proposition}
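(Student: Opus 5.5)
The plan is to reduce everything to the ground model via Proposition~\ref{prop:fundamental}. That result rewrites the three truth sets coordinatewise:
\[
S^{<}_{f,g}=\{i\in M\mid f(i)\in g(i)\},\qquad
S^{=}_{f,g}=\{i\in M\mid f(i)=g(i)\},\qquad
S^{>}_{f,g}=\{i\in M\mid g(i)\in f(i)\}.
\]
Since $f,g\in\mathbf{On}^{\mathfrak{M}}$, the description of arbitrary ordinals given earlier shows that $f(i)$ and $g(i)$ are ordinals of $\mathcal{M}$ for every $i\in M$. Both claims then become statements about a single pair of ground-model ordinals at a fixed coordinate $i$.

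For the covering claim, fix $i\in M$. Ordinals of the transitive model $\mathcal{M}$ are genuine ordinals, and $\mathrm{Ord}$ is absolute. The trichotomy of ordinals in $\mathcal{M}$ (or in the metatheory, which agrees) therefore gives $f(i)\in g(i)$, $f(i)=g(i)$, or $g(i)\in f(i)$. Hence $i$ lies in one of the three sets, and the union is $M$.

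Alternatively, one can argue internally. The trichotomy disjunction displayed before Proposition~\ref{prop:incomparable-ordinals} is a theorem of ZFC, so by Corollary~\ref{cor:zfc-transfer} it has truth value $M$ in $\mathfrak{M}$. Instantiating at $f,g$ and using $\llbracket\mathrm{Ord}(f)\wedge\mathrm{Ord}(g)\rrbracket=M$ together with the clauses for $\vee$ and $\to$ in Definition~\ref{def:truth-values} yields $S^{<}\cup S^{=}\cup S^{>}=M$.

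For pairwise disjointness, again fix $i$ and use Foundation in $\mathcal{M}$. Suppose $i\in S^{<}\cap S^{=}$. Then $f(i)\in g(i)=f(i)$, which is impossible since no set is a member of itself; the case $S^{=}\cap S^{>}$ is symmetric. Suppose $i\in S^{<}\cap S^{>}$. Then $f(i)\in g(i)\in f(i)$, a $2$-cycle, which Foundation (applied to $\{f(i),g(i)\}$) excludes. Equivalently, one may use that $\in$ is a strict order on ordinals, so irreflexivity and asymmetry handle these cases.

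None of the steps is a real obstacle. The only point needing care is that the comparison is made coordinatewise inside $\mathcal{M}$, so the relevant facts are the ground-model ones. Proposition~\ref{prop:fundamental} licenses exactly this, and the transitivity of $\mathcal{M}$ makes ordinals, membership and equality absolute.
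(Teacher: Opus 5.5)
Your proof is correct and follows essentially the same route as the paper: fix a coordinate $i$, note that $f(i),g(i)$ are ordinals of $\mathcal{M}$, so exactly one of $f(i)\in g(i)$, $f(i)=g(i)$, $g(i)\in f(i)$ holds, and transfer this via Proposition~\ref{prop:fundamental}. The only difference is that you split ``exactly one'' into a covering part (trichotomy) and a disjointness part (Foundation, which in fact works for any sets, not only ordinals), whereas the paper gets both at once from the linearity of the ordinals of $\mathcal{M}$ under membership.
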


\begin{proof}
Fix $i \in M$. Since $f(i), g(i) \in \mathbf{On}^\mathcal{M}$ and $\mathcal{M} \models \mathrm{ZFC}$, the ordinals of $\mathcal{M}$ are linearly ordered by membership, so exactly one of $f(i) \in g(i)$, $g(i) = f(i)$, or $g(i) \in f(i)$ holds in $\mathcal{M}$. By Proposition \ref{prop:fundamental} this is exactly the statement that $i$ lies in exactly one of the three truth sets.
\end{proof}

In a two-valued model, trichotomy forces one of these relations to hold outright. Here, by contrast, a profile may have all three components nonempty. The crisp relations introduced in Subsection \ref{subsec:crisp} correspond to \emph{degenerate} profiles:

\[
f \in_\Box g\ \Longleftrightarrow\ S^{<}_{f, g} = M,
\qquad
f =_\Box g\ \Longleftrightarrow\ S^{=}_{f, g} = M,
\qquad
g \in_\Box f\ \Longleftrightarrow\ S^{>}_{f, g} = M.
\]

\noindent
Proposition \ref{prop:incomparable-ordinals} provides a concrete example of a genuinely nontrivial profile.

Passing to an ultrapower collapses profiles: by ultrafilter maximality, exactly one of the three sets is $\mathcal{U}$-large. Combining Proposition \ref{prop:ordinal-profile} with Theorem \ref{thm:los} yields:

\begin{corollary}\label{cor:ordinal-profile-ultra}
For any $f, g \in \mathbf{On}^{\mathfrak{M}}$, exactly one of $[f]_\mathcal{U} \in_\mathcal{U} [g]_\mathcal{U}$, $[f]_\mathcal{U} = [g]_\mathcal{U}$, or $[g]_\mathcal{U} \in_\mathcal{U} [f]_\mathcal{U}$ holds in $\mathfrak{M} / \mathcal{U}$. Moreover, which one holds is determined by which component of the profile lies in $\mathcal{U}$.
\end{corollary}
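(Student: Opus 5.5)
The plan is to combine Proposition~\ref{prop:ordinal-profile} with Theorem~\ref{thm:los} and the ultrafilter property. Fix $f,g\in\mathbf{On}^{\mathfrak{M}}$. Proposition~\ref{prop:ordinal-profile} says that the three truth sets $S^{<}_{f,g},S^{=}_{f,g},S^{>}_{f,g}$ partition $M$. The first step is the purely combinatorial fact that an ultrafilter contains exactly one block of any partition of $M$ into three pieces.

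\emph{At least one block.} Suppose $S^{<}_{f,g}\notin\mathcal{U}$. Then its complement $S^{=}_{f,g}\cup S^{>}_{f,g}$ is in $\mathcal{U}$. If moreover $S^{=}_{f,g}\notin\mathcal{U}$, then $M\setminus S^{=}_{f,g}\in\mathcal{U}$. Intersecting these two sets leaves $S^{>}_{f,g}$, which is therefore in $\mathcal{U}$ by closure under finite intersections.

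\emph{At most one block.} If two of the blocks were in $\mathcal{U}$, their intersection, which is $\varnothing$, would be in $\mathcal{U}$. This contradicts properness.

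The second step translates each component into a statement about $\mathfrak{M}/\mathcal{U}$. By definition, $[f]_\mathcal{U}\in_\mathcal{U}[g]_\mathcal{U}$ iff $\llbracket f\in g\rrbracket=S^{<}_{f,g}\in\mathcal{U}$. Likewise, $[f]_\mathcal{U}=[g]_\mathcal{U}$ iff $f\equiv_\mathcal{U}g$ iff $S^{=}_{f,g}\in\mathcal{U}$. Finally, $[g]_\mathcal{U}\in_\mathcal{U}[f]_\mathcal{U}$ iff $S^{>}_{f,g}\in\mathcal{U}$. Equivalently, one can apply Theorem~\ref{thm:los} to the atomic formulas $x\in y$, $x=y$, $y\in x$. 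Combining this with the first step shows that exactly one of the three relations holds, namely the one whose profile component lies in $\mathcal{U}$. This gives the ``moreover'' clause directly.

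As a consistency check, note that $[f]_\mathcal{U},[g]_\mathcal{U}$ are ordinals of $\mathfrak{M}/\mathcal{U}$. Indeed, $\llbracket \mathrm{Ord}(f)\rrbracket=M\in\mathcal{U}$, so Theorem~\ref{thm:los} applies. The result therefore agrees with internal trichotomy in the two-valued ZFC model given by Corollary~\ref{cor:zfc-ultra}, although we do not need this fact.

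There is no real obstacle. The only point needing care is to use properness of $\mathcal{U}$ and pairwise disjointness from Proposition~\ref{prop:ordinal-profile} for the ``at most one'' direction, and not just maximality.
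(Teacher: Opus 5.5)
Your proposal is correct and follows the paper's route: the paper likewise obtains the corollary by combining the partition of $M$ from Proposition~\ref{prop:ordinal-profile} with the ultrafilter property and Theorem~\ref{thm:los}, which for atomic formulas reduces to the definitions of $\equiv_\mathcal{U}$ and $\in_\mathcal{U}$. Your explicit use of properness and pairwise disjointness for the ``at most one'' direction spells out what the paper compresses into the phrase ``by ultrafilter maximality.''
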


\subsection{Profiles for arbitrary cardinals and ``crisp'' intermediates}

For cardinals $\kappa, \lambda \in \mathbf{Card}^{\mathfrak{M}}$ define

\[
T^{<}_{\kappa, \lambda} \coloneqq \llbracket \kappa < \lambda \rrbracket,\qquad
T^{=}_{\kappa, \lambda} \coloneqq \llbracket \kappa = \lambda \rrbracket,\qquad
T^{>}_{\kappa, \lambda} \coloneqq \llbracket \lambda < \kappa \rrbracket,
\]

\noindent
and call $(T^{<}_{\kappa, \lambda}, T^{=}_{\kappa, \lambda}, T^{>}_{\kappa, \lambda})$ the \emph{cardinal comparison profile} of $(\kappa, \lambda)$.

\begin{proposition}\label{prop:cardinal-profile}
For any $\kappa, \lambda \in \mathbf{Card}^{\mathfrak{M}}$, the sets $T^{<}_{\kappa, \lambda}, T^{=}_{\kappa, \lambda}, T^{>}_{\kappa, \lambda}$ are pairwise disjoint and

\[
T^{<}_{\kappa, \lambda}\ \cup\ T^{=}_{\kappa, \lambda}\ \cup\ T^{>}_{\kappa, \lambda}\ =\ M.
\]
\end{proposition}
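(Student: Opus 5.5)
The plan is to mirror the proof of Proposition~\ref{prop:ordinal-profile}, working coordinate by coordinate and then reading the result back through Proposition~\ref{prop:fundamental}. Fix $\kappa,\lambda\in\mathbf{Card}^{\mathfrak{M}}$. By the characterization of $\mathbf{Card}^{\mathfrak{M}}$ established earlier, for every $i\in M$ both $\kappa(i)$ and $\lambda(i)$ are cardinals of $\mathcal{M}$, and in particular ordinals of $\mathcal{M}$. Here $<$ between cardinals is the ordinal order, equivalently cardinality comparison via injections; the two readings agree on cardinals in $\mathcal{M}\models\mathrm{ZFC}$.

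First, I will apply Proposition~\ref{prop:fundamental} to the formulas $x<y$, $x=y$, $y<x$. This gives
\[
T^{<}_{\kappa,\lambda}=\{i\in M\mid \mathcal{M}\models\kappa(i)<\lambda(i)\},
\]
and likewise for $T^{=}_{\kappa,\lambda}$ and $T^{>}_{\kappa,\lambda}$. This reduces both claims to a statement about each single coordinate $i$.

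Second, I will fix $i\in M$ and use that $\mathcal{M}\models\mathrm{ZFC}$. Cardinals of $\mathcal{M}$ are ordinals, and ordinals are linearly ordered by membership. So exactly one of $\kappa(i)<\lambda(i)$, $\kappa(i)=\lambda(i)$, $\lambda(i)<\kappa(i)$ holds in $\mathcal{M}$. Equivalently, under the cardinality reading, ZFC proves trichotomy of cardinal comparison, and the strict relations are irreflexive and asymmetric. It follows that $i$ lies in exactly one of the three truth sets.

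Since $i$ was arbitrary, the sets are pairwise disjoint and their union is $M$. Nothing here is a genuine obstacle. The only point needing care is that the relation $<$ in the definition of the $T$-sets is evaluated pointwise as the ground-model comparison, and that is exactly what Proposition~\ref{prop:fundamental} guarantees. One could alternatively note that the cardinal profile coincides with the ordinal profile $\langle S^{<}_{\kappa,\lambda},S^{=}_{\kappa,\lambda},S^{>}_{\kappa,\lambda}\rangle$ and then invoke Proposition~\ref{prop:ordinal-profile} directly.
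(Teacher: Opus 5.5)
Your proposal is correct and matches the paper's proof: fix $i\in M$, use that $\kappa(i),\lambda(i)$ are cardinals of $\mathcal{M}$, invoke ZFC's linear ordering of cardinals so exactly one of the three comparisons holds in $\mathcal{M}$, and read this back through Proposition~\ref{prop:fundamental}. Your remark that the cardinal profile coincides with the ordinal profile, with $<$ read as the ordinal order, is a valid shortcut, though the paper does not use it.
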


\begin{proof}
As in Proposition \ref{prop:ordinal-profile}, fix $i \in M$. Then $\kappa(i), \lambda(i)$ are cardinals of $\mathcal{M}$, and ZFC proves that cardinals are linearly ordered by $\leq$ (equivalently, by $<$ and $=$). Thus exactly one of $\kappa(i) < \lambda(i)$, $\kappa(i) = \lambda(i)$, or $\lambda(i) < \kappa(i)$ holds in $\mathcal{M}$. Apply Proposition \ref{prop:fundamental}.
\end{proof}

\medskip
In an ordinary two-valued model of ZFC, there is no cardinal strictly between an infinite cardinal $\kappa$ and its successor cardinal $\kappa^+$. The corresponding assertion for $\kappa$ and $2^\kappa$ requires $2^\kappa=\kappa^+$. By Proposition~\ref{prop:fundamental}, GCH holds internally in $\mathfrak{M}$ exactly when it holds in $\mathcal{M}$. Nevertheless, the crisp relation $\le_\Box$ allows an arbitrary cardinal to vary between two endpoint values without being globally equal to either.

\begin{definition}\label{def:crisp-between}
Let $\kappa, \eta, \tau \in \mathbf{Card}^{\mathfrak{M}}$. We say that $\tau$ is a \emph{crisp intermediate} between $\kappa$ and $\eta$ if

\[
\kappa \leq_\Box \tau \leq_\Box \eta \qquad\text{and}\qquad \mathfrak{M} \not\models \tau=\kappa\ \text{and}\ \mathfrak{M} \not\models \tau=\eta.
\]
\end{definition}

Theorem \ref{theorem gch fails} shows that crisp intermediates are ubiquitous. For instance, if the value of $\kappa \in \mathbf{Card}^{\mathfrak{M}}$ is infinite in every state, then $\kappa <_\Box 2^{\kappa, \bullet}$ and there are many crisp intermediates between $\kappa$ and $2^{\kappa, \bullet}$.
Notice that if $\tau(i)=\kappa(i)$ on $A\subseteq M$ and $\tau(i)=\eta(i)$ on $M\setminus A$, then $[\tau]_{\mathcal{U}}=[\kappa]_{\mathcal{U}}$ when $A\in\mathcal{U}$, and $[\tau]_{\mathcal{U}}=[\eta]_{\mathcal{U}}$ when $M\setminus A\in\mathcal{U}$. One of these alternatives always holds. 

For an arbitrary $\tau\in M^M$, collapse to a constant representative has the precise criterion
\[
[\tau]_{\mathcal{U}}=[\delta(a)]_{\mathcal{U}}
\quad\Longleftrightarrow\quad
\{i\in M\mid\tau(i)=a\}\in\mathcal{U}
\qquad(a\in M).
\]
If $\mathcal{U}$ is $\chi$-complete and the external cardinality of $\operatorname{ran}(\tau)$ is less than $\chi$, one of these fibres belongs to $\mathcal{U}$: otherwise their complements would be fewer than $\chi$ members of $\mathcal{U}$ with empty intersection. Without this completeness assumption, a mixture over infinitely many values need not collapse to a constant. For a particular $\tau$, the answer is determined by its fibres, as in the displayed equivalence. In particular, a principal ultrafilter always selects the value at its distinguished index.

\subsection{Canonical representatives and commuting with ultrapowers}

Internal equinumerosity (``there is a bijection'') is definable in $\mathcal{L}_\in$, so it is also computed pointwise in our semantics. Consequently, the pointwise cardinal assignment from Section \ref{sec:two} is a natural \emph{canonical representative} of each equinumerosity class.

Because ``$y = \mathrm{card}(x)$'' is ZFC-definable (as the least ordinal equinumerous to $x$), it also commutes with ultrapower reduction:

\begin{proposition}\label{prop:card-commutes}
For any $f \in \mathfrak{M}$,

\[
\mathfrak{M} / \mathcal{U} \models \mathrm{card}([f]_\mathcal{U}) = [\mathrm{card}^{\mathfrak{M}}(f)]_\mathcal{U}.
\]
\end{proposition}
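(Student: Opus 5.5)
The plan is to reduce the statement to Łoś's theorem (Theorem~\ref{thm:los}) applied to the defining formula of cardinality. Let $\psi(x,y)$ be the $\mathcal{L}_\in$-formula expressing ``$y$ is the least ordinal equinumerous with $x$'', that is,
\[
\psi(x,y)\;\equiv\;\mathrm{Ord}(y)\wedge\exists F\,\operatorname{Bij}(F,x,y)\wedge\forall z\,\bigl(\mathrm{Ord}(z)\wedge\exists G\,\operatorname{Bij}(G,x,z)\to y\leq z\bigr).
\]
ZFC proves $\forall x\,\exists! y\,\psi(x,y)$, and in any model of ZFC the term $\mathrm{card}(x)$ denotes the unique such $y$. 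So the assertion $\mathfrak{M}/\mathcal{U}\models \mathrm{card}([f]_\mathcal{U})=[\mathrm{card}^{\mathfrak{M}}(f)]_\mathcal{U}$ amounts to $\mathfrak{M}/\mathcal{U}\models\psi([f]_\mathcal{U},[g]_\mathcal{U})$, where $g\coloneqq\mathrm{card}^{\mathfrak{M}}(f)$. By Corollary~\ref{cor:zfc-ultra}, $\mathfrak{M}/\mathcal{U}$ is a ZFC model, so this unique witness is indeed what $\mathrm{card}$ denotes there.

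Next, I compute the truth value $\llbracket\psi(f,g)\rrbracket$ using Proposition~\ref{prop:fundamental}. It equals $\{i\in M\mid \mathcal{M}\models\psi(f(i),g(i))\}$. Since $g(i)=\mathrm{card}^{\mathcal{M}}(f(i))$ by the definition of $\mathrm{card}^{\mathfrak{M}}$ (equivalently, by Proposition~\ref{prop:definable-pointwise}), every $i$ lies in this set. Hence $\llbracket\psi(f,g)\rrbracket=M$.

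Finally, $M\in\mathcal{U}$ because $\mathcal{U}$ is a filter. Theorem~\ref{thm:los} then gives $\mathfrak{M}/\mathcal{U}\models\psi([f]_\mathcal{U},[g]_\mathcal{U})$. Uniqueness of the witness in the ZFC model $\mathfrak{M}/\mathcal{U}$ yields the displayed equation.

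The only point needing care is that Łoś's theorem must be applied to the unabbreviated formula $\psi$, including its unbounded quantifiers over bijections and ordinals, rather than to the term $\mathrm{card}$ directly. This is harmless, since Theorem~\ref{thm:los} holds for all $\mathcal{L}_\in$-formulas. It also means the result does not depend on $\mathcal{U}$ having any completeness property. No step is expected to be a real obstacle; the argument is a routine composition of the pointwise evaluation lemma with Łoś.
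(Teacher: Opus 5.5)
Your proposal is correct and follows essentially the same route as the paper: take a ZFC-definition $\psi(x,y)$ of ``$y=\mathrm{card}(x)$'', observe via Proposition~\ref{prop:fundamental} that $\llbracket\psi(f,\mathrm{card}^{\mathfrak{M}}(f))\rrbracket = M \in \mathcal{U}$, and transfer to $\mathfrak{M}/\mathcal{U}$ by Theorem~\ref{thm:los}. Your additional remarks, namely writing out $\psi$ and using Corollary~\ref{cor:zfc-ultra} to justify that $\mathrm{card}$ denotes the unique witness in the quotient, make explicit what the paper leaves implicit.
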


\begin{proof}
Let $\varphi(x, y)$ be a formula defining ``$y = \mathrm{card}(x)$'' in ZFC. Then $\mathfrak{M} \models \varphi(f,\mathrm{card}^{\mathfrak{M}}(f))$, so $\llbracket \varphi(f, \mathrm{card}^{\mathfrak{M}}(f)) \rrbracket = M$. By Theorem \ref{thm:los}, $\mathfrak{M} / \mathcal{U} \models \varphi([f]_\mathcal{U}, [\mathrm{card}^{\mathfrak{M}}(f)]_\mathcal{U})$, which is exactly the desired statement.
\end{proof}

\subsection{A cut controlling crisp mixing}

We now discuss with a simple way of \emph{constraining} crisp mixing in $\mathfrak{M}$ below a chosen threshold while preserving it above. Assume for this subsection that $\mathcal{M} \models \mathrm{ZFC} + \mathrm{GCH}$.

Fix an infinite cardinal $\kappa \in \mathbf{Card}^\mathcal{M}$. Define the class of \emph{$\kappa$-cut arbitrary cardinals} by

\[
\mathbf{Card}^{\mathfrak{M}}_{> \kappa}\ \coloneqq\ \{\delta(\eta) \in \mathbf{Card}^{\mathfrak{M}} \mid \eta \in \mathbf{Card}^\mathcal{M}\} \ \cup\ \{\eta \in \mathbf{Card}^{\mathfrak{M}} \mid \forall i \in M (\eta(i) > \kappa)\}.
\]

\noindent
Thus we keep \emph{all} constant cardinals, but we only allow \emph{non-constant} arbitrary cardinals whose values are always above $\kappa$.

\begin{proposition}\label{prop:no-mix-below}
Let $\tau = \delta(\eta)$ be a constant infinite cardinal with $\mathcal{M} \models \eta \leq \kappa$. Then there is no $\xi \in \mathbf{Card}^{\mathfrak{M}}_{>\kappa}$ such that

\[
\tau <_\Box \xi <_\Box 2^{\tau, \bullet}.
\]
\end{proposition}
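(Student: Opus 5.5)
The proof is a case split on the two ways $\xi$ can belong to $\mathbf{Card}^{\mathfrak{M}}_{>\kappa}$, combined with the pointwise description of $<_\Box$ from Subsection~\ref{subsec:crisp}. Suppose, towards a contradiction, that $\xi \in \mathbf{Card}^{\mathfrak{M}}_{>\kappa}$ satisfies $\tau <_\Box \xi <_\Box 2^{\tau,\bullet}$. By Proposition~\ref{prop:definable-pointwise}, $2^{\tau,\bullet}$ is the constant function $\delta\bigl((2^{\eta})^{\mathcal{M}}\bigr)$. Since $\mathcal{M}\models\mathrm{GCH}$ and $\eta$ is infinite, this equals $\delta(\eta^{+\mathcal{M}})$. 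Unfolding $<_\Box$ through Proposition~\ref{prop:fundamental}, the assumption says three things: $\eta \le \xi(i) \le \eta^{+}$ for every $i\in M$; there is some $i_1$ with $\eta<\xi(i_1)$; and there is some $i_2$ with $\xi(i_2)<\eta^{+}$.

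\emph{Case 1: $\xi=\delta(\zeta)$ is constant.} Then $\eta\le\zeta\le\eta^+$, and $\zeta$ is a cardinal of $\mathcal{M}$. By the existence of $i_1$ we get $\eta<\zeta$, and by the existence of $i_2$ we get $\zeta<\eta^+$. So $\eta<\zeta<\eta^+$ in $\mathcal{M}$. This contradicts the fact that $\eta^+$ is the least cardinal above $\eta$ in $\mathcal{M}$, a two-valued ZFC model.

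\emph{Case 2: $\xi(i)>\kappa$ for every $i$.} Since $\eta\le\kappa$, we have $\xi(i)>\eta$ for all $i$. Each $\xi(i)$ is a cardinal of $\mathcal{M}$, so $\xi(i)\ge\eta^{+}$. Combined with $\xi(i)\le\eta^+$, this gives $\xi(i)=\eta^+$ for every $i$, i.e.\ $\xi=\delta(\eta^+)=2^{\tau,\bullet}$. Then $2^{\tau,\bullet}\le_\Box\xi$ holds, contradicting the strictness part of $\xi<_\Box 2^{\tau,\bullet}$. (Equivalently, the required index $i_2$ cannot exist.)

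The main point needing care is reading $<_\Box$ correctly as ``everywhere $\le$ and somewhere $<$'' rather than as global strict inequality. The crux is Case 2: the cut forces every coordinate of $\xi$ past $\eta$, hence onto $\eta^+$ by GCH and the minimality of successor cardinals in $\mathcal{M}$. This leaves no room for mixing, and GCH is what lets us identify $2^{\tau,\bullet}$ with the successor.
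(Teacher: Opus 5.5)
Your proof is correct and follows the paper's argument. GCH identifies $2^{\tau,\bullet}$ with $\delta(\eta^{+})$, and splitting into the case of constant $\xi$ and the case of $\xi$ lying everywhere above $\kappa$ contradicts the pointwise reading of $<_\Box$ each time. You are slightly more precise than the paper in two places: you attribute the constant case to the minimality of $\eta^{+}$ rather than to GCH, and you conclude $\xi(i)=\eta^{+}$ directly instead of passing through $\kappa^{+}$.
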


\begin{proof}
Since $\mathcal{M} \models \mathrm{GCH}$ and $\eta$ is infinite, we have $2^\eta = \eta^+$. Hence $2^{\tau, \bullet} = \delta(\eta^+)$ is constant.

Suppose for a contradiction that $\tau <_\Box \xi <_\Box 2^{\tau, \bullet}$ for some $\xi \in \mathbf{Card}^{\mathfrak{M}}_{> \kappa}$. If $\xi$ is constant, then by GCH in $\mathcal{M}$ there is no constant cardinal strictly between $\eta$ and $\eta^+$, a contradiction. If $\xi$ is non-constant, then by definition of $\mathbf{Card}^{\mathfrak{M}}_{> \kappa}$ we have $\xi(i) > \kappa \geq \eta$ for all $i$, hence $\xi(i) \geq \kappa^+ \geq \eta^+$ for all $i \in M$. This contradicts $\xi <_\Box \delta(\eta^+)$.
\end{proof}

\begin{proposition}\label{prop:mix-above}
Let $\sigma \in \mathbf{Card}^{\mathfrak{M}}_{> \kappa}$ satisfy $\delta(\kappa) <_\Box \sigma$ (equivalently, $\sigma(i) > \kappa$ for all $i \in M$). Then there exists $\eta \in \mathbf{Card}^{\mathfrak{M}}_{> \kappa}$ such that

\[
\sigma <_\Box \eta <_\Box 2^{\sigma, \bullet}.
\]
\end{proposition}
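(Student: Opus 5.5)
The plan is to mix $\sigma$ with $2^{\sigma,\bullet}$ along a single state, exactly as in the proof of Theorem~\ref{theorem gch fails}, but now coordinatewise rather than between two constants. Fix $i_0\in M$ and define $\eta\in M^M$ by
\[
\eta(i)\coloneqq
\begin{cases}
\sigma(i),& i=i_0,\\
(2^{\sigma(i)})^{\mathcal{M}},& i\neq i_0.
\end{cases}
\]
The remaining work is to check three things: that $\eta$ lies in $\mathbf{Card}^{\mathfrak{M}}_{>\kappa}$, that $\sigma<_\Box\eta$, and that $\eta<_\Box 2^{\sigma,\bullet}$.

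First, every value $\eta(i)$ is a cardinal of $\mathcal{M}$, so $\eta\in\mathbf{Card}^{\mathfrak{M}}$ by the description of $\mathbf{Card}^{\mathfrak{M}}$ following the minimality proposition. Moreover, $\eta(i)\geq\sigma(i)>\kappa$ for every $i$. Hence $\eta$ belongs to the second clause of the definition of the cut, whether or not it happens to be constant.

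Second, I would verify the crisp comparisons using the pointwise characterisation of $<_\Box$ from Subsection~\ref{subsec:crisp}. Since each $\sigma(i)>\kappa$ is infinite, Cantor's theorem in $\mathcal{M}$ gives $\sigma(i)<2^{\sigma(i)}$. It follows that $\sigma(i)\leq\eta(i)\leq 2^{\sigma(i)}$ for every $i$. Strictness at some state comes from the construction itself:
\begin{itemize}
\item for $\sigma<_\Box\eta$, any $i\neq i_0$ gives $\sigma(i)<\eta(i)$, and such an $i$ exists because $|M|\geq 2$;
\item for $\eta<_\Box 2^{\sigma,\bullet}$, the state $i_0$ gives $\eta(i_0)=\sigma(i_0)<2^{\sigma(i_0)}$.
\end{itemize}
By Proposition~\ref{prop:definable-pointwise}, $2^{\sigma,\bullet}$ is the internal exponentiation, so the comparison is the intended one.

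There is no real obstacle here. The only points needing care are these:
\begin{itemize}
\item Cantor's strict inequality is needed, and it holds since $\mathcal{M}\models\mathrm{ZFC}$. GCH is not used; it only gives $2^{\sigma(i)}=\sigma(i)^+$.
\item $M$ must have at least two elements, which is automatic for a transitive model.
\item Membership in the cut is needed even if $\sigma$ is a constant $\delta(\eta_0)$, and the second clause covers this case.
\end{itemize}
One may also remark that $\eta$ is a crisp intermediate in the sense of Definition~\ref{def:crisp-between}, with $\llbracket\eta=\sigma\rrbracket=\{i_0\}$.
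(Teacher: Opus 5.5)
Your proposal is correct and is essentially the paper's own proof. It uses the same one-state mixture, $\eta(i_0)=\sigma(i_0)$ and $\eta(i)=2^{\sigma(i)}$ for $i\neq i_0$, gets membership in $\mathbf{Card}^{\mathfrak{M}}_{>\kappa}$ from $\eta(i)\geq\sigma(i)>\kappa$, and witnesses the two strict crisp inequalities at states $i\neq i_0$ and at $i_0$ respectively (your added remarks that GCH is not needed and that the second clause of the cut covers $\eta$ regardless of constancy are also accurate).
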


\begin{proof}
Pick any index $i_0 \in M$ and define $\eta \in \mathfrak{M}$ by

\[
\eta(i) \coloneqq \begin{cases}
\sigma(i) & \text{if } i = i_0,\\
2^{\sigma(i)} & \text{if } i \neq i_0
\end{cases}
\]

\noindent
Since $\sigma(i) > \kappa$ for all $i \in M$, also $\eta(i) > \kappa$ for all such $i$, so $\eta \in \mathbf{Card}^{\mathfrak{M}}_{> \kappa}$.

Pointwise, $\sigma(i) \leq \eta(i) \leq 2^{\sigma(i)}$ for all $i \in M$, so $\sigma \leq_\Box \eta \leq_\Box 2^{\sigma, \bullet}$. To see that the inequalities are strict in the order induced by $\leq_\Box$, note that $\eta \leq_\Box \sigma$ fails at every $i \neq i_0$ (since $2^{\sigma(i)} \not\leq \sigma(i)$ in $\mathcal{M}$), hence $\sigma <_\Box \eta$. Similarly, $2^{\sigma, \bullet} \leq_\Box \eta$ fails at $i_0$ (since $2^{\sigma(i_0)} \not\leq \sigma(i_0) = \eta(i_0)$), hence $\eta <_\Box 2^{\sigma, \bullet}$.
\end{proof}

The cut $\mathbf{Card}^{\mathfrak{M}}_{>\kappa}$ is not itself a set-theoretic universe, but it is a device for isolating a purely \emph{crisp} phenomenon. Internally, $\mathfrak{M}$ inherits whatever cardinal arithmetic (including GCH) holds in $\mathcal{M}$ by transfer. Crisply, coordinatewise mixing makes ``intermediate'' objects abundant, and the cut shows one way to suppress such mixing below a chosen threshold. In the next subsection we show how to implement the same idea at the level of \emph{all sets}: there is a filtered subdomain of $\mathfrak{M}$ which still satisfies ZFC and whose class of arbitrary cardinals is exactly $\mathbf{Card}^{\mathfrak{M}}_{> \kappa}$. Finally, in an ultrapower $\mathfrak{M} / \mathcal{U}$, mixing between a cardinal and its successor is suppressed automatically, since an ultrafilter selects one component of each comparison profile.

\subsection{A filtered ZFC submodel realizing the cut}\label{subsec:filtered-submodel}

The cut $\mathbf{Card}^{\mathfrak{M}}_{> \kappa}$ from the previous subsection is formulated at the level of \emph{cardinals} and is not, by itself, a set-theoretic universe. It is nevertheless natural to ask whether one can obtain a genuine set-theoretic environment---a subdomain of $\mathfrak{M}$ in which quantifiers range over \emph{all} (available) arbitrary sets---that suppresses mixing below $\kappa$ while preserving it above. In this subsection we describe a simple construction of such a filtered ZFC submodel.

Let $\mathfrak{N} \subseteq \mathfrak{M}$ and let us interpret formulas with parameters from $\mathfrak{N}$ exactly as in Definition \ref{def:truth-values}, except that in the clauses for $\exists$ and $\forall$ we range over $\mathfrak{N}$ rather than all of $\mathfrak{M}$:

\[
\llbracket \exists x \varphi(x) \rrbracket_{\mathfrak{N}} = \bigcup_{f \in \mathfrak{N}} \llbracket \varphi(f) \rrbracket_{\mathfrak{N}}, \mbox{ and } \llbracket \forall x \varphi(x) \rrbracket_{\mathfrak{N}} = \bigcap_{f \in \mathfrak{N}} \llbracket \varphi(f) \rrbracket_{\mathfrak{N}}.
\]

\noindent
Atomic formulas and Boolean connectives are interpreted as before. We write $\mathfrak{N} \models \varphi$ just in case $\llbracket \varphi \rrbracket_{\mathfrak{N}} = M$.

\begin{proposition}\label{prop:subdomain-fundamental}
Let $\mathfrak{N}$ be such that $\delta[\mathcal{M}] \subseteq \mathfrak{N} \subseteq \mathfrak{M}$. Then for every $\mathcal{L}_\in$-formula $\varphi(\vec{x})$ and every tuple $\vec{a} \in \mathfrak{N}^n$,

\[
\llbracket \varphi(\vec{a}) \rrbracket_{\mathfrak{N}} =\{i \in M \mid \mathcal{M} \models \varphi(\vec{a}(i))\,\}.
\]

\noindent
In particular, for parameters in $\mathfrak{N}$ the truth values computed in $\mathfrak{N}$ agree with those computed in $\mathfrak{M}$:

\[
\llbracket \varphi(\vec{a}) \rrbracket_{\mathfrak{N}} = \llbracket \varphi(\vec{a}) \rrbracket.
\]
\end{proposition}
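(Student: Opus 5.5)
We argue by induction on the complexity of $\varphi$, following the proof of Proposition~\ref{prop:fundamental}, but ranging quantifiers over $\mathfrak{N}$ and using the hypothesis $\delta[\mathcal{M}]\subseteq\mathfrak{N}$ to supply witnesses. The second displayed equality then follows by comparing with Proposition~\ref{prop:fundamental}, since both sides equal $\{i\in M\mid\mathcal{M}\models\varphi(\vec a(i))\}$.

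The atomic and Boolean cases are handled first. For atomic formulas, the clauses of $\llbracket\cdot\rrbracket_{\mathfrak{N}}$ coincide with those of Definition~\ref{def:truth-values}, so the identity is immediate. For $\neg,\wedge,\vee,\to$, the identity follows from the induction hypothesis, since the set operations $M\setminus\cdot$, $\cap$, $\cup$ commute with the pointwise description.

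The only real work is the quantifier case. For $\exists x\,\psi(x,\vec a)$ with $\vec a\in\mathfrak{N}$, the inclusion $\subseteq$ is easy. If $i\in\llbracket\psi(u,\vec a)\rrbracket_{\mathfrak{N}}$ for some $u\in\mathfrak{N}$, the induction hypothesis gives $\mathcal{M}\models\psi(u(i),\vec a(i))$, hence $\mathcal{M}\models\exists x\,\psi(x,\vec a(i))$. For $\supseteq$, suppose $\mathcal{M}\models\psi(m,\vec a(i))$ for some $m\in M$. In $\mathfrak{M}$ we could take any function with value $m$ at $i$, but in $\mathfrak{N}$ we only know that the constant function $\delta(m)$ is available. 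Fortunately it suffices: $\delta(m)\in\mathfrak{N}$ and $\delta(m)(i)=m$, so the induction hypothesis (applied with the parameter $\delta(m)\in\mathfrak{N}$) places $i$ in $\llbracket\psi(\delta(m),\vec a)\rrbracket_{\mathfrak{N}}$, and hence in the union. This step is where the hypothesis $\delta[\mathcal{M}]\subseteq\mathfrak{N}$ is essential, and it is the main point to verify. Note also that no choice is needed, because we witness one coordinate at a time rather than building a global witness.

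The universal case is dual. If $\mathcal{M}\models\forall x\,\psi(x,\vec a(i))$, then for every $u\in\mathfrak{N}$ we have $\mathcal{M}\models\psi(u(i),\vec a(i))$, so $i$ lies in every $\llbracket\psi(u,\vec a)\rrbracket_{\mathfrak{N}}$. Conversely, if $i$ lies in the intersection, then for each $m\in M$ we may take $u=\delta(m)$ to get $\mathcal{M}\models\psi(m,\vec a(i))$. Alternatively, the universal case follows from the existential one via $\forall=\neg\exists\neg$.
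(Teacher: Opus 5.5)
Your proposal is correct and is essentially the paper's proof: induction on $\varphi$ with the atomic and Boolean cases immediate. The quantifier cases rest on the fact that the constant functions $\delta(m)\in\mathfrak{N}$ realize every $m\in M$ at each coordinate $i$, and the second equality follows by comparison with Proposition~\ref{prop:fundamental}.
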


\begin{proof}
The proof is by induction on the complexity of $\varphi$, exactly as in Proposition \ref{prop:fundamental}. Atomic and Boolean cases are immediate. For quantifiers, consider $\exists x \psi(x, \vec{a})$. By definition,

\[
\llbracket \exists x \psi(x, \vec{a}) \rrbracket_{\mathfrak{N}} = \bigcup_{f \in \mathfrak{N}} \llbracket \psi(f, \vec{a}) \rrbracket_{\mathfrak{N}}.
\]

\noindent
By the induction hypothesis, $i$ lies in the union iff there exists $f \in \mathfrak{N}$ such that $\mathcal{M} \models \psi(f(i), \vec{a}(i))$. Since $\delta[\mathcal{M}] \subseteq \mathfrak{N}$, for every $m \in \mathcal{M}$ the constant function $\delta(m)$ lies in $\mathfrak{N}$ and satisfies $\delta(m)(i) = m$. Hence, for each fixed $i$, the set $\{f(i) \mid f \in \mathfrak{N}\}$ contains all of $\mathcal{M}$, and therefore

\[
(\exists f \in \mathfrak{N})\ \mathcal{M} \models \psi(f(i), \vec{a}(i)) \quad \Longleftrightarrow \quad (\exists m \in \mathcal{M})\ \mathcal{M} \models \psi(m, \vec{a}(i)).
\]

\noindent
The right-hand side is exactly $\mathcal{M} \models \exists x \psi(x, \vec{a}(i))$. Thus $i$ belongs to $\llbracket \exists x \psi(x,\ \vec{a}) \rrbracket_{\mathfrak{N}}$ iff $\mathcal{M} \models \exists x \psi(x, \vec{a}(i))$, proving the existential case. The universal case is analogous, using intersections and again the fact that $\delta[\mathcal{M}] \subseteq \mathfrak{N}$ supplies all constant test values.
\end{proof}

\begin{corollary}\label{cor:subdomain-zfc}
If $\mathcal{M} \models \mathrm{ZFC}$ and $\delta[\mathcal{M}] \subseteq \mathfrak{N} \subseteq \mathfrak{M}$, then $\mathfrak{N} \models \mathrm{ZFC}$.
\end{corollary}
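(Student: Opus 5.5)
The plan mirrors the proof of Corollary~\ref{cor:zfc-transfer}, with Proposition~\ref{prop:subdomain-fundamental} in place of Proposition~\ref{prop:fundamental}.

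Fix an axiom of ZFC, or an instance of one of the schemes (Separation, Replacement/Collection). View it as a sentence $\varphi$ of $\mathcal{L}_\in$ with no free variables. Any parameters of a scheme instance are bound by outer universal quantifiers, so no parameters from $\mathfrak{N}$ enter. The hypothesis $\delta[\mathcal{M}]\subseteq\mathfrak{N}\subseteq\mathfrak{M}$ is exactly what Proposition~\ref{prop:subdomain-fundamental} requires. Applying it with the empty tuple gives
\[
\llbracket\varphi\rrbracket_{\mathfrak{N}}=\{i\in M\mid \mathcal{M}\models\varphi\}.
\]
Since $\mathcal{M}\models\mathrm{ZFC}$, the condition on $i$ does not depend on $i$ and holds outright. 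Hence the set is all of $M$, and $\mathfrak{N}\models\varphi$ by the definition of satisfaction for subdomains.

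The only point needing care is the use of the sentence form of the schemes. Proposition~\ref{prop:subdomain-fundamental} handles the universal closure over $\mathfrak{N}$ through its quantifier clause. That clause relies on the constant functions $\delta(m)$ lying in $\mathfrak{N}$, so that for each state $i$ the values $\{f(i)\mid f\in\mathfrak{N}\}$ exhaust $M$. Here the inclusion $\delta[\mathcal{M}]\subseteq\mathfrak{N}$ is essential.

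If one prefers to read the schemes parametrically, the argument still works. For a formula $\psi(\vec{x})$ and any $\vec{a}\in\mathfrak{N}$, $\mathcal{M}\models\psi(\vec{a}(i))$ holds at every $i$ because $\mathcal{M}$ satisfies the universal closure. Proposition~\ref{prop:subdomain-fundamental} then again gives truth value $M$.

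I expect no real obstacle, since the corollary is immediate once the subdomain version of the fundamental lemma is available. The only subtlety worth stating is that, despite the truth values being computed in $\mathfrak{N}$, witnesses for existential axioms need not be constructed explicitly inside $\mathfrak{N}$. The supervaluational clause $\llbracket\exists x\,\psi\rrbracket_{\mathfrak{N}}=\bigcup_{f\in\mathfrak{N}}\llbracket\psi(f)\rrbracket_{\mathfrak{N}}$ only asks for a witness at each state separately, and the constant functions provide such witnesses.
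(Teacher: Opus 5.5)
Your proof is correct and is essentially the paper's argument: you treat each axiom or scheme instance as a sentence, apply Proposition~\ref{prop:subdomain-fundamental} to get $\llbracket\varphi\rrbracket_{\mathfrak{N}}=\{i\in M\mid\mathcal{M}\models\varphi\}=M$, and conclude $\mathfrak{N}\models\varphi$. Your extra remarks on the parametric reading and on constant functions supplying statewise witnesses are accurate, but that work is already contained in the proposition.
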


\begin{proof}
Let $\varphi$ be any axiom (or instance of a scheme) of ZFC. Then $\varphi$ is a sentence, so Proposition \ref{prop:subdomain-fundamental} yields

\[
\llbracket \varphi \rrbracket_{\mathfrak{N}} = \{i \in M \mid \mathcal{M} \models \varphi\} = M.
\]

\noindent
Hence $\mathfrak{N} \models \varphi$.
\end{proof}

Now, fix an infinite cardinal $\kappa \in \mathbf{Card}^\mathcal{M}$. Recall from our observations of the hierarchy of $\mathfrak{M}$ in Section \ref{sec:two} that $\mathcal{M}_\alpha$ denotes the $\alpha$th stage of the cumulative hierarchy inside $\mathcal{M}$.

\begin{definition}\label{def:kappa-filtered}
Define

\[
\mathfrak{M}^{\kappa \text{-cut}} \ \coloneqq\ \delta[\mathcal{M}]\ \cup\ \{f \in \mathfrak{M} \mid\ \forall i \in M (f(i) \not\in \mathcal{M}_{\kappa+1})\}.
\]

\noindent
Thus every constant arbitrary set is included, and a non-constant arbitrary set is included only if its value is above the rank cut $\mathcal{M}_{\kappa+1}$ in every state.
\end{definition}

\begin{proposition}\label{prop:kappa-cut-zfc}
If $\mathcal{M} \models \mathrm{ZFC}$ then $\mathfrak{M}^{\kappa \text{-cut}} \models \mathrm{ZFC}$.
\end{proposition}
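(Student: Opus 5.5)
This is an immediate application of Corollary~\ref{cor:subdomain-zfc}. That corollary requires only that the subdomain lie between $\delta[\mathcal{M}]$ and $\mathfrak{M}$; no closure property of the subdomain is needed. So the plan is simply to verify the sandwich condition
\[
\delta[\mathcal{M}] \subseteq \mathfrak{M}^{\kappa\text{-cut}} \subseteq \mathfrak{M}.
\]

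The first inclusion holds by Definition~\ref{def:kappa-filtered}, since $\delta[\mathcal{M}]$ is one of the two sets whose union defines $\mathfrak{M}^{\kappa\text{-cut}}$. The second inclusion holds because both pieces consist of total functions $M\to M$: $\delta(a)\in M^M$ for every $a\in M$, and the other piece is by definition a subset of $\mathfrak{M}$.

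With the sandwich in place, Corollary~\ref{cor:subdomain-zfc} applies with $\mathfrak{N}=\mathfrak{M}^{\kappa\text{-cut}}$. Behind it, Proposition~\ref{prop:subdomain-fundamental} gives, for every formula and parameters from $\mathfrak{N}$,
\[
\llbracket \varphi(\vec a)\rrbracket_{\mathfrak{N}}=\{i\in M\mid \mathcal{M}\models\varphi(\vec a(i))\}.
\]
Hence every ZFC axiom or scheme instance, being a sentence true in $\mathcal{M}$, receives truth value $M$ in $\mathfrak{N}$. This is exactly $\mathfrak{M}^{\kappa\text{-cut}}\models\mathrm{ZFC}$.

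There is no genuine obstacle; the substantive work was done in Proposition~\ref{prop:subdomain-fundamental}. Its quantifier step is the only delicate point: witnesses are needed at each state separately, and the constant functions supply them. It is worth remarking that $\mathfrak{N}$ need not be closed under coordinatewise witness selection. That is why the filtering, which removes non-constant functions with low-rank values, does not damage the transfer.
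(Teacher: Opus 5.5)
Your proposal is correct and is exactly the paper's argument: check the sandwich $\delta[\mathcal{M}] \subseteq \mathfrak{M}^{\kappa\text{-cut}} \subseteq \mathfrak{M}$ from Definition~\ref{def:kappa-filtered} and apply Corollary~\ref{cor:subdomain-zfc}. Your remark that no mixing or closure property is needed is also accurate, because the quantifier clauses take unions and intersections over the subdomain, and the constant functions already realize every element of $M$ at each state.
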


\begin{proof}
We have $\delta[M] \subseteq \mathfrak{M}^{\kappa \text{-cut}} \subseteq \mathfrak{M}$, so the claim follows from Corollary \ref{cor:subdomain-zfc}.
\end{proof}

\begin{proposition}\label{prop:rigidity-below-cut}
Let $f \in \mathfrak{M}^{\kappa \text{-cut}}$. If there is $i_0 \in M$ such that $f(i_0) \in \mathcal{M}_{\kappa+1}$, then $f = \delta(a)$ for some $a \in \mathcal{M}$. Equivalently: no non-constant arbitrary set in $\mathfrak{M}^{\kappa \text{-cut}}$ can take any value in $\mathcal{M}_{\kappa+1}$ in any state.
\end{proposition}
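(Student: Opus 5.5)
This follows directly from Definition~\ref{def:kappa-filtered} by unpacking the two-part union that defines $\mathfrak{M}^{\kappa\text{-cut}}$. Let $f \in \mathfrak{M}^{\kappa\text{-cut}}$ and suppose $f(i_0) \in \mathcal{M}_{\kappa+1}$ for some $i_0 \in M$. By the definition, either $f \in \delta[\mathcal{M}]$, or $f$ lies in the second component, so that $f(i) \notin \mathcal{M}_{\kappa+1}$ for every $i \in M$. The second alternative is refuted by the single state $i_0$. Hence $f \in \delta[\mathcal{M}]$, i.e.\ $f = \delta(a)$ for some $a \in M$.

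For the \emph{equivalently} clause, I will show that the two formulations are contrapositives of one another. Suppose first that the main statement holds, and let $f \in \mathfrak{M}^{\kappa\text{-cut}}$ be non-constant. If $f$ took a value in $\mathcal{M}_{\kappa+1}$ at some state, the statement would make $f$ constant, which is a contradiction. Conversely, suppose no non-constant member of $\mathfrak{M}^{\kappa\text{-cut}}$ takes a value in $\mathcal{M}_{\kappa+1}$. Then any member that does take such a value must be constant, i.e.\ of the form $\delta(a)$.

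I will also add a remark that $a = f(i_0)$ necessarily, since $\delta(a)(i_0) = a$. Consequently $a \in \mathcal{M}_{\kappa+1}$, and the rigid objects below the cut are exactly the constants $\delta(a)$ with $a \in \mathcal{M}_{\kappa+1}$. This is the set-level analogue of Proposition~\ref{prop:no-mix-below}.

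There is no genuine obstacle here; the argument is pure definition-chasing. The only point needing care is the reading of \emph{non-constant} in Definition~\ref{def:kappa-filtered}. Membership in the second component of the union does not itself exclude constant functions: a constant $\delta(b)$ with $b \notin \mathcal{M}_{\kappa+1}$ belongs to both components. This overlap is harmless, because the argument only uses the dichotomy ``in $\delta[\mathcal{M}]$, or avoiding $\mathcal{M}_{\kappa+1}$ everywhere,'' and never needs the two components to be disjoint.
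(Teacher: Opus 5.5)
Your proof is correct and is the same as the paper's. Both observe that the single state $i_0$ rules out membership in the second component of the union in Definition~\ref{def:kappa-filtered}, which forces $f\in\delta[\mathcal{M}]$. Your additional remarks are accurate details that the paper leaves implicit: the contrapositive reading of the ``equivalently'' clause, the identification $a=f(i_0)\in\mathcal{M}_{\kappa+1}$, and the harmless overlap of the two components.
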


\begin{proof}
By Definition \ref{def:kappa-filtered}, every non-constant element $f$ of $\mathfrak{M}^{\kappa \text{-cut}}$ is such that $\forall i \in M (f(i) \not\in \mathcal{M}_{\kappa+1})$. Thus if $f(i_0) \in \mathcal{M}_{\kappa+1}$ for some $i_0$, then $f$ cannot be in the second part of the union  and must therefore lie in $\delta[\mathcal{M}]$.
\end{proof}

\begin{corollary}\label{cor:no-low-mixing}
Let $f \in \mathbf{On}^{\mathfrak{M}^{\kappa \text{-cut}}}$. If $f \leq_\Box \delta(\kappa)$ (equivalently, $f(i) \leq \kappa$ for all $i$), then $f$ is constant. Similarly, if $\eta \in \mathrm{Card}^{\mathfrak{M}^{\kappa \text{-cut}}}$ and $\eta \leq_\Box \delta(\kappa)$, then $\eta$ is constant.
\end{corollary}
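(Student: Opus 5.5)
The plan is to reduce Corollary~\ref{cor:no-low-mixing} to Proposition~\ref{prop:rigidity-below-cut}. To apply it, we only need one state $i_0$ at which the value lies in $\mathcal{M}_{\kappa+1}$. In fact we will show that \emph{every} value lies there.

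First, unpack the hypothesis. Since $\delta[\mathcal{M}] \subseteq \mathfrak{M}^{\kappa\text{-cut}} \subseteq \mathfrak{M}$, Proposition~\ref{prop:subdomain-fundamental} says that truth values computed in $\mathfrak{M}^{\kappa\text{-cut}}$ agree with the pointwise ones. Two consequences follow:
\begin{itemize}
\item $f \in \mathbf{On}^{\mathfrak{M}^{\kappa\text{-cut}}}$ means that $f(i)$ is an ordinal of $\mathcal{M}$ for every $i \in M$;
\item $f \leq_\Box \delta(\kappa)$ means that $\mathcal{M} \models f(i) \leq \kappa$ for every $i \in M$.
\end{itemize}
The second consequence is the parenthetical equivalence in the statement.

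Next, compute ranks in the ground model $\mathcal{M}$. For an ordinal $\alpha$ we have $\operatorname{rank}(\alpha) = \alpha$, and $x \in \mathcal{M}_\beta$ iff $\operatorname{rank}(x) < \beta$. So every ordinal $\alpha \leq \kappa$ satisfies $\alpha \in \mathcal{M}_{\alpha+1} \subseteq \mathcal{M}_{\kappa+1}$, using that the hierarchy is increasing. Hence $f(i) \in \mathcal{M}_{\kappa+1}$ for every $i$. Fixing any $i_0 \in M$ (which is nonempty), Proposition~\ref{prop:rigidity-below-cut} gives $f = \delta(a)$ for some $a \in M$, so $f$ is constant.

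The cardinal clause is the same argument. An element $\eta \in \mathrm{Card}^{\mathfrak{M}^{\kappa\text{-cut}}}$ is in particular an arbitrary ordinal of $\mathfrak{M}^{\kappa\text{-cut}}$, because being a cardinal implies being an ordinal in ZFC and this transfers pointwise by Proposition~\ref{prop:subdomain-fundamental}. So the ordinal case applies directly to $\eta$.

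The only point needing care is the rank computation $\alpha \in \mathcal{M}_{\alpha+1}$ for ordinals $\alpha$. This is a standard ZF fact, proved by induction on $\alpha$ inside $\mathcal{M}$: each $\beta<\alpha$ lies in $\mathcal{M}_{\beta+1}\subseteq\mathcal{M}_\alpha$, so $\alpha \subseteq \mathcal{M}_\alpha$ and hence $\alpha \in \mathcal{P}^{\mathcal{M}}(\mathcal{M}_\alpha) = \mathcal{M}_{\alpha+1}$. Since $\mathcal{M}$ is transitive, this membership is absolute. Everything else is bookkeeping with the definition of the cut.
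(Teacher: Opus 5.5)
Your proposal is correct and matches the paper's proof: you read the hypothesis pointwise, note that ordinals $\le\kappa$ lie in $\mathcal{M}_{\kappa+1}$ because an ordinal's rank is itself, apply Proposition~\ref{prop:rigidity-below-cut}, and handle cardinals as a special case of ordinals. Your added proof that $\alpha\in\mathcal{M}_{\alpha+1}$ and your use of Proposition~\ref{prop:subdomain-fundamental} to compute truth in the cut model are only small refinements of the paper's argument.
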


\begin{proof}
Assume $f \leq_\Box \delta(\kappa)$. By Proposition \ref{prop:fundamental}, this means that for every $i \in M$ we have $\mathcal{M} \models f(i) \leq \kappa$. Since each $f(i)$ is an ordinal of $\mathcal{M}$, $f(i) \leq \kappa$ implies $f(i) \in \mathcal{M}_{\kappa+1}$ (since ordinals have rank equal to themselves). Therefore, $f(i) \in \mathcal{M}_{\kappa+1}$ for all $i$, and Proposition \ref{prop:rigidity-below-cut} yields that $f$ is constant. The cardinal case is identical, since cardinals are ordinals.
\end{proof}

\begin{proposition}\label{prop:cardinals-filtered}
\[
\mathbf{Card}^{\mathfrak{M}^{\kappa \text{-cut}}} =\mathbf{Card}^{\mathfrak{M}} \cap \mathfrak{M}^{\kappa \text{-cut}} = \mathbf{Card}^{\mathfrak{M}}_{>\kappa}.
\]
\end{proposition}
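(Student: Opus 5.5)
The plan is to prove the two equalities separately. The first is a transfer argument via Proposition~\ref{prop:subdomain-fundamental}. The second is a direct comparison of Definition~\ref{def:kappa-filtered} with the definition of $\mathbf{Card}^{\mathfrak{M}}_{>\kappa}$, using the rank of ordinals.

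\emph{First equality.} I take $\mathbf{Card}^{\mathfrak{M}^{\kappa\text{-cut}}}$ to mean the set of $f \in \mathfrak{M}^{\kappa\text{-cut}}$ such that $\mathfrak{M}^{\kappa\text{-cut}} \models \mathrm{Card}(f)$, where $\mathrm{Card}(x)$ is the usual $\mathcal{L}_\in$-formula.
\begin{itemize}
\item Since $\delta[\mathcal{M}] \subseteq \mathfrak{M}^{\kappa\text{-cut}} \subseteq \mathfrak{M}$, Proposition~\ref{prop:subdomain-fundamental} gives $\llbracket \mathrm{Card}(f)\rrbracket_{\mathfrak{M}^{\kappa\text{-cut}}} = \llbracket \mathrm{Card}(f)\rrbracket$ for every $f$ in the cut.
\item Hence $f$ is a cardinal of the cut model iff $f(i) \in \mathbf{Card}^{\mathcal{M}}$ for all $i \in M$.
\item By the earlier description of $\mathbf{Card}^{\mathfrak{M}}$, this holds iff $f \in \mathbf{Card}^{\mathfrak{M}}$.
\end{itemize}
This yields $\mathbf{Card}^{\mathfrak{M}^{\kappa\text{-cut}}} = \mathbf{Card}^{\mathfrak{M}} \cap \mathfrak{M}^{\kappa\text{-cut}}$.

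\emph{Second equality.} The key auxiliary fact is that for an ordinal $\alpha$ of $\mathcal{M}$,
\[
\alpha \in \mathcal{M}_{\kappa+1} \iff \alpha \leq \kappa .
\]
This holds because $\mathcal{M}_{\kappa+1} \cap \mathbf{On} = \kappa+1$, by the standard induction showing $\mathcal{M}_\beta \cap \mathbf{On} = \beta$. The same fact was already used in Corollary~\ref{cor:no-low-mixing}. Consequently, for a cardinal value $\eta(i)$, the condition $\eta(i) \notin \mathcal{M}_{\kappa+1}$ is equivalent to $\eta(i) > \kappa$.

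For ($\subseteq$), let $\eta \in \mathbf{Card}^{\mathfrak{M}} \cap \mathfrak{M}^{\kappa\text{-cut}}$.
\begin{itemize}
\item If $\eta \in \delta[\mathcal{M}]$, then $\eta = \delta(\eta_0)$. Evaluating at any coordinate shows $\eta_0 \in \mathbf{Card}^{\mathcal{M}}$, so $\eta$ lies in the first part of $\mathbf{Card}^{\mathfrak{M}}_{>\kappa}$.
\item Otherwise $\eta(i) \notin \mathcal{M}_{\kappa+1}$ for all $i$. By the auxiliary fact, $\eta(i) > \kappa$ for all $i$, so $\eta$ lies in the second part.
\end{itemize}

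For ($\supseteq$), let $\eta \in \mathbf{Card}^{\mathfrak{M}}_{>\kappa}$.
\begin{itemize}
\item Constant cardinals $\delta(\eta_0)$ lie in $\delta[\mathcal{M}] \subseteq \mathfrak{M}^{\kappa\text{-cut}}$, and they are in $\mathbf{Card}^{\mathfrak{M}}$.
\item If instead $\eta(i) > \kappa$ for every $i$, the auxiliary fact gives $\eta(i) \notin \mathcal{M}_{\kappa+1}$ for every $i$. So $\eta$ is in the second part of Definition~\ref{def:kappa-filtered}. Since $\eta \in \mathbf{Card}^{\mathfrak{M}}$ by hypothesis, the inclusion follows.
\end{itemize}

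\emph{Expected obstacle.} The only point needing care is the auxiliary rank fact, $\mathcal{M}_{\kappa+1} \cap \mathbf{On} = \kappa+1$, together with transitivity of $\mathcal{M}$ so that ordinals of $\mathcal{M}$ are true ordinals. I would cite it as standard. The remaining steps are direct unwinding of definitions.
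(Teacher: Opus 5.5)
Your proposal is correct and follows essentially the paper's argument: split into constant and non-constant elements, and use that an ordinal of $\mathcal{M}$ lies outside $\mathcal{M}_{\kappa+1}$ exactly when it exceeds $\kappa$. The one addition is that you explicitly derive the first equality from Proposition~\ref{prop:subdomain-fundamental}, whereas the paper leaves that step implicit and only argues $\mathbf{Card}^{\mathfrak{M}} \cap \mathfrak{M}^{\kappa\text{-cut}} = \mathbf{Card}^{\mathfrak{M}}_{>\kappa}$.
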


\begin{proof}
We show that an arbitrary cardinal $\eta \in \mathbf{Card}^{\mathfrak{M}}$ lies in $\mathfrak{M}^{\kappa \text{-cut}}$ iff it lies in $\mathbf{Card}^{\mathfrak{M}}_{> \kappa}$.

If $\eta$ is constant, then $\eta = \delta(\xi)$ for some $\xi \in \mathbf{Card}^\mathcal{M}$, so $\eta \in \delta[\mathcal{M}] \subseteq \mathfrak{M}^{\kappa \text{-cut}}$ and also $\eta \in \mathbf{Card}^{\mathfrak M}_{> \kappa}$ by definition. If $\eta$ is non-constant and $\eta \in \mathfrak{M}^{\kappa \text{-cut}}$, then by Definition \ref{def:kappa-filtered} we have $\eta(i) \not\in \mathcal{M}_{\kappa+1}$ for all $i \in M$. Since each $\eta(i)$ is an ordinal, $\eta(i) \not\in \mathcal{M}_{\kappa+1}$ implies $\eta(i) \geq \kappa+1$, hence $\eta(i) > \kappa$, for all $i \in M$. Thus $\eta \in \mathbf{Card}^{\mathfrak{M}}_{> \kappa}$. Conversely, if $\eta \in \mathbf{Card}^{\mathfrak{M}}_{> \kappa}$ is non-constant, then $\eta(i) > \kappa$ for all $i$, so in particular $\eta(i) \not\in \mathcal{M}_{\kappa+1}$ for all $i$. Hence $\eta \in \mathfrak{M}^{\kappa \text{-cut}}$.
\end{proof}

\begin{corollary}\label{cor:localized-mixing}
Assume $\mathcal{M} \models \mathrm{ZFC} + \mathrm{GCH}$, and let $\mathfrak{M}^{\kappa \text{-cut}}$ be as in \emph{Definition \ref{def:kappa-filtered}}. Then the conclusions of \emph{Propositions \ref{prop:no-mix-below}} and \emph{\ref{prop:mix-above}} hold already inside the genuine $\mathrm{ZFC}$ environment $\mathfrak{M}^{\kappa \text{-cut}}$.
\end{corollary}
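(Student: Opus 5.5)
The plan is to reduce both propositions to statements about the cardinals of the filtered model. Once the relevant notions are computed inside $\mathfrak{M}^{\kappa\text{-cut}}$, the proofs already given can be reused almost verbatim.

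First, I will check that the notions used in Propositions~\ref{prop:no-mix-below} and~\ref{prop:mix-above} mean the same thing whether evaluated in $\mathfrak{M}$ or in $\mathfrak{M}^{\kappa\text{-cut}}$. These notions are: being an arbitrary cardinal, the crisp order $\leq_\Box$ and its strict part $<_\Box$, and the pointwise exponent $2^{\sigma,\bullet}$. Since $\delta[\mathcal{M}]\subseteq\mathfrak{M}^{\kappa\text{-cut}}\subseteq\mathfrak{M}$, Proposition~\ref{prop:subdomain-fundamental} shows that for parameters in $\mathfrak{M}^{\kappa\text{-cut}}$ every truth value computed in the submodel equals the pointwise set $\{i\mid \mathcal{M}\models\varphi(\vec a(i))\}$. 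Hence $\mathfrak{M}^{\kappa\text{-cut}}\models\mathrm{Card}(\eta)$ iff each $\eta(i)$ is a ground cardinal, and $\leq_\Box$, $<_\Box$ have the same pointwise characterization as in Subsection~\ref{subsec:crisp}. By Proposition~\ref{prop:kappa-cut-zfc}, $\mathfrak{M}^{\kappa\text{-cut}}$ is a genuine ZFC environment. Combining these facts with Proposition~\ref{prop:cardinals-filtered}, the cardinals of the submodel are exactly $\mathbf{Card}^{\mathfrak{M}}_{>\kappa}$.

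Next I will handle $2^{\sigma,\bullet}$, which I expect to be the main obstacle. It must actually be an element of $\mathfrak{M}^{\kappa\text{-cut}}$, and it must be the internal exponent there. By Proposition~\ref{prop:definable-pointwise} it is computed coordinatewise, so the only issue is membership in the filtered domain. There are two cases.
\begin{itemize}
\item For Proposition~\ref{prop:no-mix-below}, $\tau=\delta(\eta)$ is constant, and GCH gives $2^{\tau,\bullet}=\delta(\eta^+)\in\delta[\mathcal{M}]$.
\item For Proposition~\ref{prop:mix-above}, either $\sigma$ is non-constant, so $\sigma(i)>\kappa$ everywhere by the definition of the cut, or $\sigma$ is constant with value above $\kappa$ by the hypothesis $\delta(\kappa)<_\Box\sigma$. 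In either case $2^{\sigma(i)}>\kappa$ for all $i$, hence $2^{\sigma(i)}\notin\mathcal{M}_{\kappa+1}$, so $2^{\sigma,\bullet}$ lies in the cut.
\end{itemize}
In both cases, Proposition~\ref{prop:subdomain-fundamental} identifies this function with the internal $2^{\sigma}$ of the submodel, because the defining formula holds pointwise.

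Finally I will rerun the two proofs inside the submodel. For Proposition~\ref{prop:no-mix-below}, any candidate $\xi$ is a cardinal of $\mathfrak{M}^{\kappa\text{-cut}}$, hence lies in $\mathbf{Card}^{\mathfrak{M}}_{>\kappa}$. The original argument then applies unchanged: a constant $\xi$ would give a ground cardinal strictly between $\eta$ and $\eta^+$, while a non-constant $\xi$ has $\xi(i)\geq\kappa^+\geq\eta^+$ everywhere. For Proposition~\ref{prop:mix-above}, the mixed function $\eta$ takes the values $\sigma(i_0)$ and $2^{\sigma(i)}$, all above $\kappa$. So $\eta$ belongs to the cut, and it is a cardinal there by Proposition~\ref{prop:cardinals-filtered}. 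The pointwise strictness verification is then verbatim, since $<_\Box$ is evaluated identically in both models.
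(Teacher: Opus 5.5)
Your proposal is correct and takes essentially the paper's route: the paper's proof cites Proposition~\ref{prop:cardinals-filtered} to identify the cardinals of $\mathfrak{M}^{\kappa\text{-cut}}$ with $\mathbf{Card}^{\mathfrak{M}}_{>\kappa}$, then reuses Propositions~\ref{prop:no-mix-below} and~\ref{prop:mix-above} verbatim, leaving implicit (via Proposition~\ref{prop:subdomain-fundamental}) that the crisp relations are unchanged. Your extra check that $2^{\sigma,\bullet}$ itself belongs to the cut is a worthwhile step the paper omits: the cut is not closed under pointwise mixing (for instance, the pointwise cardinality of a non-constant high-rank set may take two small values), so the internal truth of $\forall x\,\exists y\,(y=2^{x})$ there does not by itself supply a single witnessing element.
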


\begin{proof}
By Proposition \ref{prop:cardinals-filtered}, the class of arbitrary cardinals available in $\mathfrak{M}^{\kappa \text{-cut}}$ is exactly $\mathbf{Card}^{\mathfrak{M}}_{> \kappa}$. Thus Propositions \ref{prop:no-mix-below} and \ref{prop:mix-above}, which are formulated purely in terms of $\mathbf{Card}^{\mathfrak{M}}_{> \kappa}$ and the crisp comparison relations, apply verbatim.
\end{proof}

\subsection{Arbitrary sets and measurable cardinals}
\label{sec:arbitrary-sets-measurability}

We conclude this section discussing the connection between arbitrary sets and large cardinals.\footnote{We thank an anonymous referee for suggesting to look at models where the corresponding $\mathbf{Ord}$  is a measurable cardinal.} For this subsection, let $\kappa>\omega$ be a cardinal and put $M=V_\kappa$. 
The arbitrary sets have the same domain as before:
\[
\mathfrak{M} = M^M = \{f \mid f : M \longrightarrow M\}.
\]
In particular, every arbitrary set is represented by a total function on $M$. Consider the arbitrary hierarchy object
\[
\widehat H:M\longrightarrow M,
\qquad
\widehat H(x)=V_{\operatorname{rank}(x)}.
\]
This function is well defined: if $x\in V_\kappa$, then $\operatorname{rank}(x)<\kappa$, and $V_\alpha\in V_\kappa$ for every $\alpha<\kappa$. Its values are transitive and have no uniform rank bound below $\kappa$, since
\[
\widehat H(\alpha)=V_\alpha
\qquad(\alpha<\kappa).
\]
We ask whether a two-valued reduction can realize this varying hierarchy as an ordinary set. Towards this aim, let
\[
S=\{V_\alpha:\alpha<\kappa\}\subseteq M,
\qquad
s:\kappa\longrightarrow M,
\qquad s(\alpha)=V_\alpha.
\]
The function $s$ indexes the hierarchy states. Since $\operatorname{rank}(V_\alpha)=\alpha$, we have
\[
\widehat H(V_\alpha)=V_\alpha
\qquad(\alpha<\kappa).
\]
Thus $\widehat H$ agrees with the general arbitrary set $\operatorname{id}_M$ on $S$. The function
\[
H:=\widehat H\circ s:\kappa\longrightarrow M,
\qquad H(\alpha)=V_\alpha,
\]
is thus a useful auxiliary representation of $\widehat H$ and since it belongs to $M^\kappa$. 

As usual, for an ultrafilter $\mathcal{U}$ on $M$, write
\[
f =_\mathcal{U} g
\quad \Longleftrightarrow\quad
\{x \in M : f(x) = g(x) \} \in \mathcal{U},
\]
and
\[
f \in_\mathcal{U} g
\quad \Longleftrightarrow\quad
\{x \in M : f(x) \in g(x)\} \in \mathcal{U}.
\]
Moreover, recall that $\delta_M(a)$ is the constant function on $M$ with value $a$. The following proposition states the recovery condition directly in the $M$-indexed model.

\begin{proposition}
Let $\kappa > \omega$ be a cardinal and $M = V_\kappa$. Then $\kappa$ is measurable if and only if there is an ultrafilter $\mathcal{U}$ on $M$ such that $S \in \mathcal{U}$ and, for every $f \in M^M$,

\begin{equation}
f \in_\mathcal{U} \widehat{H}
\quad \Longleftrightarrow\quad
\exists a \in M\; f =_\mathcal{U} \delta_M(a).
\label{eq:hierarchy-recovery}
\end{equation}

\noindent
Any such $\mathcal{U}$ is nonprincipal and $\kappa$-complete. Its induced ultrafilter on $\kappa$ is normal. The ultrapower $\mathcal{M}^M / \mathcal{U}$ is well founded, and its transitive collapse $\pi_\mathcal{U}: \mathcal{M}^M / \mathcal{U} \longrightarrow \mathcal{N}$ satisfies
\[
\pi_\mathcal{U}([\widehat{H}]_\mathcal{U}) = \pi_\mathcal{U}([\operatorname{id}_M]_\mathcal{U}) = M,
\qquad
\pi_\mathcal{U}([r]_\mathcal{U}) = \kappa,
\]
where $r(x)=\operatorname{rank}(x)$. Moreover,
\[
\mathcal{M} \prec \mathcal{N}
\qquad\text{and}\qquad
M \in N.
\]
\end{proposition}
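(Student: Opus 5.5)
The plan is to handle both directions through the bijection $s:\kappa\to S$, so that ultrafilters on $M$ concentrating on $S$ correspond exactly to ultrafilters on $\kappa$ via $D_{\mathcal U}=\{X\subseteq\kappa : s[X]\in\mathcal U\}$. Under this correspondence, the condition $f\in_{\mathcal U}\widehat H$ becomes $\{\alpha<\kappa: f(V_\alpha)\in V_\alpha\}\in D_{\mathcal U}$, using $\widehat H(V_\alpha)=V_\alpha$. Likewise, $f=_{\mathcal U}\delta_M(a)$ becomes $\{\alpha: f(V_\alpha)=a\}\in D_{\mathcal U}$. So the recovery condition says that every $f$ which is "$V_\alpha$-bounded" at $D$-almost every $\alpha$ is $D$-almost constant. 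This is a Fodor/normality property read through the rank map.

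For the forward direction, assume $\kappa$ is measurable and fix a normal $\kappa$-complete nonprincipal ultrafilter $D$ on $\kappa$. Push it forward to $\mathcal U=\{A\subseteq M: s^{-1}[A]\in D\}$, so $S\in\mathcal U$.
\begin{itemize}
\item Suppose $f\in_{\mathcal U}\widehat H$. Then $g(\alpha)=\operatorname{rank}(f(V_\alpha))<\alpha$ on a $D$-set. Normality makes $g$ constant, say $\beta$, on a $D$-set. On that set $f(V_\alpha)\in V_{\beta+1}$, and $|V_{\beta+1}|<\kappa$, so $\kappa$-completeness gives a single value $a$ with $f=_{\mathcal U}\delta_M(a)$.
\item Conversely, take a constant $\delta_M(a)$. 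Then $a\in V_\alpha$ for all $\alpha>\operatorname{rank}(a)$. This is a tail, hence in $D$ because $D$ is nonprincipal and $\kappa$-complete. Hence $\delta_M(a)\in_{\mathcal U}\widehat H$.
\end{itemize}

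For the reverse direction, start from $\mathcal U$ satisfying the recovery condition and let $D=D_{\mathcal U}$.
\begin{itemize}
\item Nonprincipality: if $\mathcal U$ were principal at $x$, every $f$ would be $=_{\mathcal U}$ to $\delta_M(f(x))$. The condition would then force $f(x)\in V_{\operatorname{rank}(x)}$ for every $f$, which fails for $f=\operatorname{id}_M$.
\item Normality: given $g$ regressive on a $D$-set, put $f(V_\alpha)=g(\alpha)$ and $f=\varnothing$ elsewhere. Then $f\in_{\mathcal U}\widehat H$, so $f$ is $\mathcal U$-almost constant, and hence $g$ is $D$-almost constant.
\item $\kappa$-completeness: this is the step I expect to be the main obstacle. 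I would first show that $D$ contains all tails. Suppose instead that some bounded set lies in $D$, and let $\beta$ be least with $\beta\in D$; nonprincipality forces $\beta$ to be a limit. I would then build a regressive function on a $D$-set below $\beta$ that has small fibres, for instance by coding $\alpha$ into a lower rank, to contradict normality. Once $D$ is uniform, the standard argument applies: a partition of $\kappa$ into $\lambda<\kappa$ pieces defines a function into $\lambda$ that is regressive on the tail above $\lambda$. Normality then puts one piece in $D$.
\end{itemize}
Since a $\kappa$-complete nonprincipal ultrafilter on $\kappa$ exists, $\kappa$ is measurable. Pulling back shows that $\mathcal U$ itself is $\kappa$-complete and nonprincipal.

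For the ultrapower clauses, proceed as follows.
\begin{itemize}
\item Well-foundedness follows from $\omega_1$-completeness.
\item Theorem~\ref{thm:los} together with Theorem~\ref{thm:diagonal} makes $j=\pi_{\mathcal U}\circ[\delta_M(\cdot)]_{\mathcal U}$ elementary from $\mathcal M$ into $\mathcal N$.
\item By induction on rank, using $\kappa$-completeness, $j(a)=a$ for every $a\in V_\kappa$. Thus $j\upharpoonright M$ is the inclusion, giving $\mathcal M\prec\mathcal N$.
\item $\widehat H$ and $\operatorname{id}_M$ agree on $S\in\mathcal U$, so $[\widehat H]_{\mathcal U}=[\operatorname{id}_M]_{\mathcal U}$. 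By the recovery condition, the $\in_{\mathcal U}$-members of $[\widehat H]_{\mathcal U}$ are exactly the classes $[\delta_M(a)]_{\mathcal U}$. Hence $\pi_{\mathcal U}([\widehat H]_{\mathcal U})=\{j(a): a\in M\}=M$, and in particular $M\in N$.
\item Finally, $r\circ s=\mathrm{id}_\kappa$ represents $\kappa$ in the ultrapower by the normal measure $D$. This gives $\pi_{\mathcal U}([r]_{\mathcal U})=\kappa$.
\end{itemize}
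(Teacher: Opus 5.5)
Your overall architecture matches the paper's: transport along $s$, the forward direction via a normal measure and the rank argument, the reverse direction by extracting normality and completeness, then identifying the collapse. However, the $\kappa$-completeness step has a genuine gap.

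You propose to show that $D$ contains all tails by taking the least $\beta$ with $\beta\in D$ and contradicting normality together with nonprincipality. This cannot work in general. Suppose some $\beta<\kappa$ is measurable with normal measure $D_0$, and put $D=\{X\subseteq\kappa : X\cap\beta\in D_0\}$, pushed to $M$ along $s$.
\begin{itemize}
\item The result is a nonprincipal ultrafilter containing $S$.
\item In it, every function regressive on a $D$-set is constant on a $D$-set.
\item It even satisfies the left-to-right half of the recovery condition. The rank of $f(V_\alpha)$ is regressive, hence constant $=\gamma$ on a $D_0$-set. Since $|V_{\gamma+1}|<\beta$, $\beta$-completeness then yields a single value.
\end{itemize}
Yet this $D$ concentrates on $\beta$. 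So your minimal-$\beta$ strategy can only succeed when $\beta$ is not measurable. When $\beta$ is measurable, no regressive function with small fibres exists, and neither normality nor the left-to-right half of the condition can give uniformity.

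The missing idea is to use the right-to-left half of the recovery condition on constants. You already did this for nonprincipality, but it should be applied systematically. For each $\beta<\kappa$, the right-hand side holds trivially for $\delta_M(\beta)$, so $\{x\in M : \beta\in\widehat H(x)\}\in\mathcal U$. Intersecting with $S$ gives $s[\{\alpha<\kappa : \beta<\alpha\}]\in\mathcal U$, so every tail lies in $D$. This is exactly how the paper obtains uniformity, and nonprincipality comes with it.

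With this step supplied, your partition argument yields $\kappa$-completeness: a function into $\lambda<\kappa$ is regressive on the tail above $\lambda$, hence constant on a $D$-set. The rest of your proposal, including normality and the ultrapower and collapse clauses, is correct.
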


\begin{proof}
We first relate the two index sets. Given an ultrafilter $\mathcal{W}$ on $\kappa$, define an ultrafilter $\mathcal{U}$ on $M$ by
\[
B \in \mathcal{U}
\quad \Longleftrightarrow\quad
\{\alpha < \kappa: V_\alpha \in B\} \in \mathcal{W}.
\]
Then $S\in \mathcal{U}$. Conversely, if $\mathcal{U}$ is an ultrafilter on $M$ with $S\in \mathcal{U}$, define
\[
A \in \mathcal{W}
\quad \Longleftrightarrow\quad
s[A] \in \mathcal{U}
\qquad (A\subseteq\kappa).
\]
Since $s$ is a bijection from $\kappa$ onto $S$, the map $A \mapsto s[A]$ is a Boolean algebra isomorphism from $\mathcal P(\kappa)$ onto $\mathcal P(S)$. Since $S\in \mathcal{U}$, the restriction of $\mathcal{U}$ to $\mathcal P(S)$ is an ultrafilter, and its inverse image under this isomorphism is precisely $\mathcal{W}$. Thus $\mathcal{W}$ is an ultrafilter on $\kappa$. Moreover, for $A \subseteq \kappa$ and $B \subseteq M$,
\[
s^{-1}[s[A]] = A,
\qquad
s[s^{-1}[B]] = B \cap S.
\]
Since $S\in \mathcal{U}$, we have $B\in \mathcal{U}$ if and only if $B \cap S \in \mathcal{U}$. These identities show that the two constructions are inverse to one another.

Now, the restriction to the hierarchy states induces an isomorphism
\[
\Phi : \mathcal{M}^M / \mathcal{U} \longrightarrow
 \mathcal{M}^\kappa / \mathcal{W},
\qquad
\Phi([f]_\mathcal{U})=[f\circ s]_\mathcal{W}.
\]
Indeed, equality and membership correspond by the definition of $\mathcal{U}$. Every function $g: \kappa \to M$ is obtained as $f \circ s$ for a total function $f: M \to M$: set $f(V_\alpha) = g(\alpha)$ and $f(x) = \varnothing$ outside $S$. Hence $\Phi$ is surjective as well as injective. Writing $\delta_\kappa(a)$ for the constant function on $\kappa$, condition \eqref{eq:hierarchy-recovery} is therefore equivalent to

\begin{equation}
g \in_\mathcal{W} H
\quad \Longleftrightarrow\quad
\exists a \in M\;g =_\mathcal{W} \delta_\kappa(a)
\qquad (g: \kappa \to M).
\label{eq:auxiliary-hierarchy-recovery}
\end{equation}

Suppose first that $\kappa$ is measurable, and choose a normal measure $\mathcal{W}$ on $\kappa$. If $g \in_\mathcal{W} H$, then $g(\alpha) \in V_\alpha$ on a set in $\mathcal{W}$. On that set the function $\alpha \mapsto \operatorname{rank}(g(\alpha))$ is regressive. Normality makes this rank equal to a fixed $\beta < \kappa$ on a set in $\mathcal{W}$. The corresponding values of $g$ lie in $V_{\beta+1}$. Measurability implies inaccessibility, so $|V_{\beta+1}| < \kappa$. By $\kappa$-completeness, one of these values is assumed on a set in $\mathcal{W}$. Thus $g =_\mathcal{W} \delta_\kappa(a)$ for some $a \in M$. Conversely, every $a\in M$ belongs to $V_\alpha$ for all sufficiently large $\alpha < \kappa$. Hence $g =_\mathcal{W} \delta_\kappa(a)$ implies $g \in_\mathcal{W} H$. This proves \eqref{eq:auxiliary-hierarchy-recovery}, and therefore \eqref{eq:hierarchy-recovery} for the induced $\mathcal{U}$.

Now suppose \eqref{eq:auxiliary-hierarchy-recovery} holds for an ultrafilter $\mathcal{W}$ on $\kappa$. Apply it to $\delta_\kappa(\beta)$ for each $\beta<\kappa$. Since the right-hand side holds, the tail
\[
\{\alpha < \kappa \mid \beta < \alpha\}
\]
belongs to $\mathcal{W}$. In particular, $\mathcal{W}$ is nonprincipal.

For every nonzero ordinal $\lambda < \kappa$ and every function $g:\kappa \to \lambda$, we have $g \in_\mathcal{W} H$. Indeed, whenever $\alpha > \lambda$, the value $g(\alpha)$ is an ordinal below $\lambda$, and hence
\[
\operatorname{rank}(g(\alpha))
= g(\alpha) < \lambda < \alpha.
\]
Thus $g(\alpha) \in V_\alpha = H(\alpha)$. Since the tail $\{\alpha < \kappa \mid \alpha > \lambda\}$ belongs to $\mathcal{W}$, upward closure gives $g \in_\mathcal{W} H$. The recovery condition therefore yields some $a \in M$ such that
\[
\{\alpha < \kappa \mid g(\alpha) = a\}\in \mathcal{W}.
\]
This set is nonempty, so $a < \lambda$. Consequently, every function from $\kappa$ to an ordinal below $\kappa$ has a fibre belonging to $\mathcal{W}$.

This is enough to show $\kappa$-completeness. Indeed, suppose, towards a contradiction, that $0 < \lambda < \kappa$ and $\langle A_\xi \mid \xi < \lambda \rangle$ is a family of members of $\mathcal{W}$ whose intersection does not belong to $\mathcal{W}$. Since $\mathcal{W}$ is an ultrafilter,
\[
D = \kappa \setminus \bigcap_{\xi < \lambda} A_\xi \in \mathcal{W}.
\]
Every $\alpha \in D$ fails to belong to at least one $A_\xi$. Define a total function $g: \kappa \to \lambda$ by
\[
g(\alpha)=
\begin{cases}
\min \{\xi < \lambda \mid \alpha \notin A_\xi\}, & \alpha \in D,\\
0, & \alpha \notin D.
\end{cases}
\]
For each $\xi < \lambda$, let
\[
F_\xi = \{\alpha < \kappa \mid g(\alpha) = \xi\}.
\]
If $\alpha \in F_\xi \cap D$, then the definition of $g$ gives $\alpha \notin A_\xi$. Therefore
\[
F_\xi \cap D \cap A_\xi=\varnothing.
\]
If $F_\xi$ belonged to $\mathcal{W}$, then $D, A_\xi \in \mathcal{W}$ and closure under finite intersections would imply $\varnothing \in \mathcal{W}$, a contradiction. Thus no fibre of $g$ belongs to $\mathcal{W}$. This contradicts the conclusion of the preceding paragraph. Hence every intersection of fewer than $\kappa$ members of $\mathcal{W}$ belongs to $\mathcal{W}$, proving that $\mathcal{W}$ is $\kappa$-complete.

Finally, if $g: \kappa \to \kappa$ is regressive on a set in $\mathcal{W}$, then $g(\alpha) \in V_\alpha$ on that set. Condition \eqref{eq:auxiliary-hierarchy-recovery} makes $g$ constant modulo $\mathcal{W}$. Thus $\mathcal{W}$ is normal, and $\kappa$ is measurable. Completeness and non-principality transfer to $\mathcal{U}$ along $s$.

Countable completeness gives well-foundedness of the ultrapowers. Let $\pi_\mathcal{W}: \mathcal{M}^\kappa / \mathcal{W} \to \mathcal{N}$ be the transitive collapse, and put $\pi_\mathcal{U} = \pi_\mathcal{W} \circ \Phi$. We now identify their constant classes. Since $\kappa$ is measurable, every $a \in V_\kappa$ has cardinality less than $\kappa$. If $g \in_\mathcal{W} \delta_\kappa(a)$, $\kappa$-completeness makes $g$ equal modulo $\mathcal{W}$ to a constant with value in $a$. Consequently, the members of $[\delta_\kappa(a)]_\mathcal{W}$ are exactly the constant classes $[\delta_\kappa(b)]_\mathcal{W}$ with $b \in a$. Induction on rank gives
\[
\pi_\mathcal{W}([\delta_\kappa(a)]_\mathcal{W})=a
\qquad (a \in M).
\]
By \L o\'{s}'s theorem, the constant embedding is elementary, and after collapse it is the inclusion $\mathcal{M} \prec \mathcal{N}$.

Condition \eqref{eq:auxiliary-hierarchy-recovery} now says that the members of $[H]_\mathcal{W}$ are exactly the constant classes associated with the elements of $M$. Hence
\[
\pi_\mathcal{W}([H]_\mathcal{W}) = M.
\]
Also, the members of $[\operatorname{id}_\kappa]_\mathcal{W}$ are exactly the constant ordinal classes below $\kappa$: every constant ordinal below $\kappa$ is a member modulo $\mathcal{W}$, and every function that is a member modulo $\mathcal{W}$ is regressive there and thus constant by normality. Therefore
\[
\pi_\mathcal{W}([\operatorname{id}_\kappa]_\mathcal{W}) = \kappa.
\]
Finally,
\[
\widehat H \circ s = \operatorname{id}_M \circ s = H,
\qquad
r \circ s = \operatorname{id}_\kappa.
\]
Transporting the two collapse identities through $\Phi$ proves the claims about $\widehat{H}$, $\operatorname{id}_M$, and $r$. Since $[H]_\mathcal{W}$ is an element of the ultrapower, its collapsed image $M$ is an element of $N$.
\end{proof}

The initial definitions require only that $\kappa > \omega$ be a cardinal. Whenever the recovery condition holds, the proposition implies that $\kappa$ is measurable and hence inaccessible, so $V_\kappa \models \mathrm{ZFC}$. 

The philosophical interest lies in the passage between two ways of being one over many. An arbitrary set brings objects together as the possible values of a single varying object; an ordinary set brings objects together as its members. The general arbitrary set $\operatorname{id}_M$ has every object of $M$ as a possible value. The measure concentrates on the hierarchy states, where $\operatorname{id}_M$ and $\widehat{H}$ agree, and the recovery condition ensures that the members of their common realization are precisely the original objects of $\mathcal{M}$. Their membership relations are preserved by the elementary inclusion into $\mathcal{N}$.

Thus the source universe becomes one object in a larger elementary structure:
\[
\mathcal{M} \prec \mathcal{N},
\qquad M \in N.
\]
Its varying rank is realized as the height of that universe, since $\pi_\mathcal{U}([r]_W)=\kappa$. Although $M$ was already a set in the metatheory, the reduction realizes inside $\mathcal{N}$ the totality that served as the domain of the source model. Measurability characterizes this specific recovery requirement, with concentration on the hierarchy states.

\section{From combinatorial to arbitrary}\label{sec:four}

This final section sketches a philosophical interpretation of the preceding constructions. Our aim is to connect the mathematics of Sections \ref{sec:one}--\ref{sec:three} to two familiar motifs in the philosophy of set theory: (i) the \emph{iterative} conception of set formation, and (ii) the \emph{combinatorial} (as opposed to purely logical) conception of collections. 

The guiding thought is that the function-based models developed
here admit a combinatorial interpretation in terms of independent variation across a fixed state space. Their connection with the iterative conception lies in the stratification of arbitrary sets whose values are uniformly bounded in the ground hierarchy.

It is helpful to keep in view the contrast between Sections \ref{sec:one} and \ref{sec:two}. Section \ref{sec:one} models arbitrary sets of a ground ZF(C) model $\mathcal{M}$ by using the direct power $\mathcal{M}^M$. This yields an attractive intensional picture, but it fails Extensionality and (more decisively) the full Separation scheme. Section \ref{sec:two} changes the semantic strategy: instead of extending membership alone, we interpret both $\in$ and $=$ via truth values in $\mathcal{P}(M)$. The result is a ZF(C) model in a non-stardard semantic environment. The present section focuses on what \emph{conception of set} this non-classical environment suggests.

A standard contrast in the philosophical literature distinguishes a \emph{logical} conception of collection from a \emph{combinatorial} conception (see, e.g., \citep{Incurvati2020,Parsons1974}). On a logical conception, a collection is determined by a defining condition and therefore sets are conceived as extensions of predicates, or as what satisfies a given formula. On a combinatorial conception, by contrast, the emphasis is on \emph{membership as primitive data}, thus, a set is determined by which objects are in it, independently of any description.

One central role of the combinatorial conception is to motivate the prominence of the power-set operation in the iterative hierarchy. If we think of a set $X$ as given, then $\mathcal P(X)$ can be understood as the totality of \emph{all possible ways of selecting members} of $X$. For finite $X$ this is the familiar $2^{|X|}$ counting argument. But the same idea is often invoked in the infinite case, where the intuition is that forming $\mathcal P(X)$ corresponds to making, for each $x\in X$, an independent ``include/exclude'' decision, and then collecting the results. A related picture appears already in Bernays' discussion of sets of integers and sequences as outcomes of indefinitely many independent determinations \citep{BenacerrafPutnam1983}.

A useful way of describing the same process is using \emph{characteristic functions}.  Indeed, subsets of $X$ are in bijection with functions $\chi:X\to 2$, where $2=\{0,1\}$. Thus
\[
\mathcal P(X)\ \cong\ 2^X.
\]
From this perspective, power-set formation is already a kind of \emph{function-space} construction: it is the space of all \emph{binary assignments} to positions indexed by $X$.

This observation will matter for our purposes because it lets us state, very cleanly, what changes when we move from classical ``combinations of members'' to our function-based ``arbitrary sets.'' The difference is not that we abandon combinatorics in favor of something else; rather, we generalize from $2$-valued assignments to \emph{many-valued} assignments, and we relocate the positions from being indexed by $X$ itself to being indexed by a \emph{state space}.

The informal core of the arbitrary-object idea is that an object may be \emph{state-dependent}: it takes a value at each state, and its scope is the collection of values it may assume. In this case, the state space is an index set $I$ and values live in a fixed background model $\mathcal{M}$.

\begin{definition}\label{def:state-scope}
Let $\Omega$ be a nonempty set (the state space), and let $X$ be a set of admissible values. An $\Omega$-indexed arbitrary object with values in $X$ is represented by a function $u:\Omega\to X$.
\end{definition}

This is a bare representation, but it captures the intended structure: the value $u(i)$ is what the arbitrary object assumes at state $i$, and the set $\{u(i) \mid i \in \Omega\}$ is its scope in the ordinary sense.

From the viewpoint of Definition \ref{def:state-scope}, the characteristic-function picture of $\mathcal P(X)$ becomes a special case, since $\mathcal P(X)\cong 2^X$ and therefore it becomes the space of $X$-indexed arbitrary objects whose values are binary. In other words, the classical combinatorial operation $X\mapsto\mathcal P(X)$ can be seen as an ``arbitrary-object'' operator of the form $X\mapsto 2^X$.

What differs then in considering arbitrary sets is that we fix the state space and vary the set of admissible values. We thereby obtain a complementary generalization of the assignment picture: instead of binary assignments to positions indexed by $X$, we consider $X$-valued assignments to positions indexed by $\Omega$.


\subsection{The arbitrary-object operator and the iterative analogy}

Fix a state space $\Omega$. Define an operator on sets of admissible values by

\[
\mathsf{A}_\Omega(X)\ \coloneqq\ \prescript{\Omega}{}{X},
\]

\noindent
the collection of all $\Omega$-indexed arbitrary objects with values in $X$. One may regard $\mathsf{A}_\Omega$ as the \emph{arbitrary-object operator} associated to $\Omega$.

It is useful to contrast two exponentiation patterns:

\begin{itemize}[leftmargin=2em]
\item In the classical power-set picture, we start from $X$ and form $\mathcal{P}(X) \cong 2^X$. Here the \emph{index set of positions varies} (it is $X$), while the codomain of values is fixed (it is $2$). 
\item In the arbitrary-object picture pursued in this paper, we start from $X$ and form $\mathsf{A}_\Omega(X) = \prescript{\Omega}{}{X}$. Here the \emph{index set of positions is fixed} (it is $\Omega$), while the codomain varies (it is $X$).
\end{itemize}

Both constructions are combinatorial in the same broad sense: they collect all possible assignments obtained by independent determinations at each position. But they implement different notions of ``position''. For $\mathcal{P}(X) \cong 2^X$, positions correspond to \emph{potential members} of $X$. For $\mathsf{A}_\Omega(X) = \prescript{\Omega}{}{X}$, positions correspond to \emph{states}.

This is exactly the shift that we are suggesting. Instead of thinking of a new set as formed by recombining members, we think of a new arbitrary set as formed by allowing a given value-domain to vary across states. Let's analyze this contrast in the finite case. If $|\Omega| = n$ and $|X| = m$, then $|\mathcal P(X)| = 2^m$ while $|\prescript{\Omega}{}{X}| = m^n$. Even when $\Omega$ and $X$ are of the same size, the two ways of ``counting combinations'' represent different generative ideas.

If the state space $\Omega$ carries an order, then $f : \Omega \to X$ can be seen as an $\Omega$-long \emph{list} of elements of $X$, repetitions allowed. Thus the move from $\mathcal{P}(X)$ to $X^\Omega$ can be described informally as a move from ``unordered combinations'' to ``ordered assignments.'' However, beside the order, the mathematical point is that what matters is that the function-space construction preserves \emph{positional information} about which values occur at which states.


This therefore represent a different  combinatorial view of sets, because not only we preserve a positional component, but this same aspect is what allows repetitions of values. In this sense this became a more inclusive, or better more permissive, combinatorial conception of sets. Moreover, its combinatorial essence is what motivate to include all possible functions from a state space $\Omega$ into is value space $X$, thus justifying the existence of elements that transcend the standard stratification of sets in stages produced by the powerset axiom.

As a consequence this proposal differs from the standard iterative account, as recently detailed in \citep{Button2025}. There, functions are introduced in stages, with both their arguments and their values drawn from previously available functions. On the other hand, here, the state space and the ground model are given in advance, and functions represent ways for an object to vary over that background. Interestingly enough, these two combinatorial conceptions are connected by strong set theoretical principles, mainly measurability, that allow to collapse a collection of arbitrary sets into a collection of standard sets that belong to an initial segment of a cumulative hierarchy of the height of a measurable cardinal.

\subsection{Functional stratification}

Sections \ref{sec:two}--\ref{sec:three} implement Definition \ref{def:state-scope} in a particularly systematic way. We consider the domain of $\mathfrak{M} = \mathcal{M}^M$ of all arbitrary sets with state space indexed by $M$ with values in $\mathcal{M}$. We then equip $\mathfrak{M}$ with the $\mathcal{P}(M)$-valued semantics of Definition \ref{def:truth-values}.




A central feature of the construction is its natural external stratification, inherited from the background hierarchy of $\mathcal{M}$. For each $\alpha\in M\cap\mathbf{On}$, the internal level $\mathfrak{M}_\alpha=\delta(\mathcal{M}_\alpha)$ is a single arbitrary set. Its globally true members form the external collection
\[
V_\alpha^{\mathfrak{M}}
=
\bigl\{
f\in\prescript{M}{}{M}
\mid
\forall i\in M\,\bigl(f(i)\in\mathcal{M}_\alpha\bigr)
\bigr\}.
\]
Thus, the parameter $\alpha$ bounds the complexity of the admissible values, while arbitrary variation across states is already permitted at each level. For example, $V_2^{\mathfrak{M}}$ consists of all functions $M\to\{\varnothing,\{\varnothing\}\}$ and therefore has external cardinality $2^{|M|}$. This is compatible with the internal assertion that $\mathfrak{M}_2=\delta(2)$ has exactly two elements: the external collection of globally true members and the internal arbitrary set are different objects. As the limit-stage example in Section \ref{subsection cumulative hierarchy} shows, these external levels are increasing but need not be continuous at limits: membership in a limit level need not supply a uniform bound below that limit.


This suggests a two-dimensional picture. The ordinary iterative hierarchy is naturally read as a single increasing scale of complexity. Here we have at least two interacting dimensions:

\begin{enumerate}[label=(\roman*),leftmargin=2.5em]
\item the complexity of \emph{values} (controlled by $\alpha$), and \item the entire  \emph{variation} at every state (built into the exponentiation).
\end{enumerate}

\noindent
The first dimension mirrors the classical iterative conception. The second dimension is the new ingredient: it models the idea that an arbitrary set is determined not only by what values are possible, but by the entire pattern of how values may be distributed across states.

This helps clarify the sense in which the construction is ``combinatorial.'' To specify an element $f \in \prescript{\Omega}{}{X}$ is to make, for each $i \in \Omega$, an independent determination of a value in $X$. In the limit, this realizes a maximal version of the assignment picture: \emph{all} statewise determinations are admitted.

The scope of this stratification must nevertheless be qualified. Writing $\theta=M\cap\mathbf{On}$ for the ordinal height of the ground model, we have
\[
\bigcup_{\alpha<\theta}V_\alpha^{\mathfrak{M}}
\subsetneq \prescript{M}{}{M}.
\]
Indeed, $\mathrm{id}_M$ belongs to the full function space, but for every $\alpha<\theta$, $\mathrm{id}_M(\alpha)=\alpha\notin\mathcal{M}_\alpha$. Thus it belongs to none of these external levels. The full domain is obtained by forming the function space externally, with the entire ground model already given. Equivalently, \(\mathrm{id}_M\) is obtained by assigning to each singleton state \(\{i\}\) the constant value \(i\). This uses values from throughout $M$ and presupposes their availability. Consequently, the bounded stratification does not supply an iteration generating the unbounded arbitrary sets.

\subsection{Three further questions}

The functional and combinatorial picture raises several natural questions. We close by flagging three that are suggested directly by the constructions above.

\medskip

\noindent
\textbf{(1) Filtered domains and degrees of arbitrariness.}
Section \ref{sec:three} stratifies arbitrariness by imposing a cut on the class of arbitrary cardinals, and Subsection \ref{subsec:filtered-submodel} shows that the same idea can be realized at the level of \emph{all} sets by passing to a filtered subdomain $\mathfrak{M}^{\kappa \text{-cut}} \subseteq \mathfrak{M}$ which still satisfies ZFC. This opens a wider family of questions: how sensitive are the resulting ``degrees of arbitrariness'' to the choice of threshold and, more importantly, to the choice of what counts as ``low''? For instance, one may replace the rank cut $\mathcal{M}_{\kappa+1}$ with a \emph{size} cut (e.g.\ requiring non-constant objects never to take values whose transitive closure has cardinality $< \kappa$ in $\mathcal{M}$), or one may demand stronger closure conditions on the filtered domain. A systematic study of such filtrations would clarify how profiles, crisp comparability, and the availability of intermediate objects depend on these structural choices.

\medskip

\noindent \textbf{(2) Interaction with extensionality and intensionality.} Section \ref{sec:one} shows that a naive extension of membership generates an intensional universe but breaks full Separation and Extensionality. Section \ref{sec:two} restores ZFC internally by moving to a truth-value semantics in which equality and membership are treated uniformly. There are two paths of philosophical interest worth pursuing here: first, to understand whether there is a canonical way to recover \textit{intensional} distinctions (of the sort that motivated the box-membership construction) \textit{within} the $\mathcal{P}(M)$-valued setting—for example, by treating profiles or other semantic invariants as part of the identity conditions for arbitrary sets; second, to develop a theory of intensional sets, along the lines of \citep{Myhill} and \citep{Goodman}, which adequately frames our intensional reasoning with collections.

\medskip

\noindent
\textbf{(3) Arbitrary objects and large cardinals.}
Section \ref{sec:arbitrary-sets-measurability} shows an interesting connection between arbitrary objects and measurable cardinals. In essence, the existence of a measurable cardinal is a set theoretical principle that allows to view an unbounded arbitrary object like $\operatorname{id}_M$ as a regular set, in a larger model. This observation then can be seen as a sort of bridge principles between large cardinals and arbitrary objects and, therefore, it opens up the possibility of investigation this relationship from both perspectives. On the one hand we could ask which other large cardinals we can express in terms of arbitrary objects, while on the other hand we could ask to which large cardinals properties correspond basic tenets of arbitrary objects.



\bigskip

We leave these questions for future work, but they suggest a broader point: arbitrary sets offer an alternative way of organizing collections, including infinite ones, while retaining important connections with ordinary set theory. These connections emerge at several levels: the combinatorial conception provides philosophical motivation, the cumulative hierarchy stratifies the complexity of their possible values, and large-cardinal principles illuminate the conditions under which unbounded arbitrary objects can be seen as regular sets from a larger perspective.

\end{document}